\documentclass[11pt]{article}

\usepackage{epsfig,epsf,fancybox}
\usepackage{amsmath}
\usepackage{mathrsfs}
\usepackage{amssymb}
\usepackage{graphicx}
\usepackage{color}
\usepackage{multirow}
\usepackage{paralist}
\usepackage{verbatim}
\usepackage{galois}
\usepackage{algorithm}
\usepackage{algorithmic}
\usepackage{boxedminipage}
\usepackage{booktabs}
\usepackage{accents}
\usepackage{stmaryrd}
\usepackage{subfig}

\textheight 8.5truein
\parskip 0.1in
\topmargin 0.25in
\headheight 0in
\headsep 0in
\textwidth 6.8truein
\oddsidemargin  0in
\evensidemargin 0in
\parindent0pt

\newtheorem{theorem}{Theorem}[section]

\newtheorem{lemma}[theorem]{Lemma}

\newtheorem{definition}{Definition}[section]

\newtheorem{assumption}[theorem]{Assumption}

\,
\,

\newcommand{\x}{\mathbf x}
\newcommand{\y}{\mathbf y}
\newcommand{\s}{\mathbf s}
\newcommand{\sa}{\mathbf a}
\newcommand{\g}{\mathbf g}
\newcommand{\e}{\mathbf e}
\newcommand{\z}{\mathbf z}
\newcommand{\w}{\mathbf w}
\newcommand{\argmin}{\mathop{\rm argmin}}

\newcommand{\KCal}{\mathcal{K}}

\newcommand{\SCal}{\mathcal{S}}

\newcommand{\prox}{\textnormal{prox}}

\newcommand{\dom}{\textbf{dom}}

\newcommand{\br}{\mathbb{R}}

\newcommand{\ba}{\begin{array}}
\newcommand{\ea}{\end{array}}

\newcommand{\etal}{{\it et al.\ }}

%\title{A Two-Phase Accelerated Newton Method's with Adaptive Cubic Regularization and Its Variants}
%\title{A Unified Adaptive Accelerating Framework for First-Order and Second-Order Methods on Convex Minimization}
%\title{ A Unified Adaptive Acceleration Framework for Gradient and Newton Approaches to Convex Optimization }
%\title{A Unified Approach to Adaptively Accelerated Gradient and Newton-type Methods for Convex Optimization }
\title{A Unified Scheme to Accelerate Adaptive  Cubic Regularization and Gradient Methods for Convex Optimization }

\author{
	Bo JIANG
	\thanks{Research Center for Management Science and Data Analytics, School of Information Management and Engineering, Shanghai University of Finance and Economics, Shanghai 200433, China. Email: isyebojiang@gmail.com. } \and
	Tianyi LIN
	\thanks{Department of Industrial Engineering and Operations Research, UC Berkeley, Berkeley, CA 94720, USA. Email: darren\_lin@berkeley.edu} \and
	Shuzhong ZHANG
	\thanks{Department of Industrial and Systems Engineering, University of Minnesota, Minneapolis, MN 55455, USA. Email: zhangs@umn.edu.}}

\begin{document}
	\maketitle
	
	\begin{abstract}
		In this paper, we propose a unified two-phase scheme to accelerate any high-order regularized tensor approximation approach on the smooth part of a composite convex optimization model. The proposed scheme has the advantage of not needing to assume any prior knowledge of the Lipschitz constants for the gradient, the Hessian and/or high-order derivatives.
		This is achieved by tuning the parameters used in the algorithm \textit{adaptively} in its process of progression, which has been successfully incorporated in high-order nonconvex optimization \cite{CartisGouldToint2018, Birgin-Gardenghi-Martinez-Santos-Toint-2017}. By adopting similar
		approximate measure of the subproblem in \cite{Birgin-Gardenghi-Martinez-Santos-Toint-2017} for {\it nonconvex optimization}, we establish the overall iteration complexity bounds for three specific algorithms to obtain an $\epsilon$-optimal solution for composite convex problems. In general, we show that the adaptive high-order method has an iteration bound of $O\left( 1 / \epsilon^{1/(p+1)} \right)$ if the first $p$-th order derivative information is used in the approximation, which has the same iteration complexity as in %matches that of the nonadaptive version in
		\cite{Baes-2009, Nesterov-2018} where the Lipschitz constants are assumed to be known and the subproblems are assumed to be solved exactly. Thus, our results { partially address the problem of incorporating adaptive strategies into the high-order {\it accelerated} methods raised by Nesterov in \cite{Nesterov-2018}, although our strategies cannot assure the convexity of the auxiliary problem and such adaptive} strategies are already popular in high-order nonconvex optimization \cite{CartisGouldToint2018, Birgin-Gardenghi-Martinez-Santos-Toint-2017}.
		Specifically, we show that the gradient method achieves an iteration complexity in the order of $O\left( 1 / \epsilon^{1/2} \right)$, which is known to be best possible (cf.~\cite{Nesterov-2013-Introductory}), while the adaptive cubic regularization methods with the exact/inexact Hessian matrix both achieve an iteration complexity in the order of $O\left( 1 / \epsilon^{1/3} \right)$, which matches that of the original accelerated cubic regularization method presented in \cite{Nesterov-2008-Accelerating}.
		%assuming the availability of the exact Hessian information and the Lipschitz constants, and the global solution of the sub-problems.
		Our numerical experiment results show %demonstrate
		a clear effect of %the real
		acceleration displayed in the adaptive Newton's method with cubic regularization
		% and the adaptive gradient method
		% on the task of regularized logistic regression with most of real data sets.
		on a set of regularized logistic regression instances. 
	\end{abstract}
	
	\vspace{0.25cm}
	
	\noindent {\bf Keywords:} convex optimization; acceleration; adaptive algorithm; cubic regularization; Newton's method; gradient method; iteration complexity.
	
	\vspace{0.25cm}
	
	\noindent {\bf Mathematics Subject Classification:} 90C06, 90C60, 90C53.
	
\section{Introduction}	
In this paper, we consider the following generic composite convex optimization model:
\begin{equation}\label{prob:main}
F^*:=\min_{\x\in\br^d} \ F(\x) = f(\x) + r(\x),
\end{equation}
where $f: \br^d\rightarrow\br$ is \textit{convex} and \textit{smooth}, $r: \br^d\rightarrow\br$ is \textit{convex} but possibly \textit{nonsmooth} with  simple proximal mapping, and $F^*>-\infty$.
During the past %the last a few
decades, various classes of optimization algorithms for solving \eqref{prob:main} (especially when $r(\x) = 0$ and $F(\x)$ becomes smooth) have been developed and carefully analyzed; see \cite{Luenberger-1984-Linear, Nocedal-2006-Numerical, Nesterov-2013-Introductory} for relevant information and references therein. %However, due to the curse of dimensionality,
%the evaluation of the gradient or the Hessian of $f$ becomes computationally expensive for the huge-scale problems in the era of big-data.
%Moreover,
Despite nice theoretical property of the existing solution methods, there has been a practical concern regarding the implementation,
% arises in the design of optimization algorithms:
as many methods assume that some problem parameters such as the first and the second order Lipschitz constants are available, which may be hard to estimate in practice. %typically unknown in practice.
%In fact, it is very likely that many trials and errors are taken before an optimization method exhibits acceptable performance. In this light, many recent efforts have been made to modify the classical algorithms and remove the inpendence on the problem parameters while keep the same (or even improve upon) iteration complexity of their classical counterparts.
It will be ideal to come up with optimization algorithms which automatically estimate such parametric values, making the algorithms easy-implementable while maintaining superior theoretical iteration bounds intact. In this case, we are demanding an algorithm to be less dependent on the knowledge of the problem structure at hand, therefore less prone to failures due to misinformation of such values. In this context, schemes that adaptively adjust the parameters used in the algorithms are often desirable, and
%the adaptive strategy for parameter tuning
are likely leading to better numerical performances.
%For instance, an increasing number of deep learning researchers agree on
For instance, researchers tend to
train their deep learning models with adaptive gradient method (see e.g.\ AdaGrad in \cite{Duchi-2011-Adaptive})  due to its robustness and effectiveness (cf.\ \cite{Karparthy-2017-Peak}). In fact, Adam \cite{Kingma-2014-Adam} and RMSProp \cite{Tieleman-2012-Lecture} are recognized as the default solution methods in the deep learning setting. Among the category of  second order methods, Cartis {\em et al.}\/ \cite{Cartis-2011-Adaptive-I,Cartis-2011-Adaptive-II} proposed and analyzed an adaptive cubic regularized Newton's method, which soon became very popular due to its numerical efficiency. In a recent working paper~\cite{Nesterov-2018}, Nesterov proposed two implementable high-order methods where he also commented that an unsolved issue in his approach was a dynamic adjustment scheme for the Lipschitz constant for the highest derivative to achieve practical efficiency.

%Other than being adaptive on parameters,
Another fundamental issue in optimization (as well as in machine learning) is to understand how the classical algorithms (including the first-order, second-order and high-order methods) can be accelerated. Nesterov  \cite{Nesterov-1983-Accelerated} put forward the very first accelerated (optimal in its iteration counts) gradient-based algorithm for smooth convex optimization. Beck and Teboulle \cite{Beck-2009-Fast} successfully extended Nesterov's approach to accomodate the problem in the form of \eqref{prob:main}.
%Recently, a number of adaptively accelerated gradient methods have been proposed; see~\cite{Duchi-2011-Adaptive, Nesterov-2013-Gradient, Lin-2014-Adaptive, Monteiro-2016-Adaptive, Scheinberg-Goldfarb-Bai-2014, Calatroni-Chambolle-2017}.
%Among those, the algorithms in \cite{ Scheinberg-Goldfarb-Bai-2014, Calatroni-Chambolle-2017} are {\it fully problem-parameter-free}.
Recently, accelerated algorithms were extended to incorporate second-order \cite{Nesterov-2008-Accelerating, Monteiro-2013-Accelerated} or high-order information \cite{Baes-2009, Nesterov-2018,Gasnikov-etal-2019} yielding a faster convergence rate.
%Comparing to their first-order counterpart,
%%the body of similar research for
%investigations on
%the second-order or high-order methods are relatively scarce, %completely undersized.
%as acceleration with the high-order information is much more involved.
%To the best of our knowledge,
%\cite{Nesterov-2008-Accelerating, Monteiro-2013-Accelerated,  ghadimi2017second} are the only papers that are concerned with accelerating the second-order methods, and \cite{Nesterov-2008-Accelerating} is the only one that studies acceleration for high (higher than 2) order tensor approximation method.
However, these algorithms do require the knowledge of some problem-specific parametric (Lipschitz) constants.  %more of theoretical sense.
%This is rather understandable: the technique of acceleration with  Hessian information is much more tricky \cite{Nesterov-2008-Accelerating, Monteiro-2013-Accelerated} than that required by the acceleration with gradient, even without adoptting the adaptive strategy.

Overall, algorithms exhibiting both traits of \textbf{\textit{acceleration}} and \textbf{\textit{adaptation}} have been largely missing in the context of convex optimization.
%has been at the center-stage in fueling the research not only in the optimization community but also for the area of scientific computing, machine learning and data analytics.
%However, the state of the art for \textit{accelerated adaptive algorithms} lags significantly behind.
%As a matter of fact,
{ To the best of our knowledge,
	besides this %prior to our work
	and two other very recent reports \cite{Grapiglia-Nesterov-2019a,Grapiglia-Nesterov-2019b}},
%there were no other papers proposing %we are unaware of any
there has been no other paper on accelerated second-order methods (or any high-order methods) that are fully independent of the problem constants while maintaining superior %sharp % theoretical bounds for the
theoretical iteration bounds.
{ There are results on some combinations of the above flavors though.}
For instance, the adaptive cubic regularized Newton's method \cite{Cartis-2012-Evaluation} is Hessian-free and problem-parameter-free, and allows subproblem to be solved inexactly, but it merely achieves an iteration  bound of
$O\left( 1 /\epsilon^{1/2} \right)$ without acceleration. %, which is actually no better than the optimal first-order methods.
Thus, a natural question raises:
%\begin{quote}
Can we develop an implementable accelerated second-order method with an iteration complexity lower than $O\left( 1 / \epsilon^{1/2} \right)$?
%\end{quote}
%In this paper, we develop a unified two-phase scheme that achieves both parameter adaptation and algorithm acceleration with applications to the first-order and second-order methods.
One goal of this paper is to present an affirmative answer to this question. It turns out the resulting accelerated adaptive cubic regularization algorithm displays an excellent numerical performance in solving a variety of large-scale machine learning models in our experiments.
%Furthermore, the analysis is extended to accelerating any high-order tensor approximation approach while allowing the objective to be composite: a part of the objective function need not be smooth at all.

\subsection{Related Work}\label{Section1:RelatedWork}
Nesterov's seminal work \cite{Nesterov-1983-Accelerated} triggered a burst of research on accelerating first-order methods.
There have been a good deal of recent efforts
%trying to understand its nature from other perspectives \cite{Allen-2014-Linear, Bubeck-2015-Geometric, Su-2016-Differential, Wibisono-2016-Variational, Wilson-2016-Lyapunov}, or modify it to account for more general settings \cite{Beck-2009-Fast, Cotter-2011-Better, Lan-2012-Optimal, Drori-2014-Performance, Shalev-2014-Accelerated, Lin-2015-Universal}. Parallel to this,
studying the adaptive gradient methods with the optimal convergence rate
%have been proposed
\cite{Duchi-2011-Adaptive, Nesterov-2013-Gradient, Lin-2014-Adaptive, Monteiro-2016-Adaptive}, and widely used in training the deep neural networks \cite{Kingma-2014-Adam, Tieleman-2012-Lecture}.
When the second-order information is available, Nesterov accelerated cubic regularized Newton's method \cite{Nesterov-2008-Accelerating} and {obtained} an improved iteration bound of $O\left( 1 / \epsilon^{1/3} \right)$. After that, Monteiro and Svaiter \cite{Monteiro-2013-Accelerated} managed to accelerate the Newton proximal extragradient method with a faster convergence rate of
$O\left( 1 / \epsilon^{2/7} \right)$. Very recently, Arjevani \etal\cite{Arjevani-Shamir-Shiff-2018} proved that $O\left( 1 / \epsilon^{2/7} \right)$ is actually a lower bound for the oracle complexity of the second-order methods for convex optimization, which implies Monteiro and Svaiter'method is an optimal second-order method.
In a recent work \cite{ghadimi2017second}, Ghadimi \etal generalized accelerated Newton's method with cubic regularization under inexact
second-order information. However, the complexity bound
is theoretically worse than that of its exact counterparts, and  only as good as that of the optimal first-order method. Baes \cite{Baes-2009} extended the method in \cite{Nesterov-2008-Accelerating} to the high-order case and further improved the iteration complexity to $O(1/\epsilon^{1/(p+1)})$.
Such extension was recently revisited by Nesterov
\cite{Nesterov-2018} elaborating %with a nice discussion
on an efficient implementation when $p = 3$. On the other hand, Arjevani \etal\cite{Arjevani-Shamir-Shiff-2018} showed that the worst case iteration complexity of any high-order algorithm cannot be better than $O(1/\epsilon^{2/(3p+1)})$ and shortly after that an optimal high-order method was proposed in \cite{Gasnikov-etal-2019} with iteration complexity matching such lower bound. However, an additional bisection search is needed in each iteration of the method, and the number of bisection step consumed is bounded by a logarithmic factor in the given precision \cite{Monteiro-2013-Accelerated, Jiang-Wang-Zhang-2018, Bubeck-Jiang-Lee-Li-Sidford-2018}. Recently, Wilson~\etal\cite{Wilson-2019-Accelerating} proved that a family of first-order rescaled gradient descent algorithms can achieve the same convergence rate as the optimal $p$th tensor algorithms for optimizing the so-called $p$th order strongly smooth (see \cite{Wilson-2019-Accelerating}) objective.

Although the parameter-free approach is well studied in the first-order case, all the afore-mentioned high-order (including second-order) accelerated methods assume that the Lipshitz constant for certain degree of derivative is known, which may be unrealistic. To alleviate this, { Cartis \etal
	\cite{Cartis-2011-Adaptive-I,
		Cartis-2011-Adaptive-II,
		Cartis-2012-Evaluation}}
incorporated adaptive strategies into { the method of Nesterov and Polyak}  {\cite{Nesterov-Polyak-2006-Cubic}}, and further relaxed the criterion for solving each sub-problem while maintaining the convergence properties for both convex \cite{Cartis-2012-Evaluation} and non-convex \cite{Cartis-2011-Adaptive-I, Cartis-2011-Adaptive-II} cases. However, as mentioned earlier, the iteration complexity established in \cite{Cartis-2012-Evaluation} for convex optimization is merely $O\left( 1 / \epsilon^{1/2} \right)$.

In the context of nonconvex optimization, high-order information had already been proved to be useful to improve the convergence rate of the algorithms.
In particular, Birgin \etal\cite{Birgin-Gardenghi-Martinez-Santos-Toint-2017} first proposed a high-order regularization method similar to % in similar spirit of
the cubic regularized algorithms in \cite{Cartis-2011-Adaptive-I, Cartis-2011-Adaptive-II}, %by inheriting the merit of
using adaptive parameter-tuning and inexact subproblem solving. Interestingly, high-order information enables finding high-order critical points \cite{CartisGouldToint2018} yielding a solution with better quality, and the method in \cite{Birgin-Gardenghi-Martinez-Santos-Toint-2017}  was improved by Cartis \etal\cite{Cartis2017} to converge to second-order critical points. { Compared to the cubic regularized algorithm in \cite{Cartis-2011-Adaptive-I, Cartis-2011-Adaptive-II}, the high-order regularization methods in \cite{Birgin-Gardenghi-Martinez-Santos-Toint-2017, Cartis2017} have better iteration complexity for finding the first and the second order critical points.} Recently, the high-order methods were proposed to solve nonsmooth and/or constrained optimization problems \cite{Birgin-Gardenghi-Martinez-Santos-Toint-2016, MARTINEZ2017, CartisGouldToint2019, Chen-Toint-Wang-2019, Chen-Toint-2019, Bullins-Peng-2019}.
In the literature, there are second-order methods which are efficient for solving \eqref{prob:main}, and they are referred to as proximal (quasi-)Newton methods. The global convergence and the local superlinear rate of convergence of those methods have been shown in \cite{Lee-2014-Proximal} and more recently in \cite{Byrd-2016-Inexact}.
%The global sublinear rate of $O(1/\epsilon)$ for proximal quasi-Newton methods is established in \cite{Scheinberg-2016-Practical} and was later accelerated to $O(1/\epsilon^{1/2})$ in \cite{Ghanbari-2018-Proximal}. \cite{Becker-2018-Quasi} proposed a highly efficient evaluation of the proximal mapping within the quasi-Newton framework.
Grapiglia and Nesterov \cite{Grapiglia-Nesterov-2018} studied accelerated regularized Newton's methods of solving problem \eqref{prob:main}, where $f$ is twice differentiable with a H{\"o}lderian continuous Hessian, and they showed that the iteration bound depends on the H{\"o}lderian parameter. As we were finalizing this manuscript, we noticed that Grapiglia and Nesterov  \cite{Grapiglia-Nesterov-2019a, Grapiglia-Nesterov-2019b} extended their previous results to high-order case including an adaptive variant {with a similar theoretical guarantees to ours}, where the H{\"o}lderian parameter may be unknown. In comparison with the algorithm proposed in this paper, their algorithms only have one single phase, and are with a different acceptance condition, and use a different %a distinct construction for
auxiliary function. {In addition, the adaptive parameter in their auxiliary function is updated via computing a positive solution of a suitable univariate polynomial equation, which guarantees a key inequality that ensures
	acceleration. Such a parameter in our algorithm is dynamically adjusted. Consequently, different parameter choices lead to slightly different numerical performances (see Section \ref{Subsec:l1-numerical} for more details).}

%However, their adaptive algorithm failed to retain the same convergence rate as its nonadaptive counterpart.

%which also solves the problem in the form of \eqref{prob:main} with the focus on exploring the H{\"o}lder continuity of the Hessian, and study how this affects the iteration bound of the algorithm. In comparison, our framework focuses on the high-order approximation, adaptive strategy and inexact solutions of the subproblems.

%To the best of our knowledge, the existing best adaptive second-order method \cite{Cartis-2012-Evaluation} achieves the iteration complexity of the order of $O\left( 1 / \epsilon^{1/2} \right)$, which is actually not better than the optimal first-order methods. Moreover, we are unaware of any prior accelerated second-order methods, or even any first-order methods that fully adapt problem parameters, yet maintaining an improved theoretical computational complexity bound. Therefore, a natural question raises:
%\begin{quote}
%	\textsf{Can we develop an implementable accelerated cubic regularization method with an iteration complexity of $O\left( 1 / \epsilon^{1/3} \right)$ presented in \cite{Nesterov-2008-Accelerating}?}
%\end{quote}

\subsection{Contributions}\label{Section1:Contribution}
The contributions of this paper can be summarized as follows.
We present a unified adaptive accelerating scheme that can be specialized to several optimization algorithms including gradient method, cubic regularized Newton's method with \textit{exact/inexact} Hessian and high-order method. For the gradient method, our adaptive algorthm achieves a convergence rate of $O\left( 1 / \epsilon^{1/2} \right)$ (Theorem \ref{Thm:first-order}) which matches the optimal rate for the first order methods \cite{Nesterov-2013-Introductory}. For the cubic regularized Newton's method we show that a global convergence rate
of $O\left( 1 / \epsilon^{1/3} \right)$ holds (Theorem \ref{Thm:second-order}) without assuming any knowledge of the problem parameters. We further prove that, even without the exact Hessian information, the same $O\left( 1 / \epsilon^{1/3} \right)$ rate of convergence (Theorem \ref{Thm:inexact-second-order}) is still achievable
for the cubic regularized approximative Newton's method. When our adaptive scheme reduces to the high-order method, the global rate of $O\left( 1 / \epsilon^{1/(p+1)} \right)$ is guaranteed by utilizing up to $p$-th order information, which achieves the same iteration bound as in Baes \cite{Baes-2009} {and Nesterov} \cite{Nesterov-2018}. Therefore, all the algorithms developed in this paper are problem-parameter-free due to the adopted {\it fully adaptive}\/ strategies, while retaining the same convergence rate.
Note that the accelerated first-order methods proposed in
\cite{Scheinberg-Goldfarb-Bai-2014, Calatroni-Chambolle-2017} shared the same characteristics, albeit their analysis is quite different. Similarly, the algorithms in \cite{Grapiglia-Nesterov-2019a, Grapiglia-Nesterov-2019b} by Grapiglia and Nesterov are parameter-independent, {and their convergence rates also match that of the nonadaptive ones.} In addition, the adaptivity enables an efficient implementation of the algorithm, while
numerical experiments are largely missing in the literature of high-order methods. There are a few numerical results reported in \cite{ghadimi2017second}, however the convergence rate is shown to be only as good as that of the accelerated first-order method.
In this paper, we performed numerous numerical experiments which showed a clear effect of acceleration of the proposed algorithms.
Finally, our convergence rate, which attains the same order of magnitude as those of \cite{Nesterov-2008-Accelerating, Baes-2009, Nesterov-2018}, is inferior than the rate for the optimal high-order method \cite{Gasnikov-etal-2019}. However, the gap between the two %in the convergence rate between the two types of methods
is small indeed; e.g.\ for $p=2$ the gap amounts to $O\left(1/\epsilon^{1/3 - 2/7}\right)= O\left(1/\epsilon^{1/21}\right)$. Arguably, the additional logarithmic factors required by the optimal method \cite{Jiang-Wang-Zhang-2018, Bubeck-Jiang-Lee-Li-Sidford-2018} could easily dominate the gap for practical $\epsilon$ values. %kill its tiny superiority in the convergence rate.

\subsection{Notations and Organization} {\ }

{\em Notations.} We denote vectors by bold lower case letters, e.g., $\x$, and matrices by regular upper case letters, e.g., $X$. The transpose of a real vector $\x$ is denoted as $\x^\top$. For a vector $\x$, and a matrix $X$, $\left\|\x\right\|$ and $\left\|X\right\|$ denote the $\ell_2$ norm and the matrix spectral norm, respectively. { We use $\lambda_{\min}(X)$ to denote the minimum eigenvalue of the matrix $X$,} and $\nabla f(\x)$,  $\nabla^2 f(\x)$ and $\nabla^d f(\x)$ to indicate the gradient, the Hessian and $p$-th order derivative tensor of $f$ at $\x$, respectively. We denote
\begin{equation*}
\nabla^d f(\x)[\x^1,\dots,\x^d] := \sum_{i_1,\dots,i_d=1}^{n} \nabla^d f(\x)_{i_1,\dots,i_d} \x^{1}_{i_1} \dots \x^{d}_{i_d},
\end{equation*}
and $I$ denotes the identity matrix. For two symmetric matrices $A$ and $B$, $A \succeq B$ indicates that $A-B$ is symmetric positive semidefinite. The $\log(x)$ denotes the natural logarithm of $x$ for $x>0$.

{\em Organization.} The rest of the paper is organized as follows. In Section~\ref{section:preliminaries}, we introduce some preliminaries and the assumptions used throughout this paper. In Section~\ref{section:framework}, we propose our general framework to adaptively accelerate various optimization algorithms, and present the main theoretical results on the iteration complexity. Section~\ref{section:specializations} is devoted to specializations of our framework to first-order methods, second-order methods, and high-order methods.
In Section~\ref{section:experiment}, we present some preliminary numerical results on solving
$\ell_2$-regularzied and $\ell_1$-regularized logistic regression problems,
%solving $\ell_1$-regularized logistic regression problem,
where acceleration of the method based on the adaptive cubic regularization for Newton's method is clearly observed. The details of all the proofs can be found in the appendix.

\section{Preliminaries}
\label{section:preliminaries}
Throughout this paper, we make the following assumptions for problem~\eqref{prob:main}.
\begin{assumption} \label{Assumption:Objective-Standard}
	$F$ is a proper, closed and convex function in the domain
	\[\dom(F) := \left\{\x\in\br^d\mid F(\x)<+\infty\right\}, \]
	and the optimal set of problem~\eqref{prob:main} is nonempty.
\end{assumption}
\begin{assumption} \label{Assumption:Gradient-Lipschitz-Continuous}
	The function $f$ is $p$-th continuously differentiable and $\nabla^j f$ is Lipschitz continuous with $L_j>0$ for $p-1 \leq j \leq p$, i.e.,
	\begin{equation}\label{Inequality:Gradient-Lipschitz-Continuous-Assumption}
	\left\|\nabla^j f(\x) - \nabla^j f(\y)\right\| \leq L_j\left\|\x - \y\right\|, \quad \forall\x,\y\in\dom(F)，
	\end{equation}
	where
	$$\left\|\nabla^j f(\x) - \nabla^j f(\y)\right\|=\max\limits_{\|z^i\|= 1,\,i=1,...,j}  \left(\nabla^j f(\x) - \nabla^j f(\y)\right)[z^{1} \dots z^{j}]
	$$
	is the operator norm associated with the tensor $\nabla^j f(\x) - \nabla^j f(\y)$.
\end{assumption}
We remark that {the $p$-th order Lipschitz continuity condition in} Assumption~\ref{Assumption:Gradient-Lipschitz-Continuous} is standard in the convergence analysis of $p$-th order optimization methods for minimizing smooth functions (\cite{Birgin-Gardenghi-Martinez-Santos-Toint-2017, Nesterov-2018}). {The Lipschitz continuous assumption on both $p$-th order and $(p-1)$-th order derivative is common in derivative-free method with $p=2$ \cite{Cartis-2012-Oracle} and is only needed in Subsection \ref{Subsection:Inexact-Hessian} of this paper to deal with second-order method with inexact Hessian information.}
We consider the following $p$-th order approximation of $f(\y)$ at point $\x$:
\begin{eqnarray*}
	&& \tilde{f}_p(\y;\x) \\
	&=& f(\x) + \left(\y-\x\right)^\top \nabla f(\x) + \frac{1}{2}\left(\y-\x\right)^\top \nabla^2 f(\x)\left(\y-\x\right) + \sum_{j=3}^{p} \frac{1}{j!} \nabla^j f(\x)\underbrace{\left[\y-\x, \ldots, \y-\x\right]}_{j \text{ terms}}.
\end{eqnarray*}
Under Assumptions~\ref{Assumption:Objective-Standard}-\ref{Assumption:Gradient-Lipschitz-Continuous}, the following two inequalities follow from residual analysis for the Taylor expansion (see also \cite{Birgin-Gardenghi-Martinez-Santos-Toint-2017, Nesterov-2018}):
%derived from the well-known Taylor's expansion theorem, i.e.,
\begin{eqnarray}\label{Inequality:Objective-Lipschitz-Continuous}
\left| f(\y) - \tilde{f}_p(\y;\x) \right|
\leq  \frac{L_p\left\|\y-\x\right\|^{p+1}}{(p+1)!},
\end{eqnarray}
and
\begin{equation}\label{Inequality:Gradient-Lipschitz-Continuous}
\left\| \nabla f(\y) - \nabla \tilde{f}_p(\y;\x) \right\| \leq \frac{L_p\left\|\y-\x\right\|^{p}}{p!}.
\end{equation}
Based on $\tilde{f}_p(\y;\x) $, we consider other approximations of $f(\y)$. We call function $\overline{m}(\y;\x)$  an \textit{effective approximation}  of the smooth function $f(\y)$ at point $\x$ if the following properties hold.
\begin{definition} \label{Definition:Approximate-Properties}
	We call $\overline{m}(\y; \x)$ to be an effective approximation of $f(\y)$ at a given point $\x \in \dom(F)$ if it satisfies the following three properties:
	\begin{itemize}
		\item[(i)] For any $\y \in \dom(F)$, it holds that
		\begin{equation} \label{Def:Effective-Objective-All}
		\left| f(\y) - \overline{m}(\y; \x) \right| \leq \bar{\kappa}_p \|\y - \x\|^{p} + \kappa_p \|\y - \x\|^{p+1}
		\end{equation}
		for some constants $\bar{\kappa}_p$ and $\kappa_p$.
		\item[(ii)] For any $\bar{\x} \approx \argmin_{\y \in \br^d} \ m(\y; \x, \sigma)$, it holds that
		\begin{align}
		\left| f(\bar{\x}) - \overline{m}(\bar{\x}; \x) \right| & \leq  \beta_p \|\bar{\x} - \x\|^{p+1}, \label{Def:Effective-Objective-Solution} \\
		\| \nabla f (\bar{\x}) - \nabla \overline{m}(\bar{\x}; \x) \| & \leq \rho_p \|\bar{\x} - \x\|^p; \label{Def:Effective-Gradient-Solution}
		\end{align}
		or the above two inequalities hold for {a pair of $\left(h, \bar \x \right)$ satisfying} $\| \bar{\x} - \x \| \ge h$ when $\overline{m}(\bullet\, ; \x)$ is additionally dependent on some positive number $h$, where all the parameters are constants.
		\item[(iii)] $\overline{m}(\y; \x)$ is convex in $\y$.
	\end{itemize}
\end{definition}
{We remark that the dependence of $\overline{m}(\bullet\, ; \x)$ on $h$ only occurs in Subsection \ref{Subsection:Inexact-Hessian}, where an approximated Hessian matrix is constructed based on step size $h$, leading to such dependence. The pair of $\left(h, \bar \x \right)$ satisfying \eqref{Def:Effective-Objective-Solution} and \eqref{Def:Effective-Gradient-Solution}
	for $\| \bar{\x} - \x \| \ge h$
	can be found by a procedure similar to %such as %motivated by
	Algorithm 4.1 in~\cite{Cartis-2012-Oracle}.}
The specific choices of $\overline{m}(\y;\x)$ and the corresponding values of $\bar{\kappa}_p, {\kappa}_p, \beta_p, \rho_p$  will be discussed in Section \ref{section:specializations} and summarized in Table \ref{tab:m-choice}.  { With an effective approximation $\overline{m}(\y; \x)$ of $f(\y)$ in hand,  the approximation model for the objective function $F(\y)$ is now given by
	\begin{equation}\label{Subprob:p-power}
	m(\y; \x, \sigma) := \overline{m}(\y; \x)+ \frac{\sigma \left\|\y - \x\right\|^{p+1}}{p+1} + r\left(\y\right).
	\end{equation}
}

%are provided in Table \ref{tab:m-choice};  more details will be provided in Section \ref{section:specializations}.

We end this section by specifying the definitions of \textit{$\varepsilon$-optimality} and \textit{proximal mapping} which are frequently used in this paper.
\begin{definition}[$\varepsilon$-optimality]
	Given $\varepsilon\in\left(0,1\right)$, $\x\in\br^d$ is said to be $\varepsilon$-optimal to problem~\eqref{prob:main} if
	\[ F(\x) - F(\x^*) \leq \varepsilon, \]
	where $\x^*\in\br^d$ is an optimal solution to problem~\eqref{prob:main}.
\end{definition}
\begin{definition}[proximal mapping]
	The proximal mapping of $r$ at $\x \in \br^d$ is
	\[ \prox_r(\x) := \argmin_{\z\in\br^d} \ r(\z) + \frac{\left\|\z - \x\right\|^2}{2}. \]
\end{definition}

\section{Algorithmic Framework}\label{section:framework}
In this section, we propose a unified framework for accelerating the adaptive methods. This framework is composed of two subroutines: \textsf{Simple Adaptive Subroutine (SAS)} and \textsf{Accelerated Adaptive Subroutine (AAS)}. Specifically, the framework starts with \textsf{SAS}, which terminates as soon as one successful iteration is identified. Then, the output of \textsf{SAS} is used as the initial point to run \textsf{AAS} until a sufficient number of successful iterations $T_2$ are observed. {We also adopt the same auxiliary model as that used by Nesterov in \cite{Nesterov-2008-Accelerating,Nesterov-2018} except for the appearance of the subgradient $\xi$ due to the additional nonsmooth regularization:
	\begin{equation}\label{auxiliary-function}
	\psi_{j+1}(\z, \tau_{j+1}) = l_{j+1}(\z) + \tau_{j+1}R(\z)
	\end{equation}
	where $R(\z) = \frac{1}{2(p+1)}\left\|\z - \bar{\x}_0\right\|^{p+1}$, $l_0(\z)  =  F(\bar{\x}_0)$, $l_{j+1}(\z) = l_j(\z) + \Delta l_j\left(\z; \bar{\x}_{j+1}, \bar{\xi}_{j+1}\right)$, and
	\begin{eqnarray*}
		\Delta l_j(\z, \x, \xi) & = & \frac{\Pi_{\ell =2}^{{p+1}}(j+\ell)}{p!} \left[ F(\x) +\left(\z - \x\right)^\top\left(\nabla f(\x) + \xi\right)\right].
\end{eqnarray*}}
The details of our algorithmic framework are summarized in Algorithm~\ref{Algorithm:Framework} (in the order of ``Main Procedure'', ``SAS'' and ``AAS'').

\begin{algorithm}[!t]\small
	\caption{A Generic Unified Adaptive Acceleration Framework (UAA)} \label{Algorithm:Framework}
	{\bf Main Procedure:}
	\begin{algorithmic}
		\STATE \textbf{Input:} $\x_0\in\br^d$, $\sigma_0 \geq \sigma_{\min} > 0$, $\tau_0 > 0$, $\gamma_2 > \gamma_1 > 1$, $\gamma_3 > 1$, $\eta > 0$, and approximate model $m(\cdot)$.
		\STATE Phase I (SAS): $\left[ {\bar{\x}_0, \sigma^{SAS}}\right] = \textsf{SAS}\left(\x_0, \sigma_0, \sigma_{\min}, \gamma_1, \gamma_2, m \right)$.
		{ \IF{$p \ge 3$ ($p$ is the power index of the regularizer in $m(\cdot)$)}
			\STATE \quad  $\sigma_0^{AAS}=\max\{\sigma^{SAS},  -\lambda_{\min}\left(\nabla^2\overline{m}(\bar{\x}_0;{{\x_0}}) \right)/\|\bar{\x}_0 - {\x_0}\|^{p-1}\}$.
			\ELSE
			\STATE   $\sigma_0^{AAS} = \sigma^{SAS}$.
			\ENDIF
		}
		\STATE  Phase II (AAS): $\left[\x_{out}\right] = \textsf{AAS}\left({\bar{\x}_0}, \sigma_0^{AAS}, \sigma_{\min}, \tau_0, \gamma_1, \gamma_2, \gamma_3, \eta, m \right)$.
		\STATE \textbf{Output:} an $\varepsilon$-optimal solution $\x_{out}$.
	\end{algorithmic}
	\vspace{0.3cm}
	
	{\bf Simple Adaptive Subrutine:} $\textsf{SAS}\left(\x_0, \sigma_0, \sigma_{\min}, \gamma_1, \gamma_2, m\right)$ \\
	\begin{algorithmic}
		\STATE \textbf{Initialization:} the total iteration count $i=0$ and successful iteration count $j=0$.
		\REPEAT
		\STATE compute $\x_{i+1} \approx \argmin_{\x \in \br^d} \ m( \x; \x_i, \sigma_i)$.
		\IF{$F(\x_{i+1}) - m\left(\x_{i+1}; \x_i, \sigma_i\right) < 0$}
		\STATE update $\sigma_{i+1} \in \left[\sigma_{\min}, \sigma_i\right]$ and $j=j+1$.
		\ELSE
		\STATE update $\x_{i+1} = \x_i$ and $\sigma_{i+1} \in \left[\gamma_1\sigma_i, \gamma_2\sigma_i\right]$.
		\ENDIF
		\STATE update $i=i+1$.
		\UNTIL{the successful iteration count $j=1$.}
		\STATE \textbf{Output:} the total iteration number $i$, the iterate $\x_i$ and the regularization parameter $\sigma_i$.
	\end{algorithmic}
	\vspace{0.3cm}
	
	{\bf Accelerated Adaptive Subroutine:} $\textsf{AAS}\left(\x_0, \sigma_0, \sigma_{\min}, \tau_0, \gamma_1, \gamma_2, \gamma_3, \eta, m \right)$ \\
	\begin{algorithmic}
		\STATE \textbf{Initialization:} the total iteration count $i=0$ and successful iteration count $j=0$.
		\STATE \textbf{Initial Step:} construct the auxiliary model $\psi_0(\z, \tau_0)=l_0(\z)+\tau_0 R(\z)$, update $\bar{\x}_0 = \x_0$, compute $\z_0 = \argmin_{\z\in\br^d} \ \psi_0(\z, \tau_0)$ and $\y_0 = \frac{1}{p+2}\bar{\x}_0 +  \frac{p+1}{p+2}\z_0$.
		\FOR{$i=0,1,2,\ldots $ {until convergence,}}
		%		\STATE construct the approximate model $m(\x; \y_j, \sigma_i)$.
		\STATE compute $\x_{i+1} \approx \argmin_{\x \in \br^d} \ m(\x; \y_j, \sigma_i)$ and $\xi_{i+1}\in\partial r\left(\x_{i+1}\right)$.
		\IF{$\theta(\x_{i+1}, \y_j, \xi_{i+1}) \geq \eta$}
		\STATE update $\bar{\x}_{j+1} = \x_{i+1}$ and $\bar{\xi}_{j+1} = \xi_{i+1}$.
		\STATE update $l_{j+1}(\z) = l_j(\z) + \Delta l_j\left(\z; \bar{\x}_{j+1}, \bar{\xi}_{j+1}\right)$ and $\tau_{j+1} = \tau_j$.
		\REPEAT
		\STATE update $\tau_{j+1} = \gamma_3\tau_{j+1}$, and\\
		$\z_{j+1} = \argmin_{\z\in\br^d} \ \left\{ \psi_{j+1}(\z, \tau_{j+1}) = l_{j+1}(\z) + \tau_{j+1} R(\z)\right\}$.
		\UNTIL{$\psi_{j+1}(\z_{j+1}, \tau_{j+1}) \geq \frac{\Pi_{\ell=1}^{p+1}(j+1+ \ell)}{(p+1)!} F(\bar{\x}_{j+1})$}
		\STATE update $\y_{j+1} = \frac{(j+1)+1}{(j+1)+p+2} \bar{\x}_{j+1} + \frac{p+1}{(j+1)+p+2}\z_{j+1}$, $\sigma_{i+1} \in \left[\sigma_{\min}, \sigma_i\right]$ and $j=j+1$.
		\ELSE
		\STATE update $\x_{i+1} = \x_i$ and $\sigma_{i+1} \in \left[\gamma_1\sigma_i, \gamma_2\sigma_i\right]$.
		\ENDIF
		\ENDFOR
		\STATE \textbf{Output:} the total number of iterations $i$ and the iterate $\x_i$.
	\end{algorithmic}
\end{algorithm}
We remark that the
two-phase scheme is necessary in our analysis to establish the accelerated rate of convergence { while maintaining promising numerical performance.}
Some key ingredients of the framework are explained below:

\textbf{Input:} The input contains nine elements: $\x_0\in\br^d$ is the initial point; $\sigma_0$ is the initial regularization parameter for the approximate model; $\sigma_{\min}$ is the safeguard level for the regularization parameter; $\tau_0$ is the initial regularization parameter for the auxiliary model; $\gamma_1, \gamma_2, \gamma_3 \in \left(1, +\infty\right)$ are the ratios for adapting $\sigma$ and $\tau$, $\eta>0$ is the threshold for \textsf{AAS}. { The approximation model $m(\cdot)$ is as given in \eqref{Subprob:p-power}.}

In each iteration of our algorithm, we seek an approximate solution of minimizing $m(\cdot; \x, \sigma)$, which is defined as follows.

%$\min_{\y \in \br^d}$
%\textbf{Approximativeness measure:} Based on the effective approximation $\overline{m}(\y; \x)$, we solve the following $(p+1)$-th powered regularized function in each iteration of our algorithm:
%\begin{equation}\label{Subprob:p-power}
%\min_{\y \in \br^d} m(\y; \x, \sigma) = \overline{m}(\y; \x)+ \frac{\sigma \left\|\y - \x\right\|^{p+1}}{p+1} + r\left(\y\right).
%\end{equation}
%In fact, we do not need to solve the above problem exactly but seek an approximate solution, which is defined as follows.

\begin{definition} \label{Definition:Approximate-Model}
	Let us call
	$\bar{\x} \approx \argmin_{\y \in \br^d} \ m(\y; \x, \sigma)$ with $\bar{\xi} \in\partial r\left(\bar{\x}\right)$ if \\ $m(\bar \x; \x, \sigma) \le  m(\x; \x, \sigma)$ and
	\begin{equation}\label{Criterion:Approximate-Adaptive}
	\left\| \nabla \overline{m}(\bar{\x}; \x)+ \sigma \left\| \bar{\x} - \x\right\|^{p-1} \left(\bar{\x} - \x\right) + \bar{\xi}\right\| \leq \kappa_\theta \left\|\bar{\x} - \x\right\|^p, \qquad \kappa_\theta>0.
	\end{equation}
\end{definition}
Note that the condition \eqref{Criterion:Approximate-Adaptive} without $\bar{\xi}$ was firstly proposed in \cite{Birgin-Gardenghi-Martinez-Santos-Toint-2017} for smooth nonconvex optimization. In the case of $r(\x) =0$ and $p=2$, such approximativeness measure does not include the following condition:
\begin{equation}\label{Subprob:Cubic}
(\bar \x- \x )^\top \nabla f(\x) + (\bar \x- \x )^\top \nabla^2 f(\x) (\bar \x- \x ) + \sigma\left\|(\bar \x- \x )\right\|^3 = 0,
\end{equation}
and thus weaker than the one used in \cite{Cartis-2011-Adaptive-I}.
This relaxation also suggests other approximations and implementable solution methods for \eqref{Subprob:Cubic}. For instance, Carmon and Duchi proposed to use gradient descent method to solve \eqref{Subprob:Cubic}, and they proved that it works well even when $m(\y; \x, \sigma)$ is nonconvex. However, the function $m(\y; \x, \sigma)$ in our case is {strictly convex as long as $y \neq x$}, and thus the gradient descent is {likely} to exhibit a fast (linear) convergence behavior. {When $r(\x) \neq0$ and $p=2$, we solve the subproblem with accelerated proximal gradient method (APGD) as $m(\y; \x, \sigma)$ is guaranteed to be convex in this case. For more general case of $r(\x) \neq0$ and $p\ge 3$,
	we may resort to some existing algorithms \cite{Ghadimi-Lan-Zhang-2016, Ghadimi-Lan-2016, Jiang-Lin-Ma-Zhang-2019} tailored for nonconvex composite optimization. In particular, we adopt
	the proximal gradient method (PGD) \cite{Ghadimi-Lan-Zhang-2016}} with the initialization $\x_0=\x$ and the step size $\alpha>0$ with the $k$-th iteration being:
\[ \x_{i,k+1} = \prox_{r/\alpha}\left(\x_{i,k} - \frac{\nabla \overline{m}(\x_{i,k}; \x)+ \sigma \left\| \x_{i,k} - \x\right\|^{p-1} \left(\x_{i,k} - \x\right)}{\alpha}\right), \]
until $\x_{i,k}\approx \argmin_{\y \in \br^d} \ m(\y; \x, \sigma)$.

{\textbf{Solving auxiliary model}: In this framework, we update $\z_{j+1}$ by solving the auxiliary problem as defined in \eqref{auxiliary-function}:
	$\z_{j+1} = \argmin_{\z\in\br^d} \ \psi_{j+1}(\z, \tau_{j+1})$,
	where the parameter $\tau_{j+1}$ is tuned dynamically in the algorithm.}
This function is the bridge for the two-sided inquality in \eqref{KeyInquality:F} to establish the iteration bound.
In fact, the above subproblem can be solved exactly. To see this, write out
the optimality condition and get:
\[ \nabla l_{j+1}(\z_{j+1}) + \frac{\tau_{j+1}\left\|\z_{j+1} - \bar{\x}_0\right\|^{p-1}\left(\z_{j+1} - \bar{\x}_0\right)}{2} =0, \]
which implies that
\[ \left\|\z_{j+1} - \bar{\x}_0\right\| = \left( \frac{2 \|\nabla\ell_{j+1}(\z_{j+1}) \| }{\tau_{j+1}} \right)^{1/p}. \]
Moreover, we observe that $l_{j+1}(\z)$ is a linear function of $\z$ and hence $\nabla\ell_{j+1}(\z_{j+1})$ is independent of $\z_{j+1}$. Consequently, we conclude that
\[ \z_{j+1} = \bar{\x}_0 - \left(\frac{2}{\tau_{j+1}}\right)^{1/p}\frac{\nabla l_{j+1}(\z_{j+1})}{\left\|\nabla l_{j+1}(\z_{j+1})\right\|^{1-1/p}}. \]
\textbf{Criterion:} The criterion for determining the successful iteration in \textsf{AAS} is
\[\theta(\x_{i+1}, \y_j, \xi_{i+1}) \geq \eta.\]
In particular, for $p \geq 1$ we define $\theta(\x, \y, \xi)$ as
\[ \theta(\x, \y, \xi) = \frac{\left(\y - \x\right)^\top\left(\nabla f(\x) + \xi\right)}{\left\| \y - \x \right\|^{p+1}}. \]
\textbf{Output:} The output contains the total number of iterations $i$ and the iterate $\x_i$. Note that $\x_i$ is an $\varepsilon$-optimal solution for  problem~\eqref{prob:main}.

\subsection{Iteration Complexity of the UAA}
In this subsection, we first make the following assumption.

\begin{assumption}\label{Asu:bounded-levelset}
	Suppose $\x_0$ is the starting point of our algorithm and $\x^*$ is an optimal solution of problem \eqref{prob:main}. The level set $\mathcal{L}(x_0,\sigma) := \{ x \in \br^d \;  | \; m(\x; \x_0, \sigma) \le m(\x_0; \x_0, \sigma) = F(\x_0) \}$ of $m(\cdot)$ at $\x_0$ with regularization parameter $\sigma$ is bounded when $\sigma = {\sigma}_{\min}$, and that
	\begin{equation}\label{Bounded-levelset}
	\max_{\x \in \mathcal{L}(x_0, {\sigma}_{\min})} \| \x - \x^*  \| \le D < \infty.
	\end{equation}
\end{assumption}

Then we present the main theoretical results on the iteration complexity of \textsf{UAA}.
\begin{theorem}\label{Thm:iteration-complexity-uaa} Let the sequence of iterates $\{\bar{\x}_j, \ j \geq 0\}$ be generated by \textsf{AAS} in \textsf{UAA} and $\x^*$ be an optimal solution for \eqref{prob:main}. Denote
	\[ C := (p+1)! \left( \frac{2(p+1)\kappa_p + 2{\hat{\sigma}_1} + {\hat \sigma_2}}{2(p+1)}D^{p+1} + \, \bar{\kappa}_p D^p +  (\kappa_\theta + {\hat{\sigma}_1} ) (2D)^{p+1} \right), \]
	{where $\hat \sigma_1 :=\max\left\{\bar{\sigma}_1, \frac{L_p}{(p-1)!} \right\} $ and $\hat \sigma_2 :=\max\left\{ \tau_0, \frac{2^p\gamma_3\left(\rho_p + \bar{\sigma}_2 + \kappa_{\theta}\right)^{p+1} p^{p-1}}{\eta^p (p-1)!}\right\} $.}
	Then it holds that
	\[
	F(\bar{\x}_j) - F(\x^*) \leq \frac{C }{\Pi_{\ell=1}^{p+1}(j+\ell)},\]
	which implies that the total iteration number required to reach $\varepsilon$-optimal solution can be bounded by
	\begin{eqnarray*}
		j &\leq & 2 + \frac{2}{\log(\gamma_1)}\log\left(\frac{\bar{\sigma}_1}{\sigma_{\min}}\right) + \left\lceil \frac{1}{\log\left(\gamma_3\right)} \log\left(\frac{{2^p}\left(\rho_p + \bar{\sigma}_2 + \kappa_{\theta}\right)^{p+1} p^{p-1}}{\eta^p (p-1)!\tau_0}\right) \right\rceil  \\
		& & + \, \left(1+\frac{2}{\log(\gamma_1)}\log\left(\frac{\bar{\sigma}_2}{\sigma_{\min}}\right)\right)\left[1 + \left(\frac{C}{\varepsilon}\right)^{\frac{1}{p+1}} \right].
	\end{eqnarray*}
\end{theorem}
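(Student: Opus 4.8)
The plan is to establish the rate $F(\bar\x_j)-F(\x^*)\le C/\Pi_{\ell=1}^{p+1}(j+\ell)$ through the estimate-sequence mechanism built into \textsf{AAS}, and then to convert this rate into a total iteration count by separately bounding the number of unsuccessful steps and inner $\tau$-loop steps. Writing $A_j:=\Pi_{\ell=1}^{p+1}(j+\ell)/(p+1)!$, the whole argument rests on the two-sided inequality
\[ A_j\, F(\bar\x_j) \;\le\; \psi_j(\z_j,\tau_j) \;\le\; A_j\, F(\x^*) + \widehat{C}, \]
whose left half is exactly the stopping rule of the inner \textbf{REPEAT} loop, and whose right half is the substance to be proved; dividing by $A_j$ and recalling its definition then yields the claimed rate with $C=(p+1)!\,\widehat{C}$. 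So the first order of business is to secure both halves.

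For the lower half I would first argue that the inner $\tau$-loop terminates, i.e.\ that the minimum value $\psi_{j+1}(\z_{j+1},\tau_{j+1})$, which is monotone in $\tau$, eventually exceeds $A_{j+1}F(\bar\x_{j+1})$. Letting $\tau\to\infty$ drives $\z_{j+1}\to\bar\x_0$ and the min value toward $l_{j+1}(\bar\x_0)$, so the key is to show $l_{j+1}(\bar\x_0)\ge A_{j+1}F(\bar\x_{j+1})$, which I would extract from the successful-iteration test $\theta(\x_{i+1},\y_j,\xi_{i+1})\ge\eta$ together with convexity of $F$. This simultaneously produces the explicit upper bound on $\tau_j$ that feeds the $\log(\cdot)/\log\gamma_3$ term in the final count.

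For the upper half I would run an induction on $j$. Since $A_{j+1}-A_j=\Pi_{\ell=2}^{p+1}(j+\ell)/p!$ is exactly the weight multiplying the affine minorant in $\Delta l_j$, convexity of $F$ gives $l_{j+1}(\z)\le l_j(\z)+(A_{j+1}-A_j)F(\z)$, and unrolling from $l_0(\z)=F(\bar\x_0)$ (where $A_0=1$) yields $l_j(\z)\le F(\bar\x_0)+(A_j-1)F(\z)$. Estimating the minimizer's value by $\psi_j(\z_j,\tau_j)\le l_j(\x^*)+\tau_j R(\x^*)$ and using \eqref{Bounded-levelset} to bound $\|\x^*-\bar\x_0\|\le \mathrm{const}\cdot D$ (so $R(\x^*)\lesssim D^{p+1}$), together with the adaptive safeguards $\sigma_i\le\bar\sigma_1$ and the $\tau$-bound above, collects the constant. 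The correction terms $\bar\kappa_p D^p$, $\kappa_p D^{p+1}$ and $\kappa_\theta(2D)^{p+1}$ in $C$ enter precisely when I replace the idealized affine minorant and exact descent by the effective-approximation quantities via \eqref{Def:Effective-Objective-All}, \eqref{Def:Effective-Objective-Solution} and the approximativeness measure \eqref{Criterion:Approximate-Adaptive}; keeping these error terms of order $D^p$ and $D^{p+1}$ (rather than letting them grow with $j$) throughout the induction is the delicate bookkeeping step.

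Finally, to obtain the iteration count I would add up three contributions. The number of successful steps is read off from the rate: $A_j\ge \widehat{C}/\varepsilon$ forces $\Pi_{\ell=1}^{p+1}(j+\ell)\ge (p+1)!\,\widehat{C}/\varepsilon$, hence $j\gtrsim(C/\varepsilon)^{1/(p+1)}$. The unsuccessful steps in \textsf{SAS} and \textsf{AAS} are controlled because each failure multiplies $\sigma$ by at least $\gamma_1$, while \eqref{Def:Effective-Objective-Solution} guarantees a step becomes successful once $\sigma$ exceeds a fixed threshold below $\bar\sigma_1$ (resp.\ $\bar\sigma_2$); so there are at most $O\!\big(\log(\bar\sigma/\sigma_{\min})/\log\gamma_1\big)$ of them, producing the $\tfrac{2}{\log\gamma_1}\log(\bar\sigma_1/\sigma_{\min})$ and $\big(1+\tfrac{2}{\log\gamma_1}\log(\bar\sigma_2/\sigma_{\min})\big)$ factors. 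Multiplying the per-success overhead by the number of successes and adding the $\tau$-loop cost yields the stated bound. The main obstacle I anticipate is the upper-half induction: ensuring the accumulated approximation errors remain uniform in $j$ is what makes $C$ independent of $j$, and it hinges on choosing $\y_j$ as the prescribed convex combination of $\bar\x_j$ and $\z_j$ so that the per-step descent and the estimate-sequence increment match.
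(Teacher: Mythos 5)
Your skeleton coincides with the paper's: the two-sided inequality $A_jF(\bar{\x}_j)\le\psi_j(\z_j,\tau_j)\le A_jF(\x^*)+\widehat{C}$, where $A_j=\Pi_{\ell=1}^{p+1}(j+\ell)/(p+1)!$, is Theorem~\ref{Thm:Func-Val-Bound}; the counting of unsuccessful iterations through geometric growth of $\sigma$, and of the $\tau$-updates, is Lemmas~\ref{Lemma:Unified-SAS}, \ref{Lemma:Second-Order-AAS} and \ref{Lemma:AAS-Auxillary}; and your unrolled upper half is the induction of Theorem~\ref{Thm:Func-Val-Bound} built on the base case, Theorem~\ref{Thm:base-case}. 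On that upper half, the ``delicate bookkeeping'' you anticipate is actually a non-issue: the increment $\Delta l_j$ is $(A_{j+1}-A_j)$ times an affine minorant of $F$, so convexity alone shows that no approximation error accumulates with $j$; the terms $\bar{\kappa}_pD^p$, $\kappa_pD^{p+1}$ and $\kappa_\theta(2D)^{p+1}$ enter exactly once, at $j=0$, when the SAS output $\bar{\x}_0$ is compared with the global model minimizer $\bar{\x}_0^m$ via \eqref{Criterion:Approximate-Adaptive} and \eqref{Def:Effective-Objective-All} --- and the vector $\y_j$ plays no role in that induction at all. (A side remark: your estimate $\psi_j(\z_j,\tau_j)\le l_j(\x^*)+\tau_jR(\x^*)$ carries the terminal $\tau$ rather than the $\tau_0$ appearing in the stated $C$; same order, different constant.)

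The genuine gap is the lower half, which you dismiss as easy but which is where all of the paper's machinery lives. Your plan --- let $\tau\to\infty$, note $\psi_{j+1}(\z_{j+1},\tau)\to l_{j+1}(\bar{\x}_0)$, and prove $l_{j+1}(\bar{\x}_0)\ge A_{j+1}F(\bar{\x}_{j+1})$ from ``the success test plus convexity of $F$'' --- fails on two counts. First, a limiting argument cannot establish finite termination of the inner $\tau$-loop: since $l_{j+1}$ is affine, one computes exactly $\psi_{j+1}(\z_{j+1},\tau)=l_{j+1}(\bar{\x}_0)-\frac{p}{p+1}\bigl(2\|\nabla l_{j+1}\|^{p+1}/\tau\bigr)^{1/p}$, which is \emph{strictly} below the limit for every finite $\tau$ whenever $\nabla l_{j+1}\neq 0$; so a non-strict limit inequality proves nothing, and the explicit threshold needed for the $\lceil\log(\cdot)/\log\gamma_3\rceil$ term requires a quantified positive gap, which a limit argument does not supply. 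Second, the inequality $l_{j+1}(\bar{\x}_0)\ge A_{j+1}F(\bar{\x}_{j+1})$ does not follow from the success test and convexity alone. Unrolling one step, convexity and the stopping rule at step $j$ give $l_{j+1}(\bar{\x}_0)\ge A_{j+1}F(\bar{\x}_{j+1})+A_{j+1}\left(\nabla f(\bar{\x}_{j+1})+\bar{\xi}_{j+1}\right)^\top\left(\w_j-\bar{\x}_{j+1}\right)$ with $\w_j=\frac{j+1}{j+p+2}\bar{\x}_j+\frac{p+1}{j+p+2}\bar{\x}_0$, whereas the success test only controls the inner product against $\y_j-\bar{\x}_{j+1}$ with $\y_j=\frac{j+1}{j+p+2}\bar{\x}_j+\frac{p+1}{j+p+2}\z_j$. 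The leftover term, proportional to $\left(\nabla f(\bar{\x}_{j+1})+\bar{\xi}_{j+1}\right)^\top\left(\bar{\x}_0-\z_j\right)$, has no sign and can be arbitrarily negative. Taming it is precisely what the paper's three auxiliary lemmas accomplish: the uniform growth of $\psi_j$ around its minimizer $\z_j$ (Lemma~\ref{Lemma:Unified-Auxillary-Minimizer}, which supplies the compensating term $\frac{\tau_j}{2(p+1)}\|\bar{\x}_0-\z_j\|^{p+1}$), the balancing inequality \eqref{Inequality:First-Second-Order-General} to trade that power term against the linear one, and the gradient bound at successful iterates (Lemma~\ref{Lemma:Second-Order-Gradient}, which itself needs \eqref{Criterion:Approximate-Adaptive}, \eqref{Def:Effective-Gradient-Solution} and the bound $\sigma_i\le\bar{\sigma}_2$ of Lemma~\ref{Lemma:Second-Order-AAS}) to convert the success margin $\eta\|\y_j-\bar{\x}_{j+1}\|^{p+1}$ into a multiple of $\|\nabla f(\bar{\x}_{j+1})+\bar{\xi}_{j+1}\|^{(p+1)/p}$. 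Only after combining these does the stopping rule become achievable, and then only conditionally, once $\tau$ exceeds the explicit threshold $\frac{2\left(\rho_p+\bar{\sigma}_2+\kappa_{\theta}\right)^{p+1}p^{p-1}}{\eta^p(p-1)!}$, which is exactly the quantity feeding the $\gamma_3$-term of the theorem. Without these ingredients your lower half, and hence the whole two-sided inequality, does not close.
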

The proof of the theorem is technically involved and hence postponed to the appendix. 
%To give a holistic picture, we schematically sketch the main steps of the proof below.
%\vspace*{.5em}
%\begin{center}
%	\shadowbox{\begin{minipage}{4in}
%			\textbf{Proof Outline:}
%			\begin{enumerate}
%				\item We denote $T_1$ to be the total number of iterations in \textsf{SAS}. Note that the criterion for the sucessfuly iteration in \textsf{SAS} will be satisfied when $\sigma_i$ is sufficiently large. Then $T_1$ is bounded above by some constant (Lemma \ref{Lemma:Unified-SAS}).
%				\item We denote $T_2$ by the total number of iterations in \textsf{AAS}, and
%				\begin{equation*}
%				\SCal = \left\{j\leq T_2: j \text{ is a successful iteration in \textsf{AAS}}\right\}
%				\end{equation*}
%				to be the index set of all successful iterations in \textsf{AAS}.
%				Then $T_2$ is bounded above by $|\SCal|$ multiplied by some constant (Lemma \ref{Lemma:Second-Order-AAS}).
%				\item We denote $T_3$ by the total number of counts successfully updating $\tau>0$, and $T_3$ is upper bound by some constant (Lemma \ref{Lemma:AAS-Auxillary}).
%				
%				\item We relate the objective function to the count of successful iterations in \textsf{AAS} (Theorem \ref{Thm:Func-Val-Bound}).
%					\item Putting all the pieces together, we obtain an iteration complexity result (Theorem \ref{Thm:iteration-complexity-uaa}).
%			\end{enumerate}
%	\end{minipage}}
%\end{center}

\section{Specializations of the UAA}\label{section:specializations}
In this section, we provide some concrete choices of $\overline m (\y; \x)$, which leads to different iteration complexities of the correspoding algorithms. 
{To present a holistic picture of the results in this section, we summarize the forms of $\overline m (\y; \x)$ associated with different settings in Table \ref{tab:m-choice}.
	
	\begin{table}		\caption{Specific choices of $\overline{m}(\y;\x)$}\label{tab:m-choice}\vspace*{-2.5em}
				\vspace{0.2cm}
		\begin{center}
		\begin{tabular}{|c|c|c|c|c|c|} \hline
			{\footnotesize Derivative Inf.} & $\overline{m}(\y;\x)$ & $\bar{\kappa}_p$&${\kappa_p}$ & $\beta_p$ & $\rho_p$ \\ \hline \hline
			{\footnotesize up to $1$st-order } &$\tilde f_1(\y;\x)$ &$0$&$\frac{L_1}{2}$&$\frac{L_1}{2}$&$L_1$ \\ \hline
			{\footnotesize up to $2$nd-order } &$\tilde f_2(\y;\x)$ &$0$&$\frac{L_2}{6}$&$\frac{L_2}{6}$&$\frac{L_2}{2}$ \\ \hline
			{\footnotesize inexact Hessian } &{\footnotesize $\tilde f_1(\y;\x) + \frac{1}{2} (\y - \x)^{\top} H(\x) (\y - \x)$} &{\footnotesize $L_1+\kappa$}&$\frac{L_2}{6}$&$\frac{L_2 + 3 \kappa}{6}$&$\frac{L_2 + 2 \kappa}{2}$ \\  \hline
			{\footnotesize up to $p$th-order } &$\tilde{f}_p(\y;\x)$ &$0$&$\frac{L_p}{(p+1)!}$&$\frac{L_p}{(p+1)!}$&$\frac{L_p}{p!}$ \\
			\hline
		\end{tabular}
	\end{center}
\end{table}}

\subsection{First-Order Adaptive Accelerating Method}
The most popular choice of $\overline{m}(\y; \x)$ is the first order approximation:
$$
\overline{m}(\y; \x) = \tilde f_1(\y;\x) = f(\x) + (\y - \x)^{\top} \nabla f(\x) .
$$
Obviously, it is convex, and
by \eqref{Inequality:Objective-Lipschitz-Continuous} and \eqref{Inequality:Gradient-Lipschitz-Continuous},  (i) and (ii) in Definition  \ref{Def:Effective-Objective-All} are satisfied with
$$
\kappa_1 = \beta_1 = \frac{L_1}{2}, \quad \rho_1 = L_1, \quad \bar \kappa_1=0.
$$
Moreover, the subproblem becomes $\min_{\y \in \br^d} f(\x) + (\y - \x)^T\nabla f(\x) + \frac{\sigma\| \y - \x \|^2}{2} + r(\y)$, which has a closed form solution since $r(\cdot)$ has an easy proximal mapping.
Therefore, $\kappa_\theta = 0$ and we have the following iteration bound.
\begin{theorem}\label{Thm:first-order} Letting $\overline{m}(\y; \x) = \tilde f_1(\y;\x)$ in \textsf{UAA}, we obtain an adaptive accelerating first-order method, and  the total iteration number of getting an $\epsilon$-optimal solution is
	\begin{eqnarray*}
		& & 2 + \frac{2}{\log(\gamma_1)}\log\left(\frac{\bar{\sigma}_1}{\sigma_{\min}}\right) + \left\lceil \frac{1}{\log\left(\gamma_3\right)} \log\left(\frac{2\left(\frac{L_1}{2} + \bar{\sigma}_2 \right)^{2} }{\eta \tau_0}\right) \right\rceil \\
		& & + \left(1+\frac{2}{\log(\gamma_1)}\log\left(\frac{\bar{\sigma}_2}{\sigma_{\min}}\right)\right)\left[1 + \left(\frac{C_1}{\varepsilon}\right)^{\frac{1}{2}} \right]
	\end{eqnarray*}
	where $C_1 =   \frac{2 L_1 + 2 \bar{\sigma}_1 + \tau_0}{2}D^{2}$.
\end{theorem}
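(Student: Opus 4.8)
The plan is to derive Theorem~\ref{Thm:first-order} as the $p=1$ specialization of the master complexity result, Theorem~\ref{Thm:iteration-complexity-uaa}, so that the only genuine work is to check that $\overline{m}(\y;\x)=\tilde f_1(\y;\x)$ fits the framework and to read off the relevant constants before substituting. First I would confirm that the affine model $\tilde f_1(\y;\x)=f(\x)+(\y-\x)^\top\nabla f(\x)$ is an effective approximation: property (iii) holds trivially because an affine function of $\y$ is convex, while \eqref{Def:Effective-Objective-All}, \eqref{Def:Effective-Objective-Solution} and \eqref{Def:Effective-Gradient-Solution} follow by specializing the Taylor residual bounds \eqref{Inequality:Objective-Lipschitz-Continuous} and \eqref{Inequality:Gradient-Lipschitz-Continuous} to $p=1$. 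These give $|f(\y)-\tilde f_1(\y;\x)|\le \frac{L_1}{2}\|\y-\x\|^{2}$ and $\|\nabla f(\y)-\nabla f(\x)\|\le L_1\|\y-\x\|$, hence $\bar\kappa_1=0$, $\kappa_1=\beta_1=\frac{L_1}{2}$ and $\rho_1=L_1$, exactly the first row of Table~\ref{tab:m-choice}.

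Next I would establish that $\kappa_\theta=0$. With $p=1$ the subproblem minimized at each step is $\min_{\y\in\br^d} f(\x)+(\y-\x)^\top\nabla f(\x)+\frac{\sigma}{2}\|\y-\x\|^{2}+r(\y)$, which is nothing but a proximal step on $r$ and therefore admits a closed-form solution whenever $\prox_r$ is available. Since this $\bar\x$ is the exact minimizer, the first-order optimality condition holds with equality, so the left-hand side of \eqref{Criterion:Approximate-Adaptive} is zero and the criterion is met with $\kappa_\theta=0$. With all constants pinned down, the conclusion is pure substitution: inserting $p=1$, $\kappa_1=\frac{L_1}{2}$, $\bar\kappa_1=0$ and $\kappa_\theta=0$ into the definition of $C$ in Theorem~\ref{Thm:iteration-complexity-uaa}, the prefactor $(p+1)!=2$ cancels and yields $C_1=\frac{2L_1+2\bar\sigma_1+\tau_0}{2}D^{2}$; inserting the same data into the four-term iteration bound, the factors $p^{p-1}$, $(p-1)!$ and $\eta^p$ collapse to $1$, $1$ and $\eta$, the $\kappa_\theta$ contribution drops, and the exponent $\frac{1}{p+1}$ becomes $\frac{1}{2}$, producing the stated expression.

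Because Theorem~\ref{Thm:iteration-complexity-uaa} may be assumed, there is no deep analytic difficulty; the proof is essentially bookkeeping. The two points that deserve a little care are the verification that an exact proximal solve really forces $\kappa_\theta=0$ in \eqref{Criterion:Approximate-Adaptive} (and in particular that the exact minimizer automatically meets the descent requirement $m(\bar\x;\x,\sigma)\le m(\x;\x,\sigma)$ built into the approximate-argmin definition), and the algebraic simplification of the ceiling term at $p=1$, where the relevant gradient-approximation constant together with $\eta$ and $\tau_0$ must be collapsed to recover the claimed single logarithmic factor.
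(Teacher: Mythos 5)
Your proposal is correct and takes essentially the same route as the paper: Section~\ref{section:specializations} contains no separate proof beyond exactly your steps, namely checking that $\tilde f_1(\y;\x)$ is an effective approximation with $\bar\kappa_1=0$, $\kappa_1=\beta_1=\frac{L_1}{2}$, $\rho_1=L_1$, observing that $\kappa_\theta=0$ because the subproblem is an exact proximal step, and then substituting $p=1$ into Theorem~\ref{Thm:iteration-complexity-uaa}. The only caveat is that a literal substitution of $\rho_1=L_1$ yields $2\left(L_1+\bar\sigma_2\right)^2/(\eta\tau_0)$ inside the logarithm rather than the printed $2\left(\frac{L_1}{2}+\bar\sigma_2\right)^2/(\eta\tau_0)$; this mismatch is internal to the paper (its specialized theorems write $\beta_p$ where Theorem~\ref{Thm:iteration-complexity-uaa} has $\rho_p$) and only shifts a constant inside a ceiling-log term, so your argument is unaffected in substance, though your claim that the substitution ``produces the stated expression'' should flag it.
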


\subsection{Second-Order Adaptive Accelerating Method}
\subsubsection{Exact Hessian Approximation}
The second order approximation of $f$ under exact Hessian is given by
$$
\overline{m} (\y; \x) = \tilde f_2(\y;\x) = f(\x) + (\y - \x)^{\top} \nabla f(\x) + \frac{1}{2} (\y - \x)^{\top} \nabla^2 f(\x) (\y - \x).
$$
It is still a convex function. Moreover, by \eqref{Inequality:Objective-Lipschitz-Continuous} and \eqref{Inequality:Gradient-Lipschitz-Continuous},  (i) and (ii) in Definition  \ref{Def:Effective-Objective-All} are satisfied with
$$
\kappa_2 = \beta_2 = \frac{L_2}{6}, \quad \rho_2 = \frac{L_2}{2}, \quad \bar \kappa_2=0.
$$
Moreover, since $\nabla \overline{m} (\y; \x) = \nabla^2 f(\x) \succeq 0$, $\overline{m} (\y; \x)$ is a convex function.
Therefore, we have the following iteration bound.
\begin{theorem}\label{Thm:second-order} Letting $\overline{m}(\y; \x) = \tilde f_2(\y;\x)$ in \textsf{UAA}, we obtain an adaptive accelerating cubic regularized Newton's method, and  the total iteration number of getting an $\epsilon$-optimal solution is
	\begin{eqnarray*}
		& & 2 + \frac{2}{\log(\gamma_1)}\log\left(\frac{\bar{\sigma}_1}{\sigma_{\min}}\right) + \left\lceil \frac{1}{\log\left(\gamma_3\right)} \log\left(\frac{4\left(\frac{L_2}{6} + \bar{\sigma}_2 + \kappa_{\theta}\right)^{3} }{\eta^2 \tau_0}\right) \right\rceil \\
		& & + \left(1+\frac{2}{\log(\gamma_1)}\log\left(\frac{\bar{\sigma}_2}{\sigma_{\min}}\right)\right)\left[1 + \left(\frac{C_2}{\varepsilon}\right)^{\frac{1}{3}} \right]
	\end{eqnarray*}
	where $C_2 = 6 \left( \frac{{L_2} + 2 \bar{\sigma}_1 + \tau_0 }{6}D^{3} + 8 \kappa_\theta D^3 \right)$.
\end{theorem}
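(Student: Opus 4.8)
The plan is to treat Theorem~\ref{Thm:second-order} as a direct specialization of the master iteration-complexity result, Theorem~\ref{Thm:iteration-complexity-uaa}. All of the genuine analysis---the two-phase \textsf{SAS}/\textsf{AAS} bookkeeping and the recursive functional estimate $F(\bar\x_j)-F(\x^*)\le C/\prod_{\ell=1}^{p+1}(j+\ell)$---is already carried out there in terms of the abstract effective-approximation constants $\bar\kappa_p,\kappa_p,\beta_p,\rho_p$ and $\kappa_\theta$. Consequently, proving the theorem amounts to two tasks: first, certifying that the exact second-order model $\overline{m}(\y;\x)=\tilde f_2(\y;\x)$ is an effective approximation with the constants listed in the corresponding row of Table~\ref{tab:m-choice}; and second, substituting $p=2$ together with those constants into the general bound.

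For the first task I would verify the three defining properties directly. Property~\eqref{Def:Effective-Objective-All} is immediate from the Taylor residual bound~\eqref{Inequality:Objective-Lipschitz-Continuous} specialized to $p=2$, which gives $|f(\y)-\tilde f_2(\y;\x)|\le \frac{L_2}{6}\|\y-\x\|^3$ and hence $\bar\kappa_2=0$, $\kappa_2=L_2/6$. Because $\tilde f_2(\cdot\,;\x)$ is the genuine second-order Taylor polynomial, the solution-level estimates~\eqref{Def:Effective-Objective-Solution} and~\eqref{Def:Effective-Gradient-Solution} are inherited from the same objective residual and from the gradient residual~\eqref{Inequality:Gradient-Lipschitz-Continuous}, yielding $\beta_2=L_2/6$ and $\rho_2=L_2/2$. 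The only structural point is convexity: since $\nabla^2\tilde f_2(\y;\x)=\nabla^2 f(\x)\succeq 0$ by convexity of $f$, the model is convex in $\y$, so the regularized subproblem is strongly convex and the inexact criterion~\eqref{Criterion:Approximate-Adaptive} is well posed.

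The second task is pure arithmetic. Inserting $p=2$ into the definition of $C$ in Theorem~\ref{Thm:iteration-complexity-uaa} and using $\bar\kappa_2=0$ to annihilate the middle term, together with $(p+1)!=6$, $2(p+1)\kappa_2=L_2$, and $(2D)^{3}=8D^3$, collapses $C$ exactly to the stated $C_2=6\big(\frac{L_2+2\bar\sigma_1+\tau_0}{6}D^3+8\kappa_\theta D^3\big)$. For the iteration count I would substitute $p=2$ into each of the four summands, evaluating $p^{\,p-1}/(p-1)!=2$ in the logarithmic $\tau$-adjustment term and reading off the tail $(C/\varepsilon)^{1/(p+1)}=(C_2/\varepsilon)^{1/3}$, which delivers the advertised $O(\varepsilon^{-1/3})$ rate.

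I do not anticipate any real analytical obstacle, since the convergence theory is entirely inherited from Theorem~\ref{Thm:iteration-complexity-uaa}. The only point requiring care is the bookkeeping of which effective-approximation constant enters the logarithmic term: matching the stated $\frac{L_2}{6}$ (and, consistently, the $\frac{L_1}{2}$ appearing in the first-order Theorem~\ref{Thm:first-order}) requires using the \emph{objective}-residual constant $\kappa_2=L_2/6$ there, so I would double-check that this is indeed the constant governing that term rather than the gradient-residual constant $\rho_2=L_2/2$.
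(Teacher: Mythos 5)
Your proposal is correct and follows exactly the paper's own route: the paper establishes Theorem~\ref{Thm:second-order} precisely by certifying that $\tilde f_2(\cdot\,;\x)$ is an effective approximation with $\bar\kappa_2=0$, $\kappa_2=\beta_2=\frac{L_2}{6}$, $\rho_2=\frac{L_2}{2}$ (convexity following from $\nabla^2 f(\x)\succeq 0$), and then substituting $p=2$ into Theorem~\ref{Thm:iteration-complexity-uaa}, which reproduces your computation of $C_2$ verbatim. Your closing caveat is well taken and points to a genuine inconsistency in the paper's statement rather than in your argument: the logarithmic term of Theorem~\ref{Thm:iteration-complexity-uaa} (inherited from Lemma~\ref{Lemma:AAS-Auxillary}) is governed by the gradient-residual constant $\rho_p$, so honest substitution yields $\frac{4\left(\frac{L_2}{2}+\bar\sigma_2+\kappa_\theta\right)^3}{\eta^2\tau_0}$ rather than the stated $\frac{4\left(\frac{L_2}{6}+\bar\sigma_2+\kappa_\theta\right)^3}{\eta^2\tau_0}$ (the same slip produces the $\frac{L_1}{2}$ in Theorem~\ref{Thm:first-order}, where $\rho_1=L_1$); since this sits inside a logarithm it leaves the $O\left(1/\epsilon^{1/3}\right)$ rate untouched, but the bound as written should carry $\frac{L_2}{2}$.
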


\subsubsection{Inexact Hessian Approximation}\label{Subsection:Inexact-Hessian}
We study the scenario where the Hessian information is possibly not available; instead, we can construct an approximation of the Hessian $\nabla^2 f(\x_i)$ by first computing $d$ forward gradient differences at $\x_i$ with a step size $h_i\in\br$,
\[ A_i = \left[\frac{\nabla f(\x_i + h_i \e_1) - \nabla f(\x_i)}{h_i}, \ldots, \frac{\nabla f(\x_i + h_i \e_d) - \nabla f(\x_i)}{h_i}\right], \]
and symmetrizing the resulting matrix: $\widehat{H}(\x_i)=\frac{1}{2}\left(A_i + A_i^\top\right)$ and then further adding a sufficiently large constant multiple of identity matrix to $\widehat{H}(\x_i)$: $H(\x_i) = \widehat{H}(\x_i) + \kappa_c h_i I$, where $\e_j$ is the $j$-th vector of the canonical basis. It is well known in {Section 7.1 of~\cite{Nocedal-2006-Numerical}} that, for some constant $\kappa_e>0$,  we have
\[\left\|\widehat{H}(\x_i) - \nabla^2 f(\x_i)\right\| \leq \kappa_e h_i.\]
Consequently, it holds that
\[ \left\|H(\x_i) - \nabla^2 f(\x_i)\right\| \leq \left(\kappa_e + \kappa_c\right)h_i. \]
That is to say, the gap between exact and inexact Hessian can be bounded by a multiple of the step size $h_i$. This together with Algorithm 4.1 in~\cite{Cartis-2012-Oracle} motivates a procedure for searching a pair of $\left(h_i, \x_{i+1}\right)$
such that
{
	\begin{equation}\label{kappa-hs}
	h_i \leq \min\{\kappa_{hs}, \kappa_{hs}\left\|\x_{i+1} - \x_i\right\|  \} \quad \mbox{for some} \quad \kappa_{hs}>0.
	\end{equation} 	
	This procedure is adapted from Algorithm 2 in the first version of this paper (\cite{Jiang-2017-Unified}) by replacing the early stop criterion $\|\nabla f(\x_{i+1})\| \le \epsilon$ by $\|\nabla f(\x_{i+1}) + \xi \| \le \epsilon$ with ${\xi} \in\partial r\left(\x_{i+1}\right)$ as we consider composite optimization in this paper. Similar to Lemma 4.1 in \cite{Jiang-2017-Unified}, we can show that one call of this procedure
	requires $O(\log(1/\epsilon))$ number of iterations with $n$ additional gradient computations in each iteration.	Since this procedure is needed in both successful iteration and unsuccessful iteration in the main loop, it will add a logarithmic factor of $1/\epsilon$ to the overall iteration complexity of the method.} Letting $\kappa=\left(\kappa_e + \kappa_c\right) \kappa_{hs}$, we conclude that
\begin{equation}\label{Criterion:inexact-Hessian-stepsize}
\left\|H(\x_i) - \nabla^2 f(\x_i)\right\| \leq \kappa\left\|\x_{i+1} - \x_i\right\|.
\end{equation}
Therefore, we set
$$
\overline{m} (\y; \x) = f(\x) + (\y - \x)^{\top} \nabla f(\x) + \frac{1}{2} (\y - \x)^{\top} H(\x) (\y - \x)
$$
as the second order approximation of $g$ under the inexact Hessian. It follows from \eqref{Inequality:Objective-Lipschitz-Continuous}, \eqref{Inequality:Gradient-Lipschitz-Continuous} and \eqref{Criterion:inexact-Hessian-stepsize} that
\begin{eqnarray*}
	& & \left| f({\x}_{i+1}) - \overline{m}({\x}_{i+1}; \x_i) \right|\\
	& \leq & \left|\frac{1}{2} (\x_{i+1} - \x_i)^{\top} \nabla^2 f(\x_i)(\x_{i+1} - \x_i) -\frac{1}{2} (\x_{i+1} - \x_i)^{\top} H(\x_i)(\x_{i+1} - \x_i) \right| \\
	& & + \left| f({\x}_{i+1}) - f_2(\x_{i+1};\x_i) \right| \\
	& \leq &  \frac{L_2}{6} \|{\x}_{i+1} - \x_i\|^{3} + \frac{\kappa}{2}\|{\x}_{i+1} - \x_i\|^{3} = \frac{L_2 + 3 \kappa}{6} \|{\x}_{i+1} - \x_i\|^{3},
\end{eqnarray*}
and
\begin{eqnarray*}
	&&\left| \nabla f ({\x}_{i+1}) - \nabla \overline{m}({\x_{i+1}}; \x_i)  \right|\\
	&\leq & \left|\nabla f({\x}_{i+1}) -\nabla f_2(\x_{i+1};\x_i) \right| + \left|       \nabla^2 f(\x_i)(\x_{i+1} - \x_i) - H(\x_i)(\x_{i+1} - \x_i) \right| \\
	&\leq&  \frac{L_2}{2} \|{\x}_{i+1} - \x_i\|^{2} + {\kappa}\|{\x}_{i+1} - \x_i\|^{2} = \frac{L_2 + 2 \kappa}{2} \|{\x}_{i+1} - \x_i\|^{2}.
\end{eqnarray*}
Moreover, since $\nabla f(\x)$ is Lipschitz continuous with $L_{1}>0$, we have
\begin{equation*} \label{Inequality:Hessian-Boundedness}
\left\|\nabla^2 f(\x)\right\| \leq L_{1}, \quad \x\in\dom(F),
\end{equation*}
{and thus $\| \widehat{H}(\x) \| \le L_1$, which further implies $\| H(\x) \| \le L_1+ \kappa_c h_i \le L_1+ \kappa_c \kappa_{hs} \le L_1+ \kappa$.}
As a result
\begin{eqnarray*}
	&&\left| f(\y) - \overline{m}(\y; \x) \right|\\
	&\leq & \left| f({\y}) - f_2(\y;\x) \right| + \left|     \frac{1}{2} (\y - \x)^{\top} \nabla^2 f(\x)(\y - \x) -\frac{1}{2} (\y - \x)^{\top} H(\x)(\y - \x) \right| \\
	&\leq&  \frac{L_2}{6} \| \y - \x\|^{3} + \frac{1}{2}\| \y - \x \|^{2} \left(\|\nabla^2 f(\x) \| + \|H(\x) \| \right) \\
	&\leq & \frac{L_2}{6} \| \y - \x\|^{3} + {(L_1 + \kappa)} \| \y - \x\|^{2}.
\end{eqnarray*}
Finally, since $f$ is convex and $\kappa_c$ is sufficiently large such that $\kappa_c\geq\kappa_e$, we have
\[ H(\x_i) \succeq \nabla^2 f(\x_i) - \kappa_e h_i I + \kappa_c h_i I \succeq 0, \]
and $\overline{m}(\y; \x_i)$ is convex as well.
Therefore, all three conditions in  Definition  \ref{Def:Effective-Objective-All} are satisfied with
$$
\beta_2 = \frac{L_2 + 3 \kappa}{6},\quad \quad \rho_2 = \frac{L_2 + 2 \kappa}{2},\quad \kappa_2 = \frac{L_2}{6},\quad \bar \kappa_2={L_1 + \kappa}.
$$
Therefore, we have the following iteration bound.
\begin{theorem}\label{Thm:inexact-second-order} Letting $\overline{m}(\y; \x) = f(\x) + (\y - \x)^{\top} \nabla f(\x) + \frac{1}{2} (\y - \x)^{\top} H(\x) (\y - \x)$ in \textsf{UAA}, we obtain an adaptive accelerating cubic regularized approximate Newton's method, and  the total iteration number of getting an $\epsilon$-optimal solution is
	\begin{eqnarray*}
		& & 2 + \frac{2 \log\left(\frac{\bar{\sigma}_1}{\sigma_{\min}}\right)}{\log(\gamma_1)} + \left\lceil \frac{1}{\log\left(\gamma_3\right)} \log\left(\frac{4\left(\frac{L_2 + 2 \kappa}{2} + \bar{\sigma}_2 + \kappa_{\theta}\right)^{3} }{\eta^2\tau_0}\right) \right\rceil  \\
		& & + \left(1+\frac{2}{\log(\gamma_1)}\log\left(\frac{\bar{\sigma}_2}{\sigma_{\min}}\right)\right)\left[1 + \left(\frac{\bar C_2}{\varepsilon}\right)^{\frac{1}{3}} \right],
	\end{eqnarray*}
	where $\bar C_2 = 6 \left( \frac{{L_2} + 2 \bar{\sigma}_1 + \tau_0 }{6}D^{3} + \, L_1 D^2 + 8 \kappa_\theta D^3 \right)$.
\end{theorem}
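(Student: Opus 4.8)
The proof proceeds in two stages. First I would confirm that the inexact-Hessian model
\[
\overline{m}(\y;\x) = f(\x) + (\y-\x)^\top\nabla f(\x) + \tfrac{1}{2}(\y-\x)^\top H(\x)(\y-\x)
\]
qualifies as an effective approximation in the sense of Definition~\ref{Definition:Approximate-Model} with the four constants $\bar\kappa_2 = L_1$, $\kappa_2 = L_2/6$, $\beta_2 = (L_2+3\kappa)/6$, and $\rho_2 = (L_2+2\kappa)/2$; second, I would specialize the master iteration-complexity bound of Theorem~\ref{Thm:iteration-complexity-uaa} to $p=2$ with these constants. Since this is the inexact case, $\overline{m}(\bullet\,;\x)$ carries an extra dependence on the finite-difference step size, so the verification of condition (ii) falls under the $h$-dependent branch of the definition rather than the solution-independent branch used for the exact Hessian.

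For condition (i), which must hold at an arbitrary $\y$, I would split $f(\y) - \overline{m}(\y;\x)$ into the second-order Taylor residual, controlled by \eqref{Inequality:Objective-Lipschitz-Continuous} at the $\|\y-\x\|^3$ scale, plus the Hessian-mismatch term $\tfrac{1}{2}(\y-\x)^\top(\nabla^2 f(\x)-H(\x))(\y-\x)$. Because condition (i) permits no coupling between $\y$ and the achieved step, I can only bound the mismatch through the crude spectral estimate $\tfrac12(\|\nabla^2 f(\x)\| + \|H(\x)\|) \le L_1$, which produces the lower-order term $L_1\|\y-\x\|^2$ and hence $\bar\kappa_2 = L_1$, $\kappa_2 = L_2/6$. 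For condition (ii), evaluated at the computed iterate $\x_{i+1}$, the key is the step-size coupling $h_i \le \kappa_{hs}\|\x_{i+1}-\x_i\|$ enforced by the search procedure, which upgrades the mismatch bound to \eqref{Criterion:inexact-Hessian-stepsize}; combining this with the Taylor residuals \eqref{Inequality:Objective-Lipschitz-Continuous} and \eqref{Inequality:Gradient-Lipschitz-Continuous} promotes every error term to the $\|\x_{i+1}-\x_i\|^{3}$ (resp.\ $\|\x_{i+1}-\x_i\|^{2}$) scale, giving $\beta_2$ and $\rho_2$. Condition (iii), convexity of $\overline{m}$, follows from $H(\x_i)\succeq 0$, guaranteed once the shift constant satisfies $\kappa_c \ge \kappa_e$.

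With the three conditions in hand, the remaining step is purely arithmetic: set $p=2$ in Theorem~\ref{Thm:iteration-complexity-uaa}. The constant $C$ becomes $(p+1)!=6$ times $\big(\tfrac{6\kappa_2 + 2\bar\sigma_1 + \tau_0}{6}D^3 + \bar\kappa_2 D^2 + \kappa_\theta(2D)^3\big)$; substituting $\kappa_2 = L_2/6$ collapses $6\kappa_2$ to $L_2$, substituting $\bar\kappa_2 = L_1$ gives the $L_1 D^2$ term, and $(2D)^3 = 8D^3$ produces $8\kappa_\theta D^3$, reproducing $\bar C_2$ exactly. Likewise the logarithmic middle term $\tfrac{1}{\log\gamma_3}\log\!\big(\tfrac{2(\rho_p+\bar\sigma_2+\kappa_\theta)^{p+1}p^{p-1}}{\eta^p(p-1)!\tau_0}\big)$ evaluates at $p=2$ to $\tfrac{1}{\log\gamma_3}\log\!\big(\tfrac{4(\rho_2+\bar\sigma_2+\kappa_\theta)^3}{\eta^2\tau_0}\big)$, and inserting $\rho_2 = (L_2+2\kappa)/2$ matches the stated bound.

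The main obstacle is not the substitution but justifying that the $h$-dependent branch of the effective-approximation definition is genuinely compatible with the master theorem. Concretely, I must argue that a pair $(h_i,\x_{i+1})$ with $h_i \le \kappa_{hs}\|\x_{i+1}-\x_i\|$ can always be located, and that the resulting coupling between the approximation accuracy and the achieved step does not disturb the counting of successful versus unsuccessful iterations (the \textsf{SAS}/\textsf{AAS} bookkeeping) on which Theorem~\ref{Thm:iteration-complexity-uaa} rests. Once this compatibility is established, the inexact estimate inherits the master bound verbatim at $p=2$, and the only extra cost relative to the exact-Hessian Theorem~\ref{Thm:second-order} is the appearance of $\kappa$ inside $\beta_2$ and $\rho_2$ and the additional $L_1 D^2$ term inside $\bar C_2$.
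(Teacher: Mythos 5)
Your proposal is correct and follows essentially the same route as the paper: it verifies that the inexact-Hessian model is an effective approximation in the sense of Definition~\ref{Definition:Approximate-Model} with $\bar\kappa_2 = L_1$, $\kappa_2 = L_2/6$, $\beta_2 = (L_2+3\kappa)/6$, $\rho_2 = (L_2+2\kappa)/2$ (same Taylor-plus-mismatch decomposition, same spectral bound for condition (i), same use of the coupling $h_i \le \kappa_{hs}\|\x_{i+1}-\x_i\|$ for condition (ii), and $\kappa_c \ge \kappa_e$ for convexity), and then substitutes $p=2$ into Theorem~\ref{Thm:iteration-complexity-uaa} with exactly the arithmetic the paper performs. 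The compatibility issue you flag at the end is handled no more rigorously in the paper itself, which simply cites Algorithm 4.1 of \cite{Cartis-2012-Oracle} and an earlier version of the manuscript for the existence of the $(h_i,\x_{i+1})$ search procedure.
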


\subsection{High-Order Adaptive Accelerating Method}\label{Sec: High-order-method}
To utilize high-order information, we let
$$
\overline{m}(\y; \x) = \tilde{f}_p(\y;\x).
$$
Then by involking \eqref{Inequality:Objective-Lipschitz-Continuous} and \eqref{Inequality:Gradient-Lipschitz-Continuous},  (i) and (ii) in Definition  \ref{Def:Effective-Objective-All} are satisfied with
$$
\kappa_p = \beta_p = \frac{L_p}{(p+1)!}, \quad \beta_p = \frac{L_p}{p!}, \quad \bar \kappa_p=0.
$$
Unfortunately, $\overline{m}(\y; \x)$ is not necessarily convex in this case. {According to Theorem~1 in \cite{Nesterov-2018},
	$$
	\overline{m}(\y; \x)+ \frac{\sigma \left\|\y - \x\right\|^{p+1}}{p+1}
	$$
	is a convex function when $\sigma \ge \frac{L_p}{(p-1)!}$.
	However, the choice of $\sigma$ is dependent on the problem parameter $L_p$. Moreover, checking the convexity of a polynomial function is NP hard in general \cite{ahmadi2013np}, and it remains a challenging task even when the polynomial is well structured (for instance, a sum of squares \cite{jiang2017cones}).
	Fortunately, as shown in the proof of Theorem \ref{Thm:base-case}, only the convexity of
	$$
	m(\y; {\x_0}, \sigma) = \overline{m}(\y; {\x_0})+ \frac{\sigma \left\|\y - {\x_0}\right\|^{p+1}}{p+1} + r(\y)
	$$
	at point $\bar {\x}_0$ suffices to get an upper bound of $\psi_0(\z, \tau_0)$, where $\bar {\x}_0$ is the output %returned
	solution of \textsf{SAS}.
	Note that
	\begin{eqnarray}
	\nabla^{2}\left(\frac{\sigma \left\|\y - \x\right\|^{p+1}}{p+1}\right) &=& \sigma(p-1)\|\y - \x\|^{p-3}(\y - \x)(\y - \x)^{\top} + \sigma \|\y - \x\|^{p-1} I \nonumber \\
	& \succeq& \sigma \|\y - \x\|^{p-1} I. \label{Hessian-p-power}
	\end{eqnarray}
	Therefore,
	$m(\y; {\x_0}, \sigma)$ is convex at $\bar{\x}_0$ as long as $\sigma \ge -\lambda_{\min}\left(\nabla^2\overline{m}(\bar{\x}_0;{{\x_0}}) \right)/\|\bar{\x}_0 - {\x_0}\|^{p-1}$. {Recall that} $\sigma^{SAS}$ is the adaptive regularizing parameter associated with $(\bar{\x}_0;{\x})$. Then we can let the input adaptive parameter of \textsf{AAS} be
	\begin{equation}\label{sigma-0-AAS}
	\sigma_0^{AAS}=\max\{\sigma^{SAS},  -\lambda_{\min}\left(\nabla^2\overline{m}(\bar{\x}_0;{{\x_0}}) \right)/\|\bar{\x}_0 - {\x_0}\|^{p-1}\}
	\end{equation}
	to guarantee the convexity of $m(\y; {\x}_0, \sigma_0^{AAS})$ at $\bar{\x}_0$. Moreover, we have that $F(\bar{\x}_0) < m(\bar{\x}_0; {\x}_0, \sigma^{SAS}) \le  m(\bar{\x}_0; {\x}_0, \sigma_0^{AAS})$, and so $\bar{\x}_0$ is still a successful iterate in \textsf{SAS}.
	%other parts of the algorithm remain unchanged.
	In practice, we may further add a small positive number to $\sigma_0$ to rid %exclude the
	ill-conditioning caused by the numerical error when computing $-\lambda_{\min}\left(\nabla^2\overline{m}(\bar{\x}_0;{\x}) \right)/\|\bar{\x}_0 - \x\|^{p-1}$. Finally, we
	arrive at the following iteration bound.
}

%is convex when $\sigma \ge L_p(p-1)!$. Recall that the convexity of $m(\y; \x, \sigma)$ at point $\bar {x}_0$ is only used in the proof of Theorem \ref{Thm:base-case} to get an upper bound of
%$\psi_0(\z, \tau_0)$. Generally speaking, checking the convexity of a polynomial function could be very hard \cite{ahmadi2013np}, and it remains a challenging task even when such polynomial equipped with some structure (for instance, sum of squares) \cite{jiang2017cones}. {However, checking the convexity at one point. Note that
%}
%However, checking the convexity at one point is computationally tractable, which boils down to verifying the positive semidefiniteness of the Hessian matrix.
%This motivates us to add
%checking if $m(\x_{i+1}; \x_0, \sigma_i)=m(\x_{i+1}; \x_i, \sigma_i)$ is convex as another successful criterion in \textsf{SAS}, in addition to the original one $F(\x_{i+1}) \ge m\left(\x_{i+1}; \x_i, \sigma_i\right)$. Therefore, when
%$$
%\sigma_i \geq \max\{(p+1) \beta_p, L_p(p-1)!  \} = \max\left\{\frac{L_p}{p!}, L_p(p-1)!  \right\} = L_p(p-1)!,
%$$
%both criteria are satisfied. Therefore, we only need to modify
%\[ \bar{\sigma}_1= \max\left\{\sigma_0,  \  \gamma_2 L_p(p-1)! \right\}, \]
%and keep the other parameters unchanged.
%Therefore, we have the following iteration bound.
\begin{theorem}\label{Thm:d-order} Letting $\overline{m}(\y; \x) = \tilde{f}_p(\y;\x)$ in \textsf{UAA}, we obtain an adaptive accelerating $p$-th order method, and  the total iteration number of getting an $\epsilon$-optimal solution is
	\begin{eqnarray*}
		& & 2 + \frac{2}{\log(\gamma_1)}\log\left(\frac{\bar{\sigma}_1}{\sigma_{\min}}\right) + \left\lceil \frac{1}{\log\left(\gamma_3\right)} \log\left(\frac{2\left(\frac{L_p}{(p+1)!} + \bar{\sigma}_2 + \kappa_{\theta}\right)^{p+1} p^{p-1}}{\eta^p (p-1)!\tau_0}\right) \right\rceil  \\
		& & + \, \left(1+\frac{2}{\log(\gamma_1)}\log\left(\frac{\bar{\sigma}_2}{\sigma_{\min}}\right)\right)\left[1 + \left(\frac{C_p}{\varepsilon}\right)^{\frac{1}{p+1}} \right]
	\end{eqnarray*}
	where
	\[ C_p = (p+1)! \left( \frac{\frac{2 L_p}{p!} + 2{\hat{\sigma}_1} + {\hat \sigma_2}}{2(p+1)}D^{p+1}  +  (\kappa_\theta + {\hat{\sigma}_1} )  (2D)^{p+1} \right) . \]
	
\end{theorem}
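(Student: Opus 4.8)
The plan is to obtain Theorem~\ref{Thm:d-order} as a direct specialization of the master complexity bound in Theorem~\ref{Thm:iteration-complexity-uaa}. The work splits into two parts: (a) verifying that the choice $\overline{m}(\y;\x) = \tilde{f}_p(\y;\x)$ qualifies as an effective approximation in the sense of Definition~\ref{Def:Effective-Objective-All} and pinning down the constants $\bar{\kappa}_p,\kappa_p,\beta_p,\rho_p$; and (b) reconciling the possible non-convexity of $\tilde{f}_p$ with the convexity requirement (iii), which is the only genuinely new difficulty here. Once both are settled, the three additive pieces of the final count follow by plugging the constants into the master bound.

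For part (a) I would read off properties (i) and the first inequality of (ii) directly from the Taylor residual bound \eqref{Inequality:Objective-Lipschitz-Continuous}, which yields $\bar{\kappa}_p = 0$ and $\kappa_p = \beta_p = L_p/(p+1)!$; the gradient residual bound \eqref{Inequality:Gradient-Lipschitz-Continuous} supplies the second inequality of (ii) with $\rho_p = L_p/p!$. Substituting these into the definition of $C$ in Theorem~\ref{Thm:iteration-complexity-uaa} and using $\bar{\kappa}_p = 0$ annihilates the term $\bar{\kappa}_p D^p$ and converts $2(p+1)\kappa_p$ into $2L_p/p!$, reproducing exactly the stated $C_p$.

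The crux is part (b). Unlike the first- and second-order instances, $\tilde{f}_p(\y;\x)$ need not be convex, so property (iii) can fail. The resolution rests on two observations. First, by Theorem~1 of \cite{Nesterov-2018}, the regularized model $m(\y;\x,\sigma) = \tilde{f}_p(\y;\x) + \sigma\|\y-\x\|^{p+1}/(p+1) + r(\y)$ is convex as soon as $\sigma \ge L_p(p-1)!$. Second, an inspection of the proof of Theorem~\ref{Thm:base-case} reveals that convexity of $m$ is invoked at a single point only, namely to upper bound $\psi_0(\z,\tau_0)$. Since certifying convexity at one point reduces to a tractable positive-semidefiniteness test on the Hessian, my strategy is to augment \textsf{SAS} with this pointwise check as an additional criterion for an iteration to be declared successful, alongside the original acceptance test of \textsf{SAS}.

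Finally I would argue that this augmentation costs nothing asymptotically. Because $\sigma_i$ is inflated geometrically by a factor of at least $\gamma_1$ on every unsuccessful step, after finitely many steps it exceeds $\max\{(p+1)\beta_p,\,L_p(p-1)!\} = L_p(p-1)!$, at which point the descent test and the new convexity test hold simultaneously; hence the realized regularization parameter remains bounded by $\bar{\sigma}_1 = \max\{\sigma_0,\ \gamma_2 L_p(p-1)!\}$. Replacing the generic $\bar{\sigma}_1$ by this value and leaving every other constant in Theorem~\ref{Thm:iteration-complexity-uaa} untouched then delivers the claimed bound. The main obstacle is thus entirely contained in part (b): isolating \emph{where} convexity is actually used, so that a cheap pointwise semidefiniteness check can stand in for global convexity, which is in general NP-hard to certify.
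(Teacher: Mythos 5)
Your proposal is correct and follows essentially the same route as the paper: you verify the constants $\bar{\kappa}_p=0$, $\kappa_p=\beta_p=L_p/(p+1)!$, $\rho_p=L_p/p!$ from the Taylor residual bounds, invoke Theorem~1 of \cite{Nesterov-2018} to get convexity of the regularized model for $\sigma \ge L_p(p-1)!$, observe that convexity is needed only pointwise in the proof of Theorem~\ref{Thm:base-case}, add the tractable pointwise positive-semidefiniteness check as an extra success criterion in \textsf{SAS}, and update $\bar{\sigma}_1 = \max\{\sigma_0,\ \gamma_2 L_p(p-1)!\}$ before substituting into Theorem~\ref{Thm:iteration-complexity-uaa}. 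This is precisely the paper's argument, including the computation $\max\{(p+1)\beta_p,\ L_p(p-1)!\} = L_p(p-1)!$.
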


\section{Numerical Experiments}\label{section:experiment}
In this section, we present the results of some numerical experiments for solving the $\ell_1$/$\ell_2$-regularized logistic regression problems. { The reason for this choice is that the logistic loss function is known to be convex, and the corresponding $\ell_1$/$\ell_2$-regularized problems are convex as well. Thus, the proposed methods are directly applicable. Moreover, these two problems are common in testing the performance of various second-order methods in the literature; see \cite{Ghanbari-Scheinberg-2016-Practical,Scheinberg-2016-Practical} for $\ell_1$-regularized problem and \cite{Berahas-Bollapragada-Nocedal-2020, Bollapragada-Byrd-Nocedal-2019} for $\ell_2$-regularized problem. All the experiments are conducted on a MacBook Pro with Mac OS High Sierra 10.13.6, a Intel i5 2.6GHz CPU  and 16GB memory.}
\subsection{$\ell_2$-Regularized Logistic Regression Problem}
We first test the performance of the algorithms by evaluating the following $\ell_2$-regularized logistic regression problem
\begin{equation}\label{Prob:GGLR}
\min_{\x\in \br^d} \ f(\x) = \frac{1}{n}\sum\limits_{i=1}^n \ln \left( 1+\exp\left(-b_i \cdot {\sa}_i^\top \x\right) \right) + \frac{\lambda}{2}\| \x\|^2
\end{equation}
where $(\sa_i,b_i)_{i=1}^n$ is the samples in the data set, and the regularization parameter is set as $\lambda=10^{-5}$. To observe the acceleration,  the starting point is randomly generated from a Gaussian random variable with zero mean and a large variance (say $5000$). In this way, initial solutions are likely to be far away from the global solution.
\begin{table}[!t]
	\caption{Statistics of datasets for $\ell_2$-regularized logistic regression.}
	\begin{center}
		\begin{tabular}{|c|c|c|} \hline
			Dataset & $n$ & $d$ \\ \hline
			\textsf{a9a} & 32,561 & 123 \\
			\textsf{phishing} & 11,055 & 68 \\
			\textsf{sonar} & 208 & 60 \\
			\textsf{svmguide3} & 1,243 & 22 \\
			\textsf{w8a} & 49,749 & 300 \\
			\textsf{SUSY} & 5,000,000 & 18 \\ \hline
		\end{tabular}\vspace{-1em}
	\end{center}
	\label{Table: dataset}
\end{table}

We implement a variant of Algorithm \ref{Algorithm:Framework} with cubic regularization, referred to as {\it Adaptively Accelerated Cubic Regularized}\/ (AARC) Newton's method. In this variant we {set $\sigma_0 = \tau_0 = 1$, $\sigma_{\min} = 10^{-16}$, $\kappa_\theta=0.1$, $\gamma_1=\gamma_2=\gamma_3=2$ and $\eta = 0.01$. We first run Algorithm \ref{Algorithm:Framework} and switch to the adaptive cubic regularization phase of Newton's method (ARC) in \cite{Cartis-2011-Adaptive-I, Cartis-2011-Adaptive-II} when the iterates are getting close to the global optimum. In particular, the switch is activated after $10$ successful iterations of {\it Accelerated Adaptive Subroutine}\/ are performed and the progress made by each iteration is small, i.e., $\frac{\left| f(\x_{k+1})-f(\x_k) \right|}{\left| f(\x_k)\right|} \leq 0.1$. The final stopping criterion is set to be $\left\|\nabla f(\x)\right\|\leq 10^{-9}$ after switching to the ARC phase.} In the implementation, we apply the so-called Lanczos process to approximately solve the subproblem $\min_{\y \in \br^d} m(\y;\x_i,\sigma_i)$. In addition to \eqref{Criterion:Approximate-Adaptive}, the approximate solution $\s$ is also made to satisfy
\begin{equation}\label{AM:1}
(\y - \x_i)^\top \nabla f(\x_i) + (\y - \x_i)^\top \nabla^2 f(\x_i) (\y - \x_i) + \sigma\left\|\y - \x_i\right\|^3 = 0
\end{equation}
for given $\x_i$ and $\sigma_i$. Note that \eqref{AM:1} is a consequence of the first order necessary condition, and as shown in Lemma 3.2 \cite{Cartis-2011-Adaptive-I}, the global minimizer of $m(\y;\x_i, \sigma_i)$ when restricted to a Krylov subspace $$\KCal := \text{span}\{\nabla f(\x_i), \nabla^2 f(\x_i) \nabla f(\x_i), \left( \nabla^2f(\x_i) \right)^2 \nabla f(\x_i), \ldots\}$$ satisfies \eqref{AM:1} independent of the subspace dimension.
Minimizing $m(\y;\x_i,\sigma_i)$ in the Krylov subspace is also computationally favorable, as it can be done at the cost of $O(d)$ involving only factorizing a tri-diagonal matrix. Thus, the associated approximate solution can be found through the so-called Lanczos process, where the dimension of $\KCal$ is gradually increased and an orthogonal basis of each subspace $\KCal$ is built up which typically involves one matrix-vector product.  Condition \eqref{Criterion:Approximate-Adaptive} can be used as the termination criterion for the Lanczos process in the hope to find a suitable trial step before the dimension of $\KCal$ approaches $d$.

We compare the new AARC method with {4 other methods including: the adaptive cubic regularized Newton's method (ARC), the trust region method (TR), the limited memory Broyden-Fletcher-Goldfarb-Shanno method (L-BFGS), and the adaptive gradient method (AGD). We adopt the implementation of ARC and TR in the public package\footnote{https://github.com/dalab/subsampled{\_}cubic{\_}regularization} with the default parameters except the full rather than the subsampled batch of the component functions is taken in ARC and the upper bound on the radius of trust region in TR is set to be $10^{4}$. To implement the L-BFGS method, we use the Wolfe conditions to perform the line search and set the descent parameters in Armijo rule,  the curvature condition and the memory size as $0.01$, $0.9$ and $50$ respectively. AGD is implemented based on AdaGrad in \cite{Duchi-2011-Adaptive}.}
The experiments are conducted on 6 \textsf{LIBSVM Sets} \footnote{https://www.csie.ntu.edu.tw/\~{}cjlin/libsvm/} for binary classification, and the summary of those datasets are shown in Table \ref{Table: dataset}.

The results in Figure \ref{fig1} and Figure \ref{fig2} confirm that AARC indeed accelerates ARC, especially when the current iterate has not entered the local region of quadratic convergence yet. Furthermore, AARC outperforms other methods in both computational time and iterations counts in {the given datasets}. Compared to TR with a local constrained quadratic model, AARC achieves more progress and cheaper per-iteration cost at each iteration because of the advantage of a unconstrained local cubic approximation model~\eqref{Definition:Approximate-Model} and the flexible stopping criterion~\eqref{Criterion:Approximate-Adaptive}; compared to L-BFGS, AARC suffers from relatively higher per-iteration cost but its solution can achieve higher accuracy.
\begin{figure}[!t]
	\includegraphics[width=0.48\textwidth, height=4.8cm]{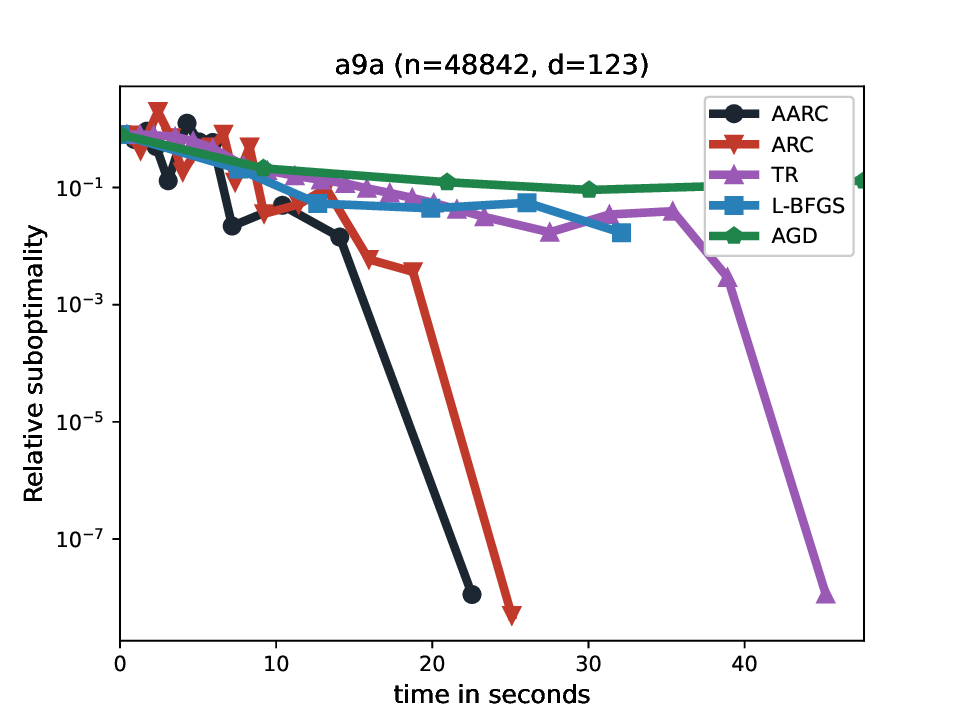}
	\includegraphics[width=0.48\textwidth, height=4.8cm]{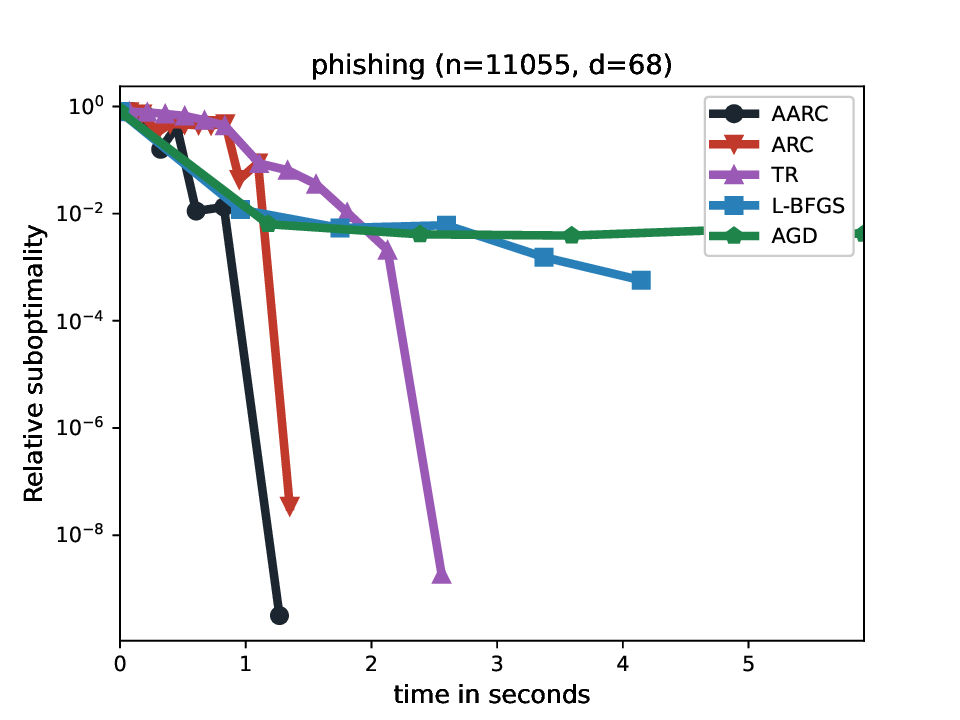}
	\includegraphics[width=0.48\textwidth, height=4.8cm]{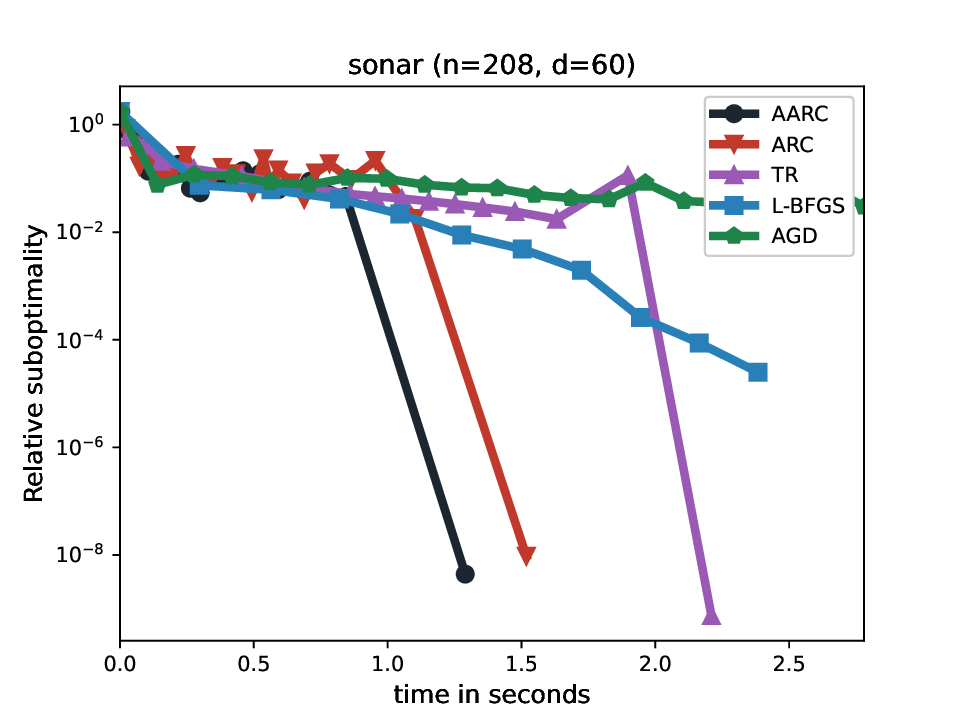}
	\includegraphics[width=0.48\textwidth, height=4.8cm]{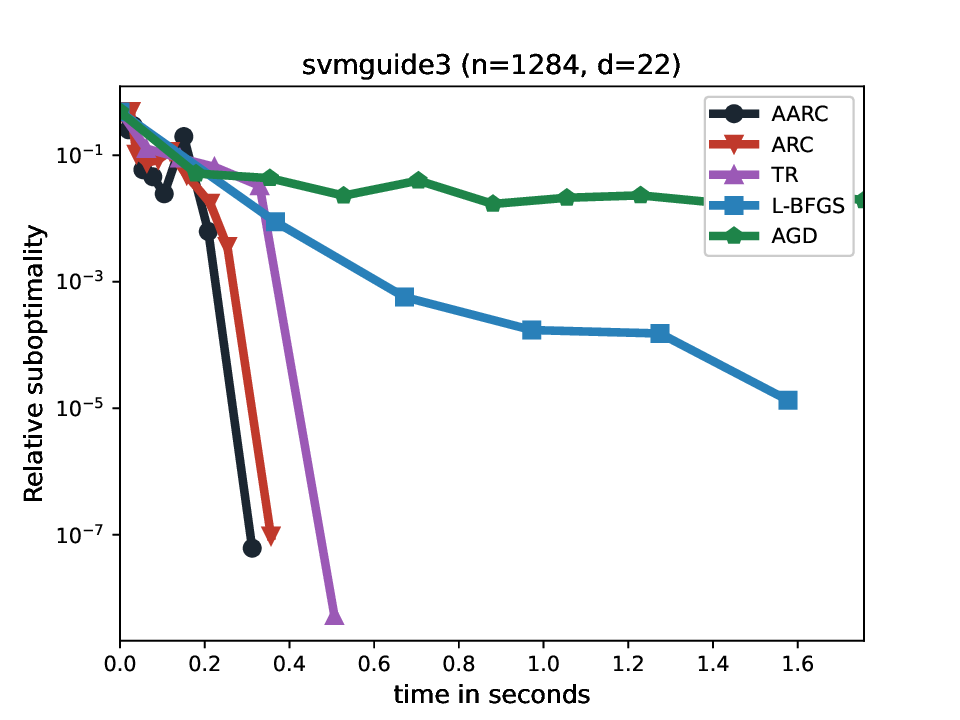}
	\includegraphics[width=0.48\textwidth, height=4.8cm]{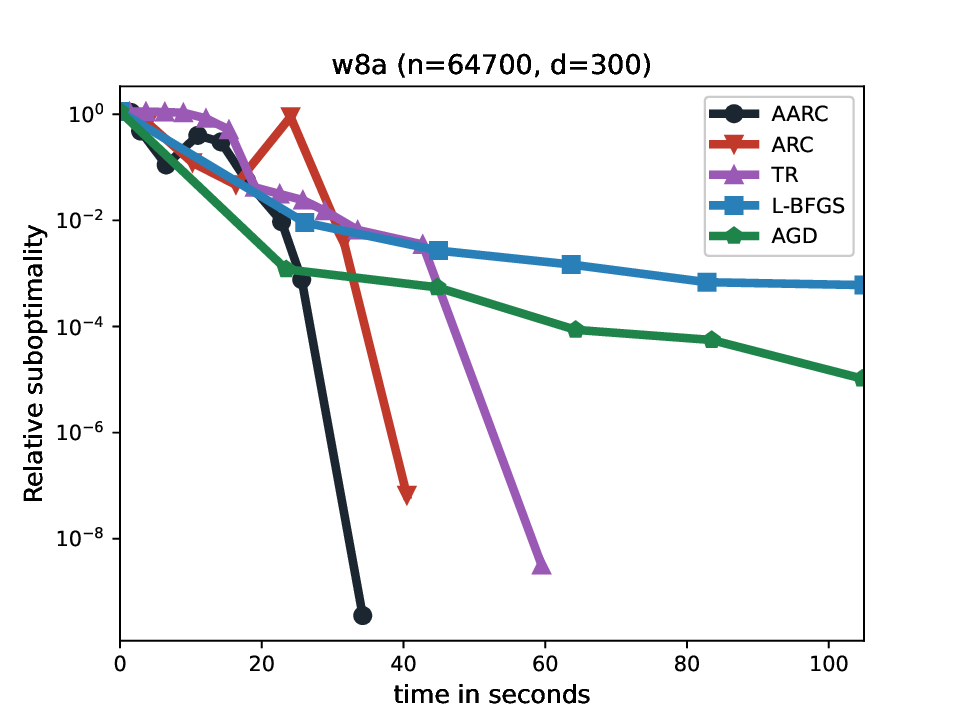} \quad
	\includegraphics[width=0.48\textwidth, height=4.8cm]{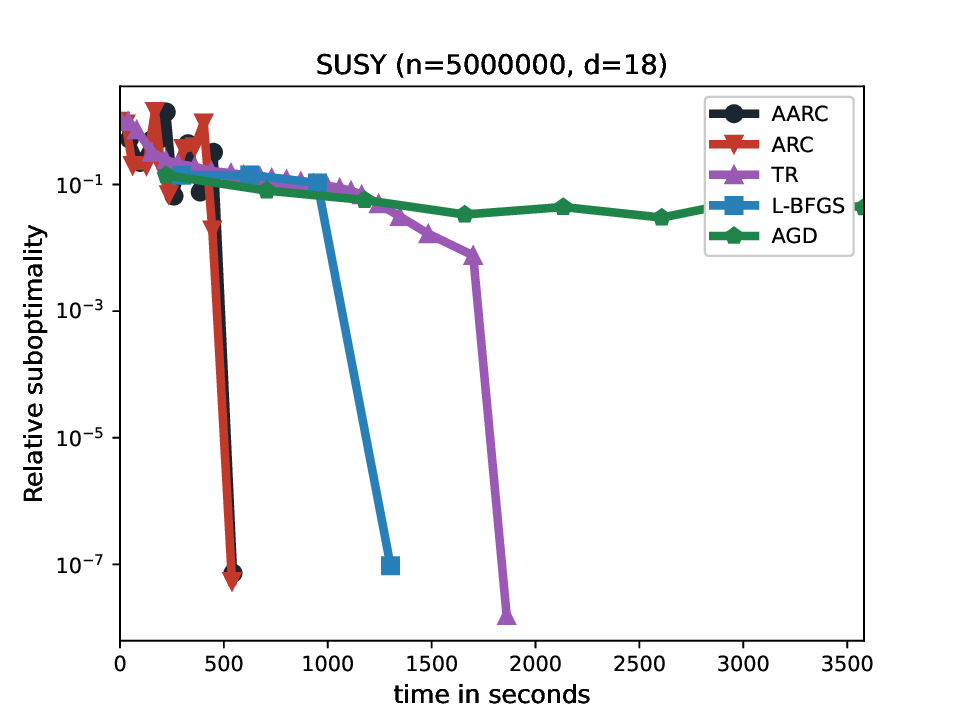}
	\caption{Performance of AARC and all benchmark methods on the task of $\ell_2$-regularized logistic regression  (loss vs.\ time)}
	\label{fig1}
	%\vspace*{-4em}
\end{figure}
%In addition, L-BFGS might get stuck early on some datasets, e.g., sonar, splice and w8a.
%The iteration counts of various second-order methods to reach certain accuracy level are provided in Table \ref{tab:iterations}, where `$\star$' indicates that the method cannot reach the corresponding loss level. For fair comparison an iteration counts the number of subproblems solved by each algorithm. From Table \ref{tab:iterations}, we can see that AARC enters the local fast convergence region earlier than other methods in most cases.
%-----------------------------------------------------------------------
\begin{figure}[!t]
	\includegraphics[width=0.48\textwidth, height=4.8cm]{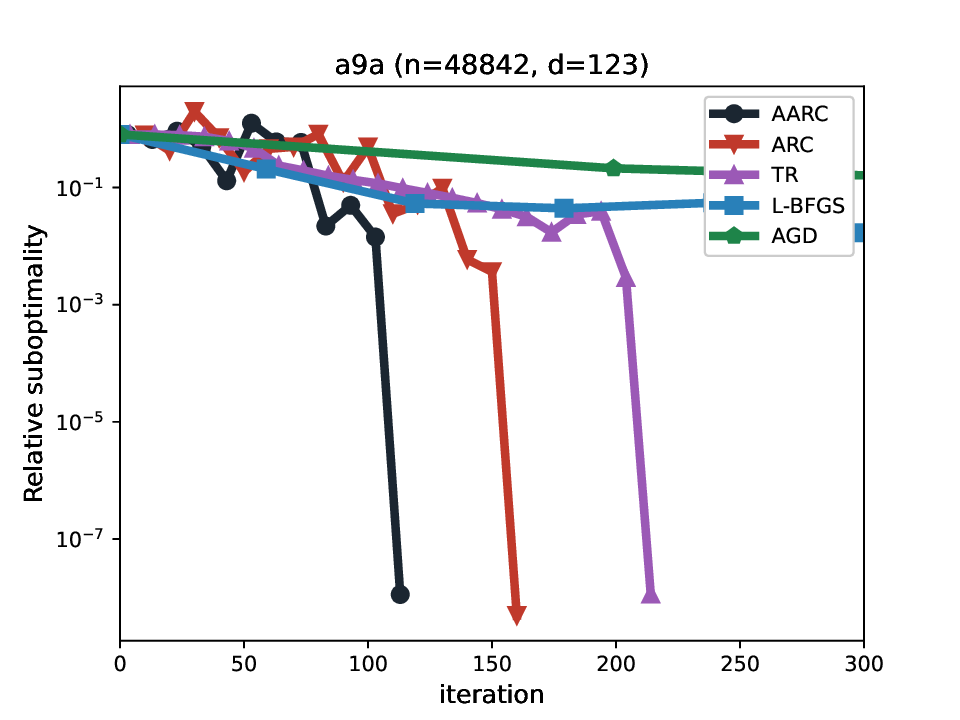}
	\includegraphics[width=0.48\textwidth, height=4.8cm]{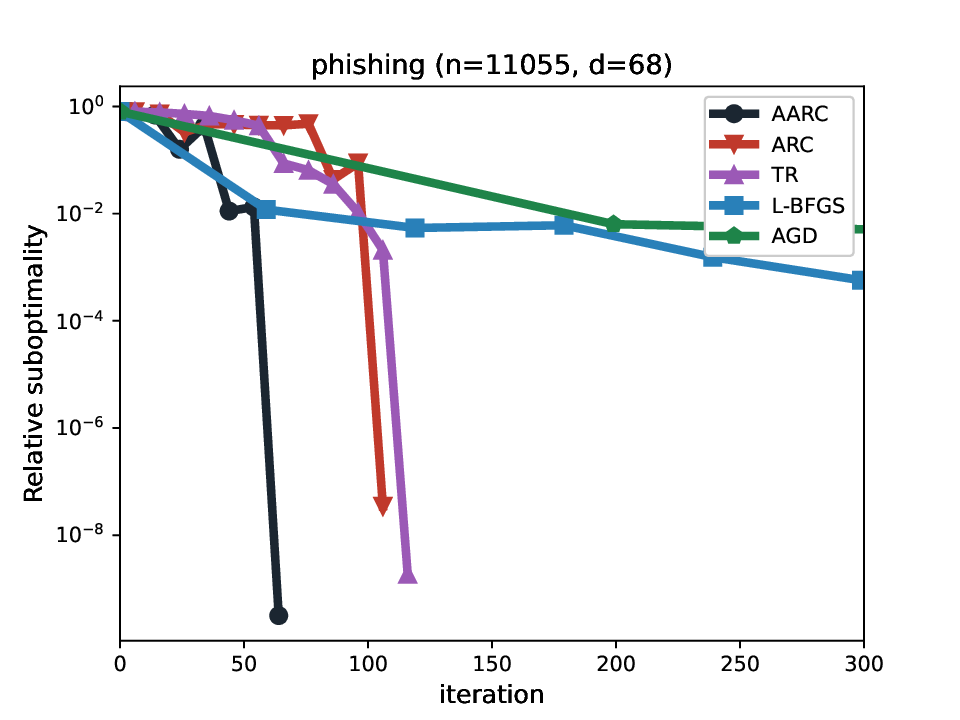}
	\includegraphics[width=0.48\textwidth, height=4.8cm]{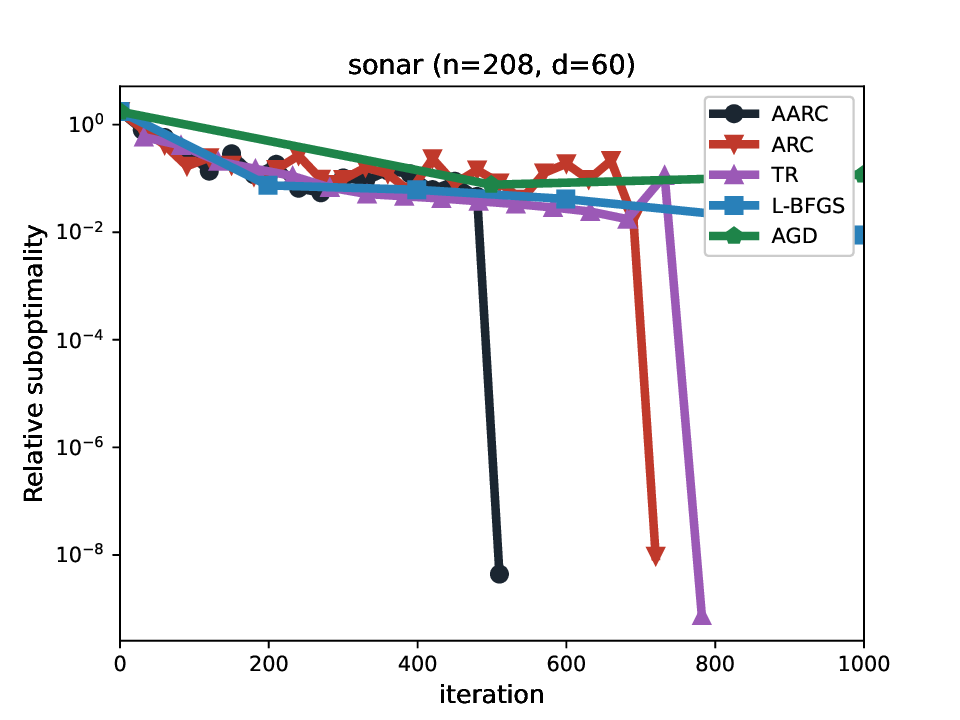}
	\includegraphics[width=0.48\textwidth, height=4.8cm]{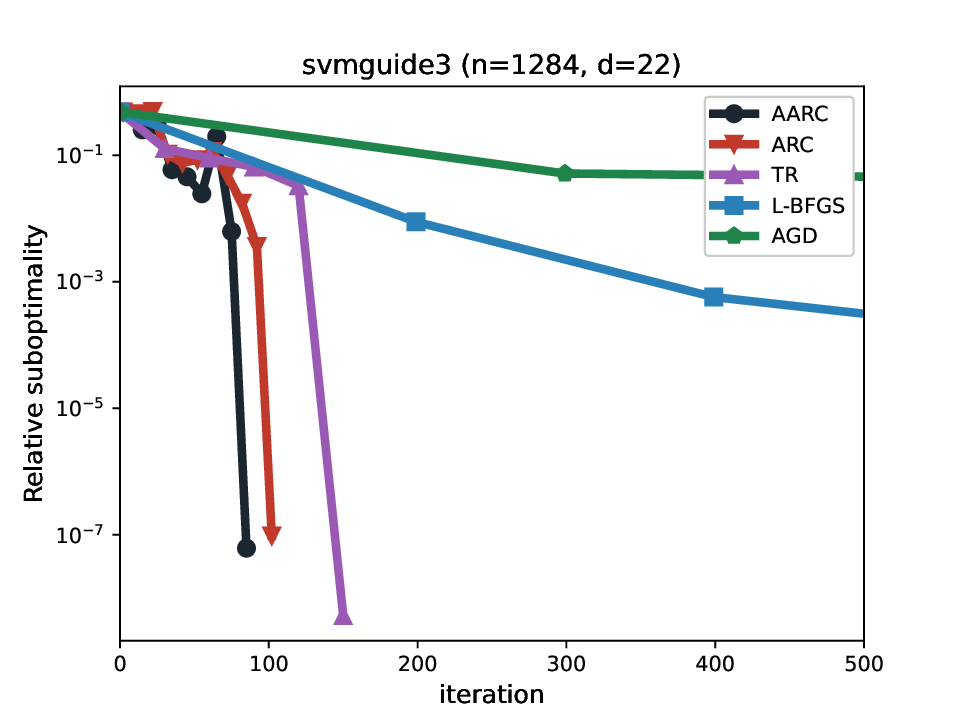}
	\includegraphics[width=0.48\textwidth, height=4.8cm]{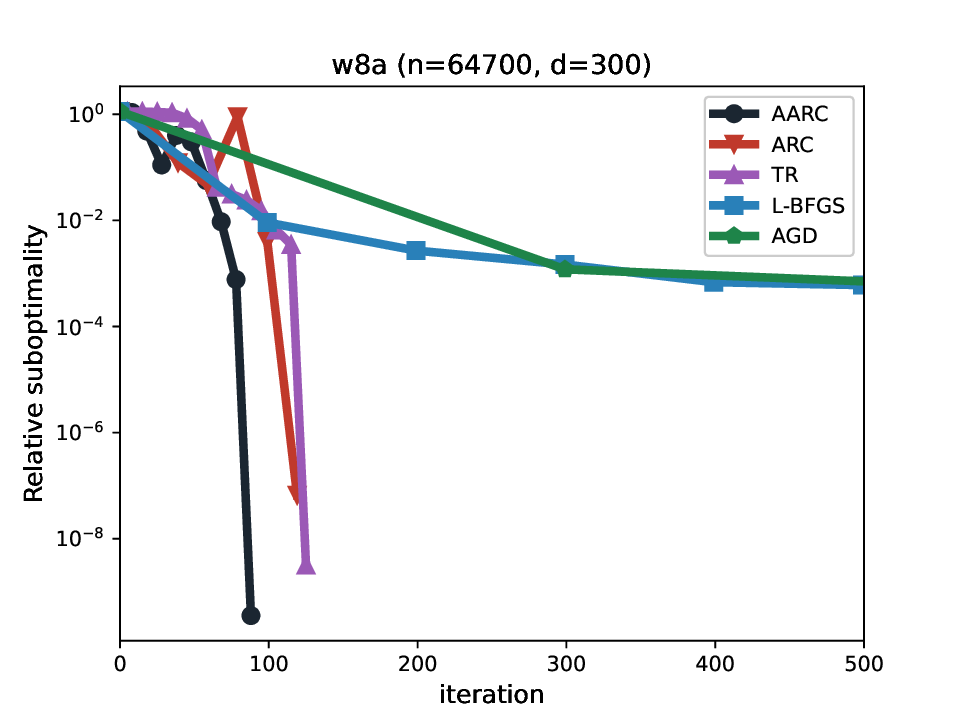} \quad
	\includegraphics[width=0.48\textwidth, height=4.8cm]{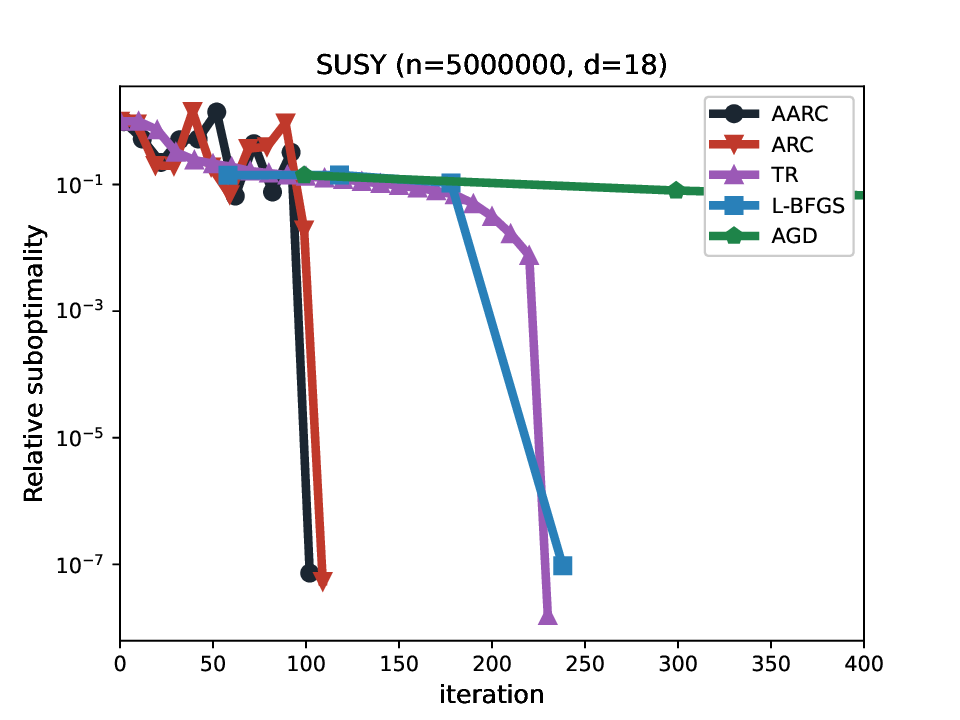}
	\caption{Performance of AARC and all benchmark methods on the task of $\ell_2$-regularized logistic regression (loss vs.\ iterations)}
	\label{fig2}
	%\vspace*{-2em}
\end{figure}

\subsection{$\ell_1$-Regularized Logistic Regression Problem}\label{Subsec:l1-numerical}
Then we test the algorithms on the following $\ell_1$-regularized logistic regression problem:
\begin{equation}\label{prob:logistic-regression}
\min_{\x\in\br^d} \ \frac{1}{n}\sum_{i=1}^n \log\left(1+\exp(-y_i \w_i^\top\x)\right) + \lambda\left\|\x\right\|_1,
\end{equation}
where $\{\left(\w_i, y_i\right)\}_{i=1}^n$ is
a collection of data samples with $y_i \in \{-1, 1\}$ being the label. The regularization term $\left\|\x\right\|_1$ promotes sparse solutions and $\lambda>0$ balances sparsity with goodness-of-fit and generalization. In addition, $\lambda$ was chosen by \textsf{LIBLINEAR} with five-fold cross validation. The experiments are conducted on 3 data sets that all come from \textsf{LIBSVM}\footnote[1]{The collection is available at https://www.csie.ntu.edu.tw/$\sim$cjlin/libsvmtools/datasets}, and the summary of those datasets are shown in Table \ref{Table:dataset2}.

We first test how the inexactness of the Hessian matrix affects the performance of adaptive accelerated proximal cubic regularization of Newton method (AARC) on $\ell_1$-regularized logistic regression problem~\eqref{prob:logistic-regression}. In particular, we implement inexact AARC with different values of $\kappa_{hs}$ in \eqref{kappa-hs} and set the step size to construct the approximated Hessian as $h_i = \min\{ \kappa_{hs}, \kappa_{hs}\left\|\x_{i} - \x_{i-1}\right\| \}$, where $\left\|\x_{i} - \x_{i-1}\right\|$ is the size of difference of the last two consecutive iterates. We plot relative suboptimality versus iteration counts on all datasets in Figure \ref{fig4}, but we do not plot the figures regarding the run-time, as all the methods in Figure \ref{fig4} solve similar subproblems and the run-time is proportional to the iteration counts. Figure \ref{fig4} indicates that inexact AARC works well in general, and the corresponding iteration complexity decreases as the value of $\kappa_{hs}$ decreases, which makes sense and implies that
more accuracy of the Hessian leads to faster convergence of the proposed algorithm. Note that we do not choose a very small value of $\kappa_{hs}$ because if we do then the corresponding curves will be very close to that of exact AARC, making it hard to distinguish the two curves.

\begin{table}[!t]
	\begin{center}\vspace*{-.5em}
		\begin{tabular}{|r|c||ccc|c|} \hline
			Name & Description & $n$ & $d$ & Scaled Interval & $\lambda$ \\ \hline
			\textsf{a9a} & UCI adult & 48842 & 123 & $\left[0, 1\right]$ & 4.5e-03 \\
			\textsf{covetype} & forest covetype & 581012 & 54 & $\left[0, 1\right]$ & 2.6e-03 \\
			\textsf{w8a} & - & 64700 & 300 & $[0,1]$ & 7.0e-04 \\ \hline
		\end{tabular}
	\end{center}
	\caption{Statistics of datasets for $\ell_1$-regularized logistic regression.}\vspace*{-2em}\label{Table:dataset2}
\end{table}

We compare the AARC with Nesterov's accelerated gradient method (Nesterov83) (adapted for composite optimization), fast iterative shrinkage thresholding algorithm (FISTA)~\cite{Beck-2009-Fast}, and the accelerated regularized Newton methods proposed by Grapiglia and Nesterov (GN)~\cite{Grapiglia-Nesterov-2018} on $\ell_1$-regularized logistic regression problem~\eqref{prob:logistic-regression}. We use the TFOCS\footnote{http://cvxr.com/tfocs/} implementation with default parameter settings for Nesterov83 and FISTA. Note that Nesterov83 and FISTA have different coefficients on the momentum term, and their numerical performances would behave differently as shown in Figure \ref{fig3}. For AARC, we use the same setting as that for the $\ell_2$-regularized logistic regression problem, e.g., $\sigma_0=1$, $\sigma_{\min}=10^{-16}$, $\kappa_\theta=0.1$, $\gamma_1=\gamma_2=\gamma_3=2$ and $\eta = 0.01$. The difference is that we adopt FISTA~\cite{Beck-2009-Fast} to solve the subproblem in AARC, as the subproblem itself is a convex composite optimization problem. The maximum number of iterations for solving those subproblems is 500 and the parameter setting for FISTA is default. Finally, we manage to implement the accelerated regularized Newton methods (GN) in~\cite{Grapiglia-Nesterov-2018} with two minor modifications: (i) the subproblem in GN is approximately solved with the stopting criterion \eqref{Criterion:Approximate-Adaptive}, where we set $\kappa_\theta = 10^{-20}$ such that the subproblem is almost solved exactly; (ii) the nonsmooth objective function in the auxiliary function in GN is replaced by its subgradient to avoid computing another proximal mapping by iterative algorithms for computational efficiency otherwise the per-iteration cost will be doubled.

We plot relative suboptimality versus iteration counts as well as relative suboptimality versus time on all datasets in Figure \ref{fig3}. It is clear in Figure \ref{fig3} that our method consistently outperforms Nesterov83 and FISTA in terms of the number of iterations and the overall computational time although the subproblem in AARC does not have a closed-form solution and is much more time-consuming to solve, which is in contrast with that of accelerated first order methods. { Compared to the accelerated second order method GN, AARC has slightly smaller
	iteration counts. This is possibly due to the dynamic adjustment of the adaptive parameter $\tau_{j+1}$ of the auxiliary function $\psi_{j+1}(\z, \tau_{j+1})$ in our AAS subroutine, while
	similar parameter in GN is updated by solving a certain univariate polynomial equation. Besides the slight difference in the iteration counts, GN is also more time consuming per iteration as it needs to solve the subproblem more accurately.}
\begin{figure}[!t]
	\hspace*{-2em}\includegraphics[width=0.37\textwidth, height=3.8cm]{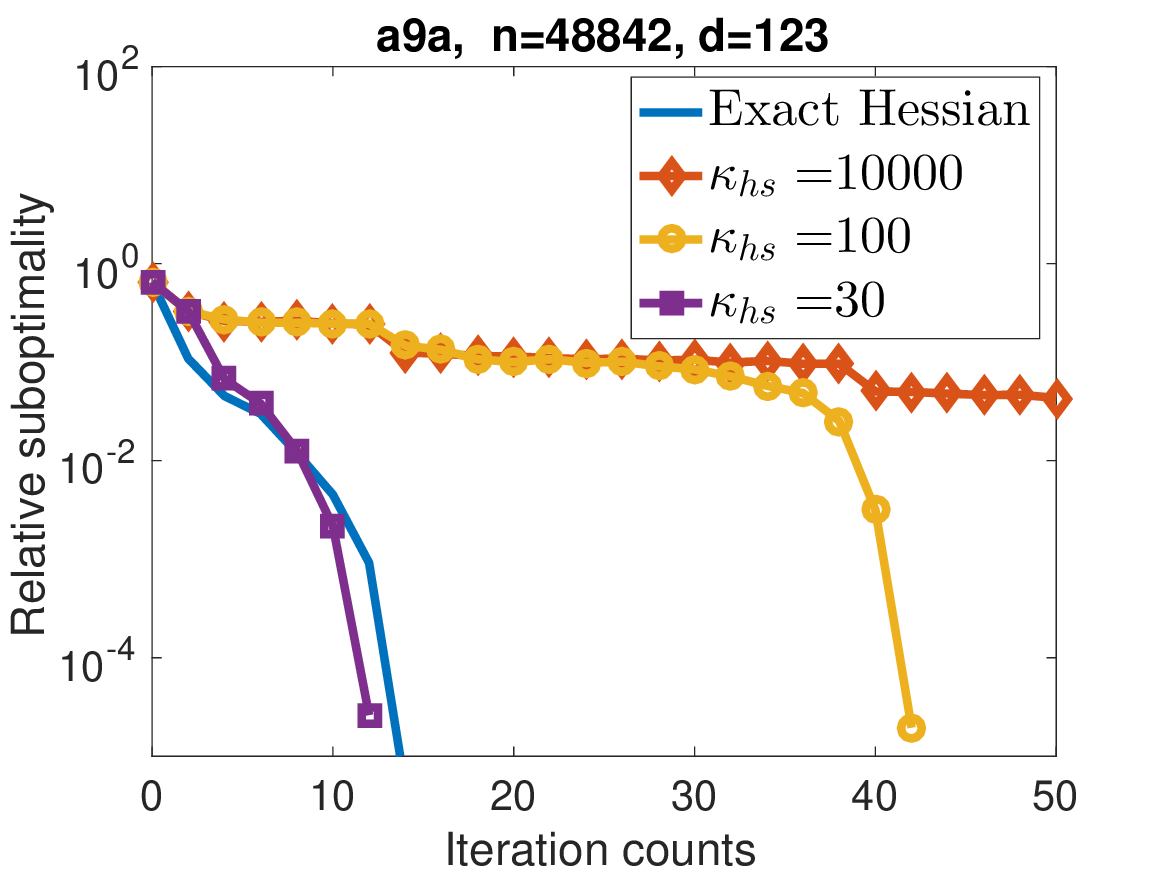}
	\hspace*{-1.5em}\includegraphics[width=0.37\textwidth, height=3.8cm]{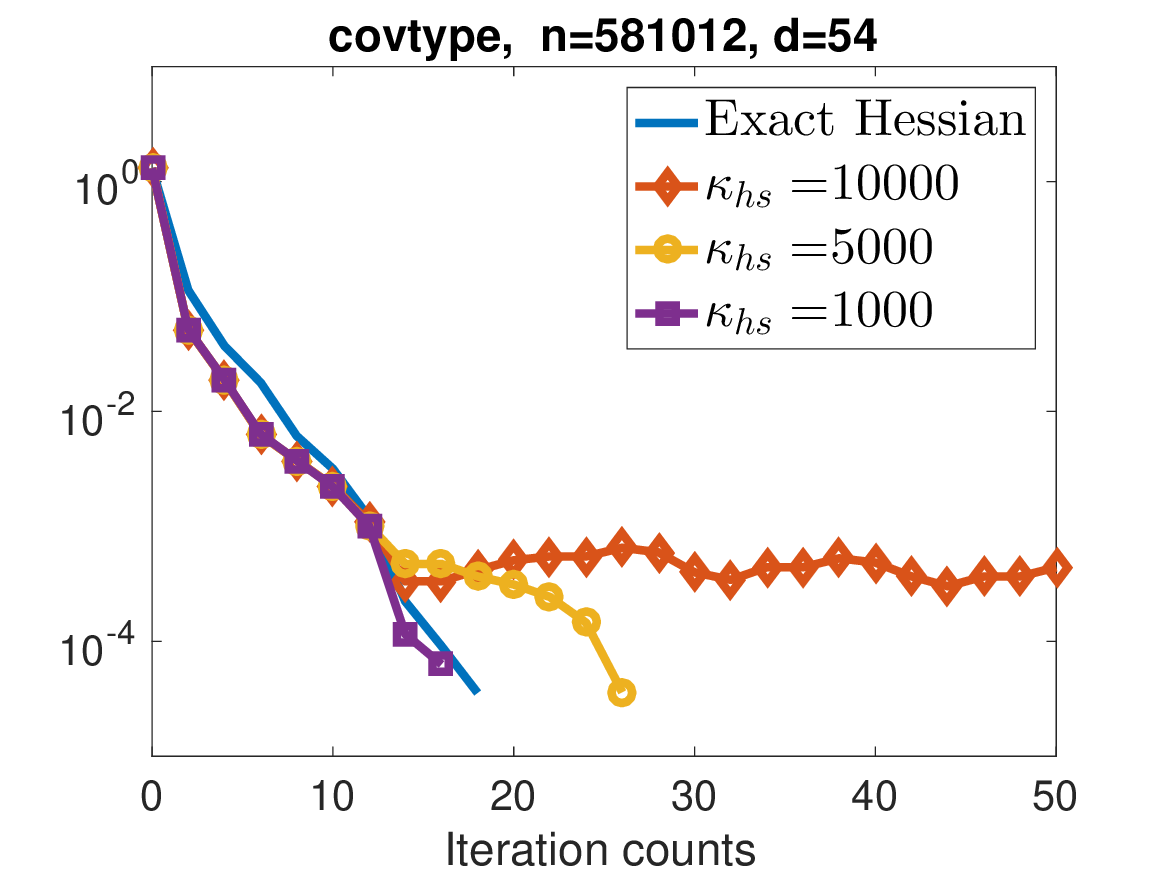}
	\hspace*{-1.5em}\includegraphics[width=0.37\textwidth, height=3.8cm]{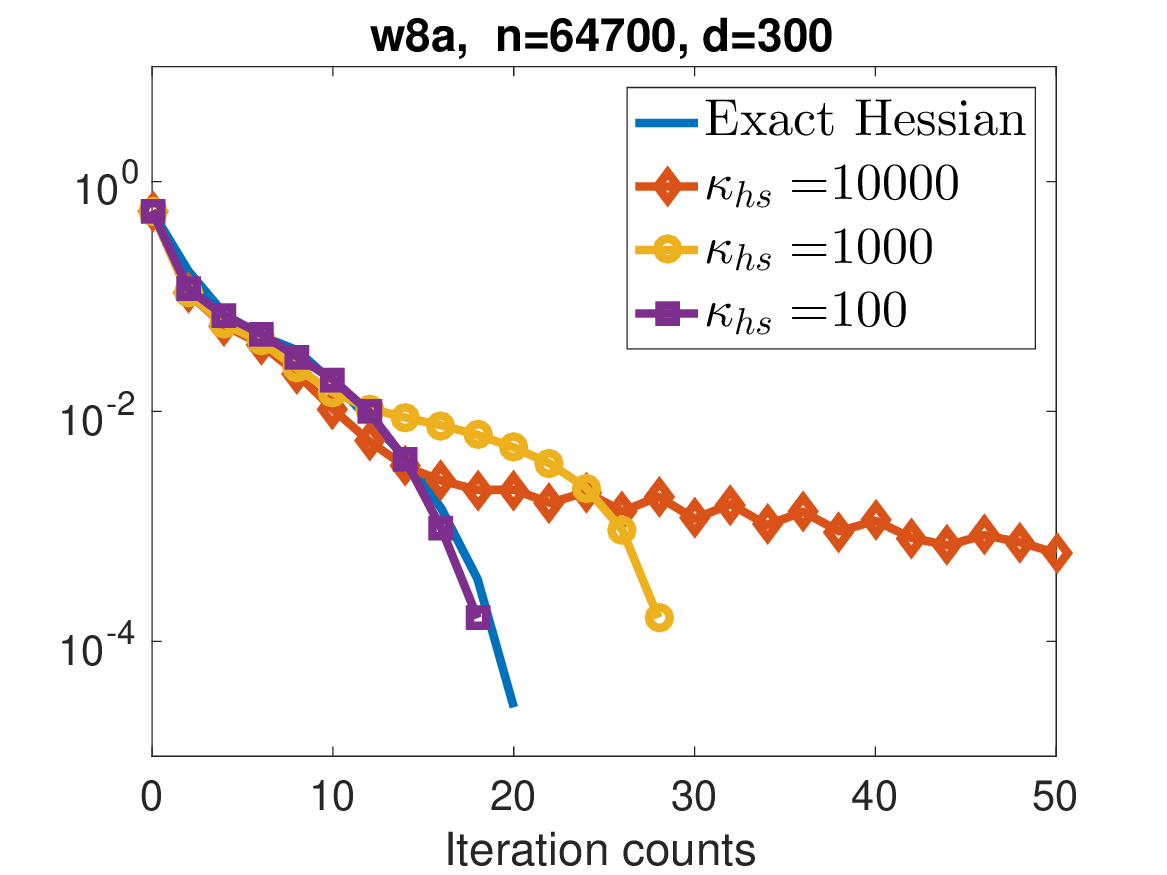}
	\caption{Iteration counts of AARC with Inexact Hessians on $\ell_1$-regularized logistic regression.}\label{fig4}
\end{figure}

\begin{figure}[!ht]
	\includegraphics[scale=0.45]{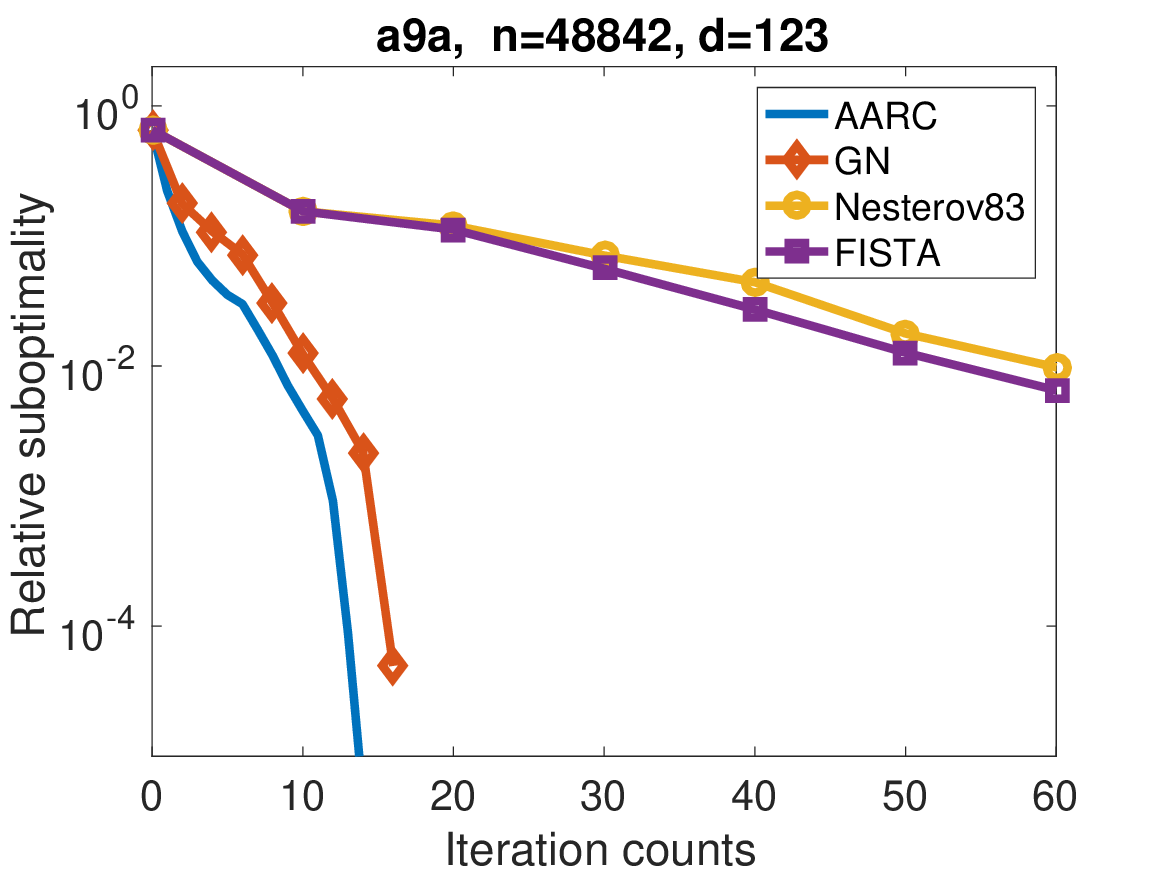}
	\includegraphics[scale=0.45]{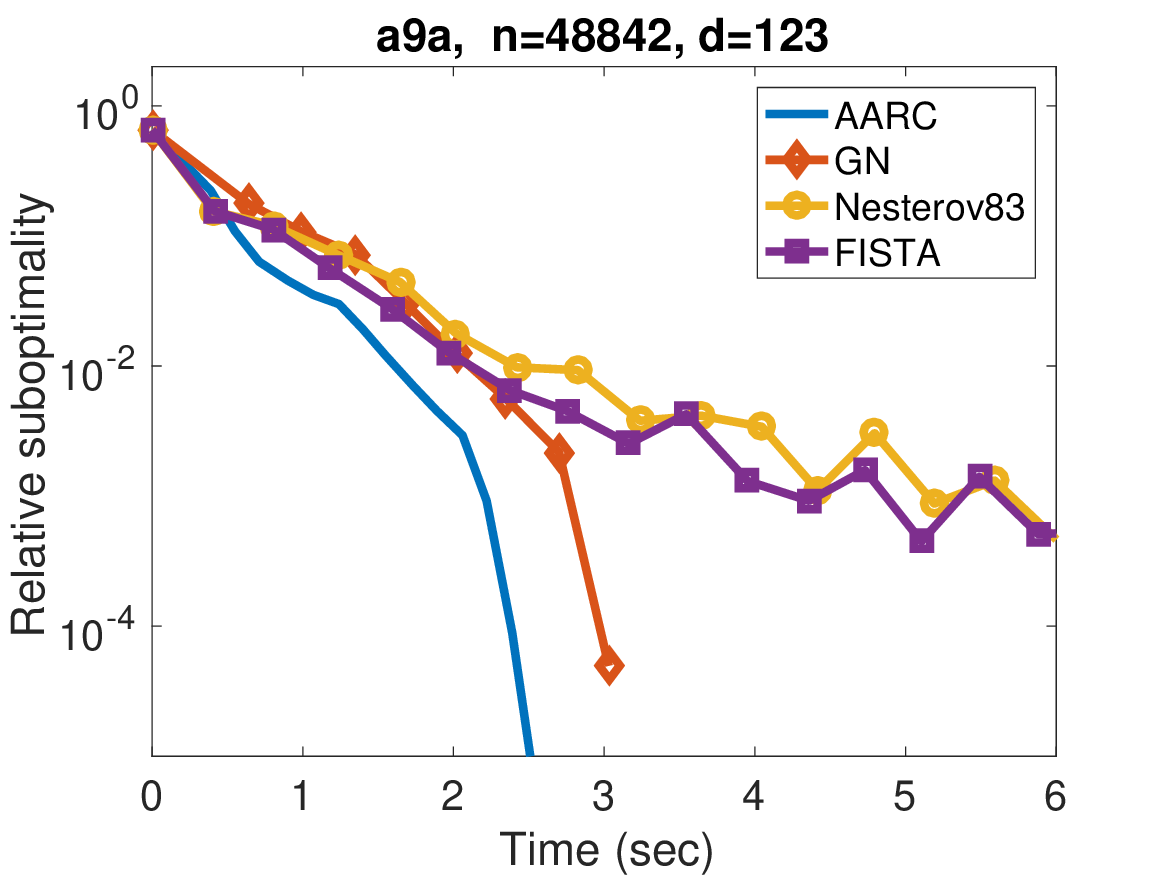}
	\includegraphics[scale=0.45]{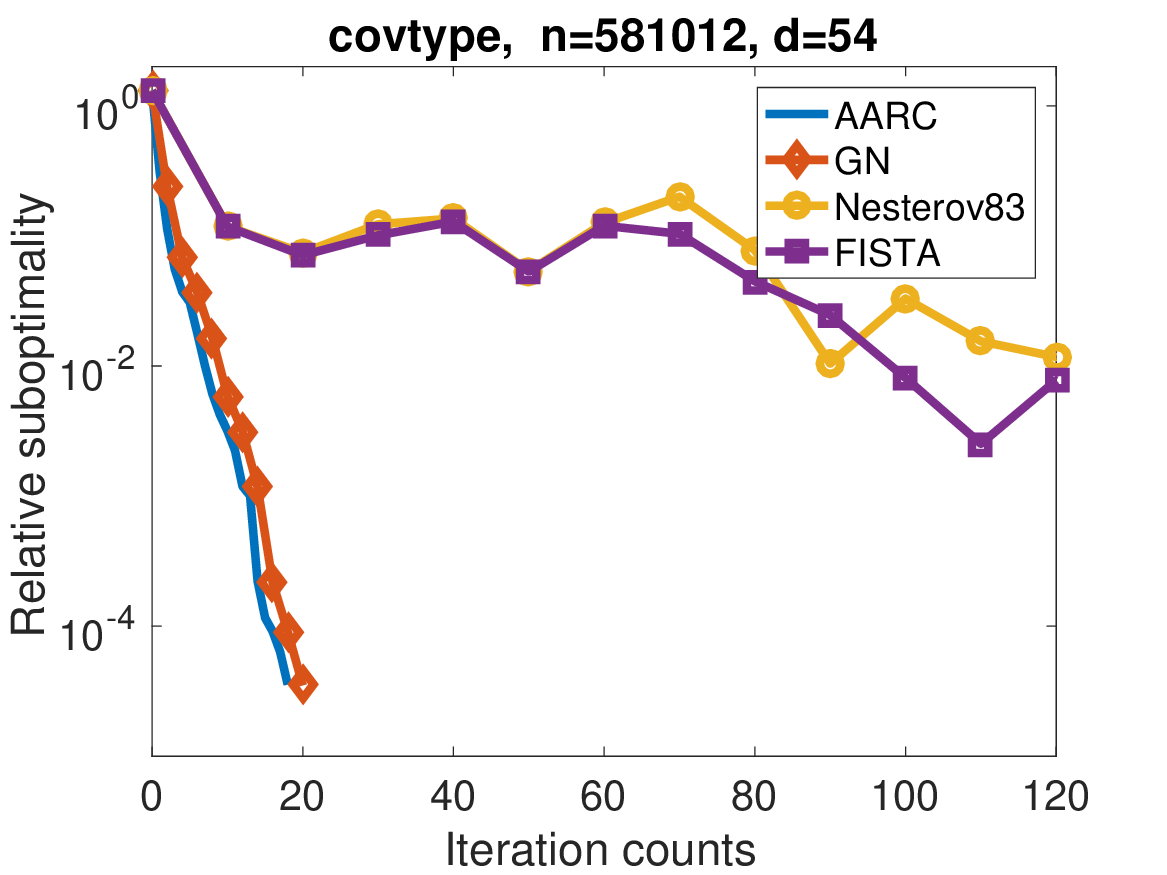}
	\includegraphics[scale=0.45]{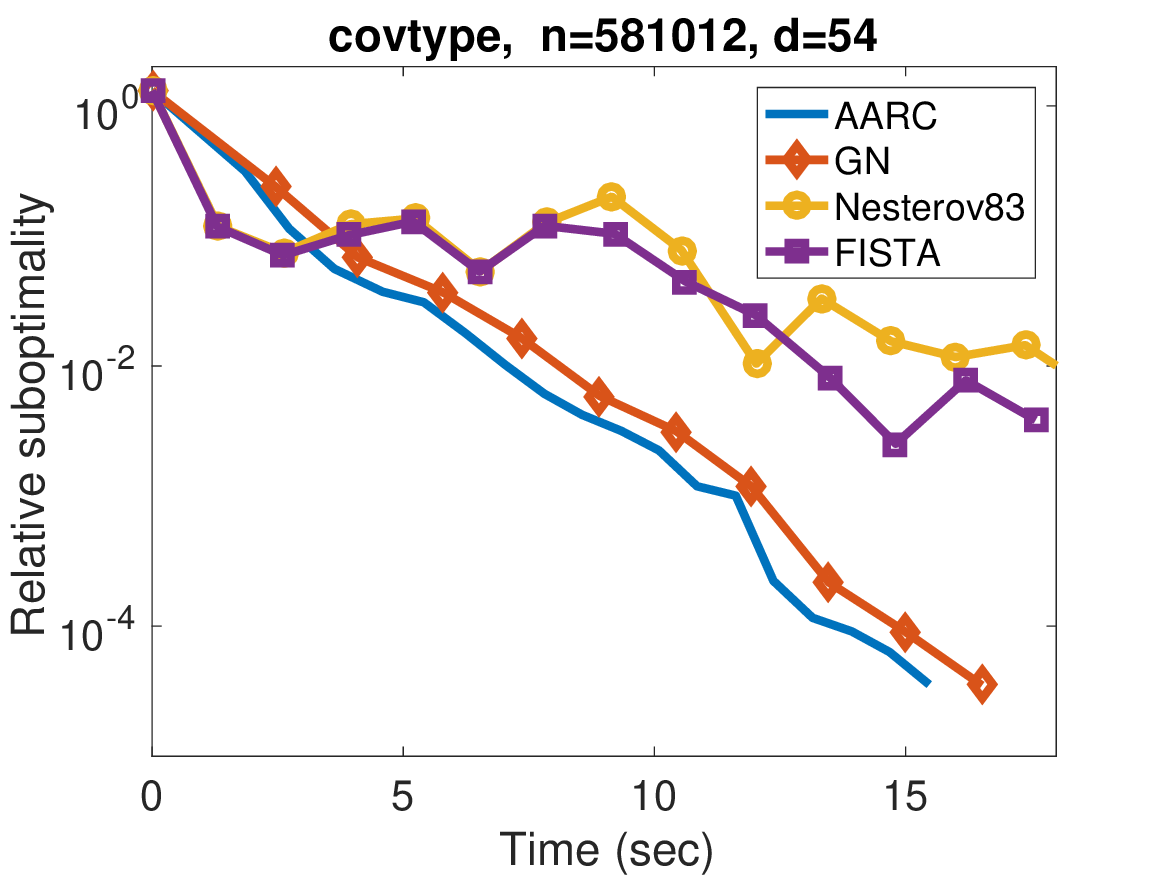}
	\includegraphics[scale=0.45]{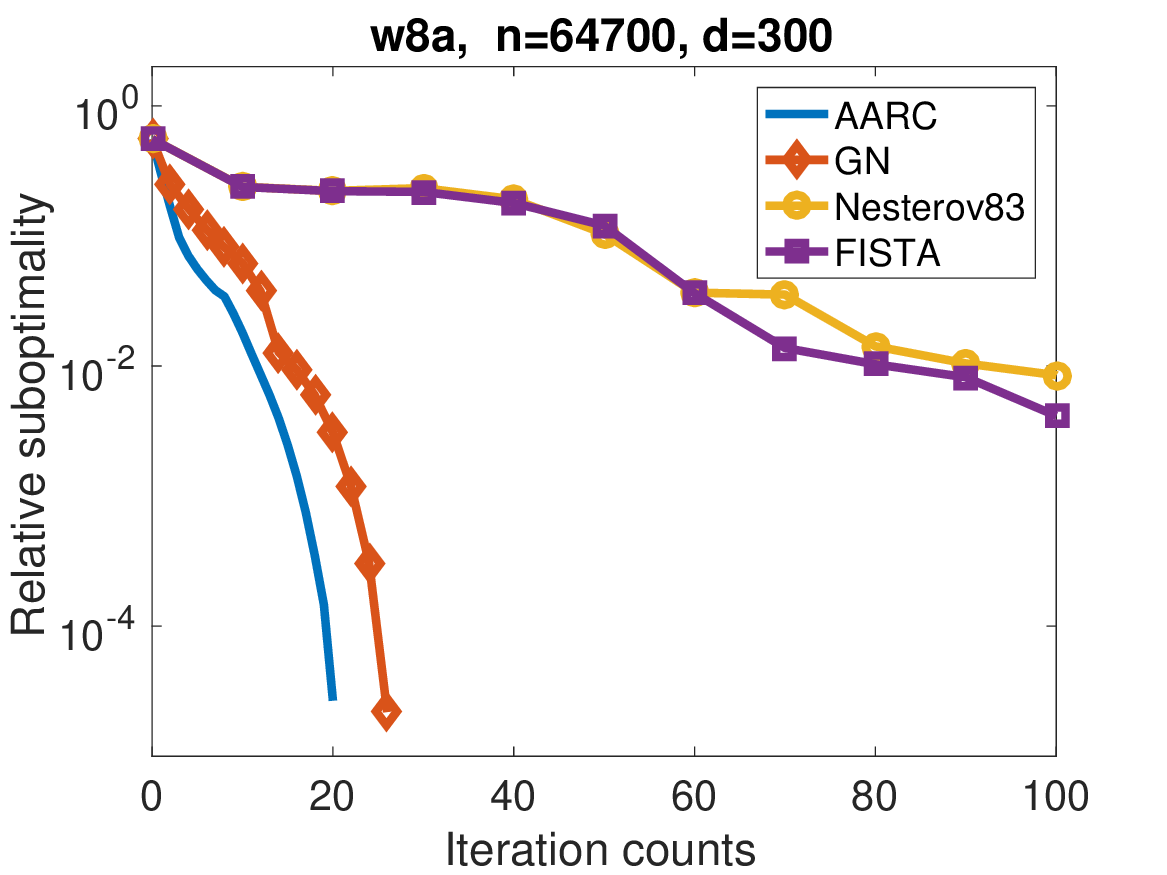}
	\includegraphics[scale=0.45]{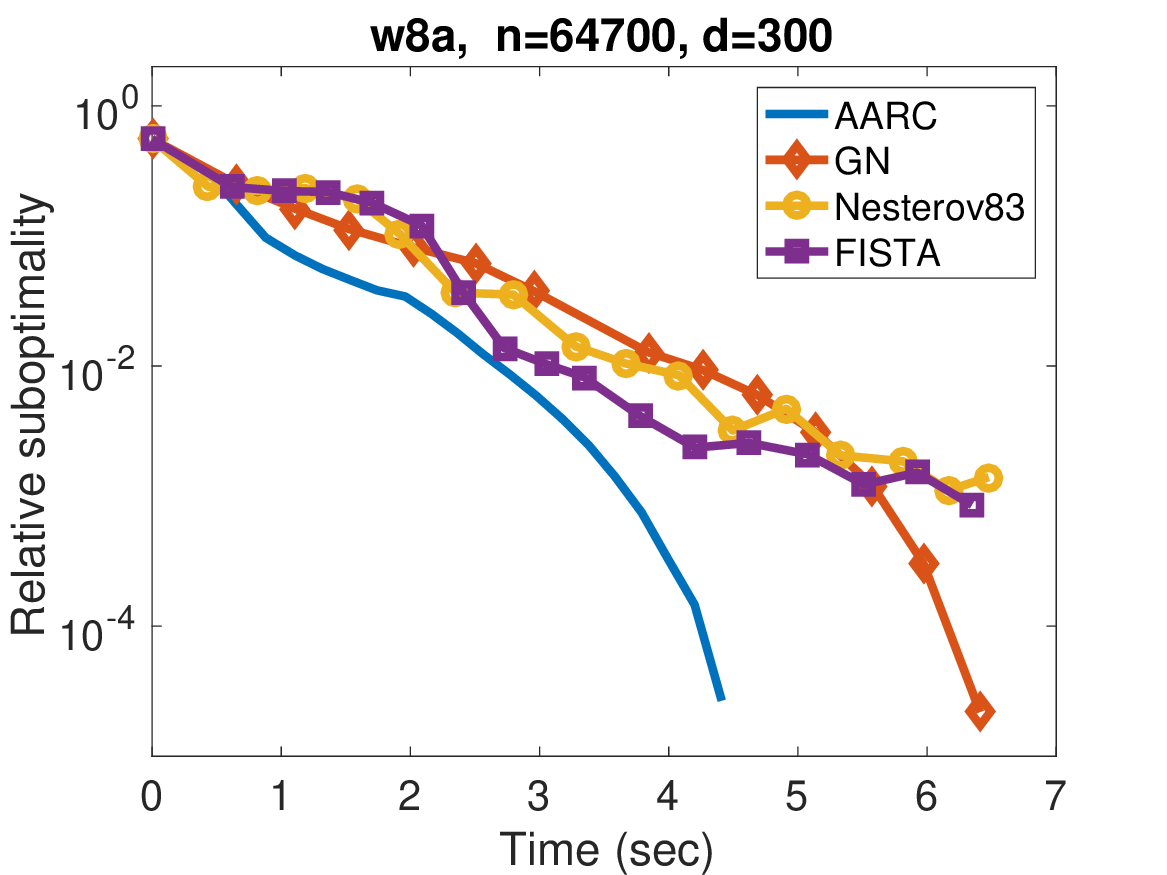}
	\caption{Iteration counts and computational time of the four methods on $\ell_1$-regularized logistic regression.}
	\label{fig3}
	\vspace*{-2em}
\end{figure}

	\section*{Acknowledgement} We would like to express our deep gratitude toward Professor Xi Chen of Stern School of Business at New York University for the fruitful discussions at various stages of this project.

\appendix
\section{Technical Proofs in Section \ref{section:framework}}
First, we bound the total number of iteration in SAS, denoted as $T_1$, and the total number of {iterations} in AAS, denoted as $T_2$.
\begin{lemma} \label{Lemma:Unified-SAS}
	Let
	$ \bar{\sigma}_1= \max\left\{\sigma_0,  \ (p+1) \gamma_2 \beta_p \right\}$, 
	{where $\beta_p$ is defined in \eqref{Def:Effective-Objective-Solution}.} We have
	$ T_1 \leq 1 + \frac{2}{\log\left(\gamma_1\right)}\log\left(\frac{\bar{\sigma}_1}{\sigma_{\min}}\right). $
\end{lemma}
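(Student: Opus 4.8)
The plan is to show that the successful-iteration test in \textsf{SAS} is guaranteed to fire once the regularization parameter $\sigma_i$ exceeds a fixed threshold governed by $\beta_p$, and then to bound how many unsuccessful iterations can precede that event, using the fact that each unsuccessful step inflates $\sigma_i$ by a factor of at least $\gamma_1$.

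First I would expand the test quantity $F(\x_{i+1}) - m(\x_{i+1}; \x_i, \sigma_i)$. Since the $r$-terms cancel, it equals $f(\x_{i+1}) - \overline{m}(\x_{i+1}; \x_i) - \frac{\sigma_i}{p+1}\|\x_{i+1} - \x_i\|^{p+1}$. Because $\x_{i+1}$ is an approximate minimizer of $m(\cdot; \x_i, \sigma_i)$ in the sense of \eqref{Criterion:Approximate-Adaptive}, the effective-approximation estimate \eqref{Def:Effective-Objective-Solution} applies with $\bar{\x}=\x_{i+1}$, $\x=\x_i$, giving $f(\x_{i+1}) - \overline{m}(\x_{i+1}; \x_i) \le \beta_p\|\x_{i+1}-\x_i\|^{p+1}$, so the test quantity is at most $\left(\beta_p - \frac{\sigma_i}{p+1}\right)\|\x_{i+1}-\x_i\|^{p+1}$. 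Hence whenever $\sigma_i > (p+1)\beta_p$ and the trial step is nonzero, the iteration is successful; the degenerate case $\x_{i+1}=\x_i$ forces $\nabla f(\x_i)+\bar{\xi}=0$ with $\bar{\xi}\in\partial r(\x_i)$ through \eqref{Criterion:Approximate-Adaptive}, i.e.\ $\x_i$ already solves \eqref{prob:main}, and may be set aside. Taking the contrapositive, every unsuccessful iteration has $\sigma_i \le (p+1)\beta_p$, so the subsequent inflation leaves $\sigma_{i+1} \le \gamma_2\sigma_i \le \gamma_2(p+1)\beta_p$; together with the initial value this shows $\sigma_i$ never exceeds $\bar{\sigma}_1 = \max\{\sigma_0, (p+1)\gamma_2\beta_p\}$ throughout \textsf{SAS}.

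Next I would count iterations. Since \textsf{SAS} halts at its first successful iteration, each of the first $T_1-1$ iterations is unsuccessful and multiplies $\sigma$ by at least $\gamma_1$; as no successful step intervenes to shrink it, $\sigma_{T_1-1} \ge \gamma_1^{T_1-1}\sigma_0 \ge \gamma_1^{T_1-1}\sigma_{\min}$. Combining with the upper bound $\sigma_{T_1-1}\le\bar{\sigma}_1$ from the previous step gives $\gamma_1^{T_1-1} \le \bar{\sigma}_1/\sigma_{\min}$, and taking logarithms yields $T_1-1 \le \frac{1}{\log(\gamma_1)}\log\left(\frac{\bar{\sigma}_1}{\sigma_{\min}}\right)$. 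Because $\bar{\sigma}_1 \ge \sigma_0 \ge \sigma_{\min}$ renders the logarithm nonnegative, this already implies the stated (looser, by a factor of two) bound $T_1 \le 1 + \frac{2}{\log(\gamma_1)}\log\left(\frac{\bar{\sigma}_1}{\sigma_{\min}}\right)$.

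The logarithmic count is routine bookkeeping on the monotone growth of $\sigma_i$. The step that demands care is the first: confirming that the cancellation of $r$ together with \eqref{Def:Effective-Objective-Solution} indeed forces a successful step once $\sigma_i$ passes $(p+1)\beta_p$, and disposing of the vanishing-step boundary case so that the clean factor-$\gamma_1$ growth argument applies at every iteration. Everything downstream follows mechanically.
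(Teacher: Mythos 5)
Your proof is correct and follows essentially the same route as the paper's: the same use of \eqref{Def:Effective-Objective-Solution} to show that once $\sigma_i$ exceeds $(p+1)\beta_p$ the success test must fire, followed by the same geometric-growth count of the unsuccessful iterations. The only differences are refinements in your favor: you track $\sigma_{T_1-1}$ rather than the post-success value $\sigma_{T_1}$ (so you never need the final ratio $\sigma_{T_1}/\sigma_{T_1-1}\geq \sigma_{\min}/\bar{\sigma}_1$ that costs the paper its factor of $2$, and you obtain the tighter constant before relaxing to the stated bound), and you explicitly dispose of the degenerate step $\x_{i+1}=\x_i$, a boundary case of the strict success criterion that the paper's non-strict inequality silently glosses over.
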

%\begin{proof}
%	We observe that
%	\begin{eqnarray*}
%		F(\x_{i+1}) = f(\x_{i+1}) + r(\x_{i+1}) & \overset{~\eqref{Def:Effective-Objective-Solution}}{\leq} & \overline{m}(\x_{i+1};\x_i) + \beta_p \|\x_{i+1} - \x_i\|^{p+1} + r(\x_{i+1}) \\
%		& = & m\left(\x_{i+1}; \x_i, \sigma_i\right) + \left(\beta_p - \frac{\sigma_i}{p+1}\right) \left\|\x_{i+1} - \x_i\right\|^{p+1}.
%	\end{eqnarray*}
%	Hence
%	\[ \sigma_i \geq (p+1) \beta_p \ \Longrightarrow \ F(\x_{i+1}) \leq m(\x_{i+1}; \x_i, \sigma_i). \]
%	This implies that
%	\[ \sigma_i < (p+1) \beta_p, \quad \forall i \leq T_1-2. \]
%	Therefore, we conclude that
%	\[ \sigma_{\min} \leq \sigma_{T_1} \leq \sigma_{T_1-1} \leq \max\left\{\sigma_0,  \ (p+1) \gamma_2 \beta_p \right\} = \bar{\sigma}_1. \]
%	Combining the facts that $\sigma_{\min} \leq \sigma_0 \leq \bar{\sigma}_1$ and $\gamma_1\sigma_i\leq\sigma_{i+1}$ for all unsuccessful iterations yields that
%	\[ \frac{\bar{\sigma}_1}{\sigma_{\min}} \geq \frac{\sigma_{T_1}}{\sigma_0} = \frac{\sigma_{T_1}}{\sigma_{T_1-1}} \cdot \prod_{i=0}^{T_1-2} \frac{\sigma_{i+1}}{\sigma_i} \geq \gamma_1^{T_1-1}\left(\frac{\sigma_{\min}}{\bar{\sigma}_1}\right)
%	\]
%	which implies the desired result. This completes the proof.
%\end{proof}
The lemma above is motivated from Theorem 2.1 in \cite{Cartis-2011-Adaptive-II}, and the proof is omitted as it is mostly identical to the one in \cite{Cartis-2011-Adaptive-II}.

\begin{lemma}\label{Lemma:Second-Order-AAS}
	Let $\SCal$ be the set of successful iteration counts in the total iteration count {of AAS}  and
	\[ \bar{\sigma}_2= \max\left\{\bar{\sigma}_1, \ \gamma_2\left(\kappa_{\theta} + \rho_p + \eta\right)\right\}, { -\lambda_{\min}\left(\nabla^2\overline{m}(\bar{\x}_0;{{\x_0}}) \right)/\|\bar{\x}_0 - {\x_0}\|^{p-1}} \} \]
	where $\kappa_\theta$ and $\rho_p$ are defined in \eqref{Def:Effective-Gradient-Solution} and \eqref{Criterion:Approximate-Adaptive} respectively, { $\x_0$ is the initial point of SAS, and $\bar{\x}_0$ is the output of SAS}. Then we have { $\sigma_{\min}\le\sigma_i \le \bar\sigma_2$ for all $i$ in AAS and}
	$ T_2 \leq \left(1+\frac{2}{\log(\gamma_1)}\log\left(\frac{\bar{\sigma}_2}{\sigma_{\min}}\right)\right)|\SCal|. $
\end{lemma}
\begin{proof}
	We observe that
	\begin{eqnarray}\label{Inequality:Lemma-A2}
	& &\left(\y_j - \x_{i+1}\right)^\top \left( \nabla \overline{m}(\x_{i+1};\y_j)+ \sigma_i \left\|\x_{i+1} - \y_j\right\|^{p-1} \left(\x_{i+1} - \y_j\right) + \xi_{i+1}\right) \\
	& \geq &  - \left\| \y_j - \x_{i+1}\right\| \cdot \left\|\nabla \overline{m}(\x_{i+1};\y_j)+ \sigma_i \left\|\x_{i+1} - \y_j\right\|^{p-1} \left(\x_{i+1} - \y_j\right) + \xi_{i+1}\right\| \nonumber \\
	& \overset{~\eqref{Criterion:Approximate-Adaptive}}{\geq} & - \kappa_\theta \left\|\x_{i+1} - \y_j\right\|^{p+1}. \nonumber
	\end{eqnarray}
	Consequently, we conclude that
	\begin{eqnarray*}
		\theta(\x_{i+1}, \y_j, \xi_{i+1}) & = & \frac{\left(\y_j - \x_{i+1}\right)^\top\left(\nabla f(\x_{i+1}) + \xi_{i+1}\right)}{\left\|\y_j - \x_{i+1}\right\|^{p+1}} \\
		&=&{ \frac{ \left(\y_j - \x_{i+1}\right)^\top \left( \nabla f(\x_{i+1}) - \nabla \overline{m}(\x_{i+1};\y_j)- \sigma_i \left\|\x_{i+1} - \y_j\right\|^{p-1} \left(\x_{i+1} - \y_j\right) \right)}{\left\|\y_j - \x_{i+1}\right\|^{p+1}} }\\
		&&{  + \frac{ \left(\y_j - \x_{i+1}\right)^\top \left( \nabla \overline{m}(\x_{i+1};\y_j)+ \sigma_i \left\|\x_{i+1} - \y_j\right\|^{p-1} \left(\x_{i+1} - \y_j\right) + \xi_{i+1}\right)}{\left\|\y_j - \x_{i+1}\right\|^{p+1}} }\\
		& \overset{~\eqref{Inequality:Lemma-A2}}{\geq} & \sigma_i - \kappa_\theta + \frac{\left(\y_j - \x_{i+1}\right)^\top\left(\nabla f(\x_{i+1}) - \nabla \overline{m}(\x_{i+1};\y_j) \right)}{\left\|\y_j - \x_{i+1}\right\|^{p+1}} \\
		& \geq & \sigma_i - \kappa_\theta - \frac{\left\|\y_j - \x_{i+1}\right\|\left\| \nabla f(\x_{i+1}) - \nabla \overline{m}(\x_{i+1};\y_j) \right\|}{\left\|\y_j - \x_{i+1}\right\|^{p+1}}  \\
		& \overset{~\eqref{Def:Effective-Gradient-Solution}}{\geq} & \sigma_i - \kappa_\theta - \frac{\rho_p \left\|\y_j - \x_{i+1}\right\|^{p+1} }{\left\|\y_j - \x_{i+1}\right\|^{p+1}} \\
		& = & \sigma_i - \kappa_\theta - \rho_p,
	\end{eqnarray*}
	and $\sigma_i \geq \kappa_\theta + \rho_p + \eta \ \Longrightarrow \ \theta(\x_{i+1}, \y_j, \xi_{i+1}) \geq \eta$.
	This implies that
	\[
	\sigma_{i+1} \leq \sigma_i \leq \gamma_2 \sigma_{i-1} \leq \gamma_2 \left(\kappa_\theta + \rho_p + \eta\right),\quad \forall \ i \in \SCal.
	\]
	Therefore, $\sigma_i$ can be upper bounded by $\bar{\sigma}_2$ and lower bounded by $\sigma_{\min}$ in AAS. In addition, $\gamma_1 \sigma_i \leq \sigma_{i+1}$ for any $i \notin \SCal$. Therefore, we have
	\[
	\frac{\bar{\sigma}_2}{\sigma_{\min}} \geq \frac{\sigma_{T_2}}{\sigma_0} = \prod_{i\in\SCal} \frac{\sigma_{i+1}}{\sigma_i} \cdot \prod_{i\notin\SCal} \frac{\sigma_{i+1}}{\sigma_i} \geq \gamma_1^{T_2-|\SCal|}\left(\frac{\sigma_{\min}}{\bar{\sigma}_2}\right)^{|\SCal|},
	\]
	which further implies an upper bound for $T_2$, completing the proof.
\end{proof}
Next we proceed to bounding the total number of times updating the regularization parameter $\tau$ in the auxiliary model, which is denoted as $T_3$. This requires three key technical lemmas presented below.
\begin{lemma} {(Lemma 2 in \cite{Nesterov-2008-Accelerating})}	For any $\g \in \br^d$, $\s \in \br^d$ and integer $q \geq 2$, we have
	\begin{equation}\label{Inequality:First-Second-Order-General}
	\g^\top \s + \frac{\sigma\left\|\s\right\|^q}{q} \geq -\frac{q-1}{q} \left(\frac{\left\|\g\right\|^{q}}{\sigma} \right)^{\frac{1}{q-1}}.
	\end{equation}
\end{lemma}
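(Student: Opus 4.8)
The plan is to observe that the left-hand side is minimized when $\s$ points opposite to $\g$, so after applying the Cauchy--Schwarz inequality $\g^\top\s \ge -\|\g\|\,\|\s\|$ the problem collapses to a one-dimensional minimization in the scalar $t=\|\s\|\ge 0$. Concretely, it suffices to establish
$$
\phi(t) := -\|\g\|\,t + \frac{\sigma t^q}{q} \ \ge\ -\frac{q-1}{q}\left(\frac{\|\g\|^q}{\sigma}\right)^{\frac{1}{q-1}}, \qquad t \ge 0,
$$
because every $\s$ satisfies $\g^\top\s + \frac{\sigma\|\s\|^q}{q} \ge \phi(\|\s\|)$ by Cauchy--Schwarz.

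First I would minimize $\phi$ over $t\ge 0$ directly. Setting $\phi'(t)=-\|\g\|+\sigma t^{q-1}=0$ gives the unique critical point $t_\ast=(\|\g\|/\sigma)^{1/(q-1)}$, which is a global minimizer since $\phi$ is convex on $[0,\infty)$ for $q\ge 2$. Using the identity $\sigma t_\ast^{q-1}=\|\g\|$ one simplifies $\phi(t_\ast) = -\frac{q-1}{q}\|\g\|\,t_\ast$, and then $\|\g\|\,t_\ast = \|\g\|^{q/(q-1)}\sigma^{-1/(q-1)} = (\|\g\|^q/\sigma)^{1/(q-1)}$, which is exactly the claimed bound.

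A cleaner route that avoids differentiation is to invoke Young's inequality with the conjugate exponents $q$ and $q/(q-1)$: writing $\|\g\|\,\|\s\| = (\sigma^{1/q}\|\s\|)\cdot(\sigma^{-1/q}\|\g\|)$ and applying $ab\le \frac{a^q}{q} + \frac{q-1}{q}\,b^{q/(q-1)}$ yields
$$
\|\g\|\,\|\s\| \ \le\ \frac{\sigma\|\s\|^q}{q} + \frac{q-1}{q}\left(\frac{\|\g\|^q}{\sigma}\right)^{\frac{1}{q-1}},
$$
and combining this with $\g^\top\s\ge -\|\g\|\,\|\s\|$ gives the result after rearrangement. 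I do not anticipate any genuine obstacle here; the only step requiring care is the bookkeeping of exponents, namely verifying that the H\"older conjugate of $q$ is $q/(q-1)$ and that the powers of $\sigma$ and $\|\g\|$ recombine exactly into $(\|\g\|^q/\sigma)^{1/(q-1)}$.
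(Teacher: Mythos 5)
Your proposal is correct; both of your routes establish the inequality with no gaps. Your first route is in essence the paper's own proof, reorganized: the paper minimizes $\g^\top\s+\frac{\sigma}{q}\|\s\|^q$ directly over $\s\in\br^d$, writes the vector first-order condition $\g+\sigma\|\s^*\|^{q-2}\s^*=0$, and extracts the identities $\g^\top\s^*=-\sigma\|\s^*\|^q$ and $\|\g\|=\sigma\|\s^*\|^{q-1}$, whereas you first invoke Cauchy--Schwarz to collapse to the scalar function $\phi(t)=-\|\g\|t+\frac{\sigma t^q}{q}$ and then run the identical critical-point computation; the content is the same, but your version makes explicit the fact (implicit in the paper's optimality condition) that the minimizing $\s$ is anti-parallel to $\g$. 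Your second route, via Young's inequality with conjugate exponents $q$ and $q/(q-1)$, is genuinely different and arguably cleaner: it requires no differentiation and no appeal to existence of a minimizer, and it exposes the bound as an instance of convex duality (the right-hand side is, up to sign, the conjugate of $\frac{\sigma}{q}\|\cdot\|^q$ evaluated at $\g$), which explains where the exponent $\frac{1}{q-1}$ and the constant $\frac{q-1}{q}$ come from. What the paper's calculus approach buys is self-containedness; what the Young route buys is brevity and transparency. One small caveat common to all three arguments: one needs $\sigma>0$ (implicit throughout the paper, since $\sigma\geq\sigma_{\min}>0$), and the degenerate case $\g=0$ is trivially covered since both sides are then nonnegative and zero respectively.
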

%\begin{proof}
%	We denote $\s^*$ as the minimizer of $\g^\top\s + \frac{\sigma}{q}\left\|\s\right\|^q$ and the first-order optimality condition gives that $\g + \sigma \| \s^* \|^{q-2} \s^* = 0$. This implies that
%	\[ \g^\top \s^* + \sigma \left\|\s^*\right\|^q = 0 \quad \mbox{and} \quad \left\|\g\right\| = \sigma\left\|\s^*\right\|^{q-1}, \]
%	and hence
%	\[ \g^\top \s + \frac{\sigma\left\|\s\right\|^q}{q} \geq \g^\top\s^* + \frac{\sigma\left\|\s^*\right\|^q}{q}= -\frac{(q-1)\,\sigma\left\|\s^*\right\|^q}{q} = -\frac{q-1}{q} \left(\frac{\left\|\g\right\|^{q}}{\sigma} \right)^{\frac{1}{q-1}}. \]
%	This completes the proof.
%\end{proof}

\begin{lemma}\label{Lemma:Unified-Auxillary-Minimizer}
	For the minimizer of $\psi_j(\z, \tau_j)$, i.e.,
	$\z_j = \argmin\limits_{\z\in\br^d} \ \psi_j(\z, \tau_j)$,
	we have
	\[ \psi_j(\z, \tau_j) - \psi_j(\z_j, \tau_j) \geq  {\frac{\tau_j}{2^{p} } \frac{\left\|\z - \z_j\right\|^{p+1}}{p+1}} .\]
\end{lemma}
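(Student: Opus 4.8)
The plan is to reduce the claimed uniform-convexity estimate to a single scalar inequality for the power-of-norm regularizer $R$. I would first exploit the two structural facts about $\psi_j(\cdot,\tau_j)=l_j(\z)+\tau_j R(\z)$: the term $l_j$ is \emph{affine} in $\z$ (it is built from the affine increments $\Delta l_j$, as noted in the construction of the auxiliary model), and $\z_j$ is its \emph{exact} minimizer. The first-order optimality condition therefore reads $\nabla l_j+\tau_j\nabla R(\z_j)=0$, and because $l_j$ is affine, its gradient $\nabla l_j$ is a constant vector with $l_j(\z)-l_j(\z_j)=(\z-\z_j)^\top\nabla l_j=-\tau_j(\z-\z_j)^\top\nabla R(\z_j)$. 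Substituting this into $\psi_j(\z,\tau_j)-\psi_j(\z_j,\tau_j)=[l_j(\z)-l_j(\z_j)]+\tau_j[R(\z)-R(\z_j)]$ collapses the left-hand side to exactly $\tau_j$ times the Bregman divergence of $R$ at $\z_j$:
\[ \psi_j(\z,\tau_j)-\psi_j(\z_j,\tau_j)=\tau_j\Big(R(\z)-R(\z_j)-(\z-\z_j)^\top\nabla R(\z_j)\Big). \]
Thus the entire statement reduces to lower bounding this Bregman divergence.

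The main work, and the genuine obstacle, is the uniform-convexity estimate for the power function $d(\mathbf{u})=\tfrac1q\|\mathbf{u}\|^q$ with integer $q\ge 2$, namely $d(\mathbf{u})-d(\mathbf{v})-(\mathbf{u}-\mathbf{v})^\top\nabla d(\mathbf{v})\ge 2^{2-q}\,\|\mathbf{u}-\mathbf{v}\|^q/q$. I would establish this in two steps. First, prove the strong monotonicity of the gradient map $\nabla d(\mathbf{u})=\|\mathbf{u}\|^{q-2}\mathbf{u}$, i.e.\ $(\mathbf{u}-\mathbf{v})^\top\big(\|\mathbf{u}\|^{q-2}\mathbf{u}-\|\mathbf{v}\|^{q-2}\mathbf{v}\big)\ge 2^{2-q}\|\mathbf{u}-\mathbf{v}\|^q$; this is the technical heart, and I would prove it by a homogeneity/normalization argument that reduces the problem to the worst-case planar configuration of the two vectors, which is what produces the dimension-free constant $2^{2-q}$. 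Second, integrate along the segment $\mathbf{w}(t)=\mathbf{v}+t(\mathbf{u}-\mathbf{v})$: writing the Bregman divergence as $\int_0^1(\mathbf{u}-\mathbf{v})^\top\big(\nabla d(\mathbf{w}(t))-\nabla d(\mathbf{v})\big)\,dt$ and applying the monotonicity bound with $\mathbf{w}(t)-\mathbf{v}=t(\mathbf{u}-\mathbf{v})$ gives an integrand bounded below by $2^{2-q}t^{q-1}\|\mathbf{u}-\mathbf{v}\|^q$, whose integral equals $2^{2-q}\|\mathbf{u}-\mathbf{v}\|^q/q$.

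Finally I would specialize to the regularizer at hand. Since $R(\z)=\frac{1}{2q}\|\z-\bar{\x}_0\|^{q}=\tfrac12\,d(\z-\bar{\x}_0)$ with $q=p+1$, applying the Bregman bound with $\mathbf{u}=\z-\bar{\x}_0$ and $\mathbf{v}=\z_j-\bar{\x}_0$ (so $\mathbf{u}-\mathbf{v}=\z-\z_j$) yields
\[ R(\z)-R(\z_j)-(\z-\z_j)^\top\nabla R(\z_j)\ \ge\ \tfrac12\cdot\frac{2^{2-q}}{q}\,\|\z-\z_j\|^q\ =\ \frac{1}{2^{q-1}}\,\frac{\|\z-\z_j\|^q}{q}. \]
Multiplying by $\tau_j$ and combining with the reduction of the first paragraph delivers the claimed inequality. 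The only delicate point beyond the monotonicity estimate is the bookkeeping of constants: the factor $\tfrac12$ carried by $R$ together with the $2^{2-q}$ from uniform convexity combine to give precisely the stated $2^{1-q}=1/2^{q-1}$.
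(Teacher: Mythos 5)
Your proposal is correct and follows essentially the same route as the paper: decompose $\psi_j(\z,\tau_j)-\psi_j(\z_j,\tau_j)$ using the affinity of $l_j$, cancel the linear terms via the optimality condition $\nabla l_j(\z_j) + \tau_j \nabla R(\z_j) = 0$, and invoke uniform convexity of the power-of-norm regularizer with constant $2^{2-q}/q$, which after the factor $\tfrac12$ in $R$ gives exactly $\tau_j \|\z-\z_j\|^q/(q\,2^{q-1})$. The only difference is bookkeeping: the paper imports the uniform-convexity inequality as a black box (Lemma 4 of Nesterov's 2008 paper, applied before the optimality cancellation rather than after, which is immaterial), whereas you propose to prove it from scratch via strong monotonicity of $\mathbf{u}\mapsto\|\mathbf{u}\|^{q-2}\mathbf{u}$ plus integration along the segment --- a valid, self-contained substitute for the citation.
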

\begin{proof}
	Recall that $\psi_j(\z, \tau_j)$ is the sum of a linear function and a {$(p+1)$}-th powered regularization function:
	$ \psi_j(\z, \tau_j) = l_j(\z) + \tau_j R(\z) = l_j(\z) + \frac{\tau_j}{2} {\frac{\left\|\z - \x_0\right\|^{p+1}}{p+1}}. $
	Thus, we have
	\begin{eqnarray*}
		& & \psi_j(\z, \tau_j) - \psi_j(\z_j, \tau_j) \\
		%		& = & l_j(\z) + \tau_j R(\z) - l_j(\z_j) - \tau_j R(\z_j) \\
		& = & \left(\z - \z_j\right)^\top \nabla l_j(\z_j) + \tau_j \left(R(\z) - R(\z_j)\right) \\
		%		& \overset{\cite[\text{Lemma 4}]{Nesterov-2008-Accelerating}}{\geq} & \left(\z - \z_j\right)^\top \nabla l_j(\z_j) + \tau_j \left(\z - \z_j\right)^\top \nabla R(\z_j) + \frac{\tau_j}{2} { \frac{\left\|\z - \z_j\right\|^{p+1}}{(p+1) 2^{p-1}}}\\
		&\ge& \left(\z - \z_j\right)^\top \nabla l_j(\z_j) + \tau_j \left(\z - \z_j\right)^\top \nabla R(\z_j) + { \frac{\tau_j}{2^{p} }\frac{\left\|\z - \z_j\right\|^{p+1}}{p+1} }.
	\end{eqnarray*}
	where the inequality is due to Lemma 4 in \cite{Nesterov-2008-Accelerating}.Since $\z_j$ is the minimizer of $\psi_j(\z, \tau_j)$ over $\z \in \br^d$, we have
	$ \nabla l_j(\z_j) + \tau_j \nabla R(\z_j) = \nabla \psi_j(\z_j, \tau_j) = 0$.
	Combining the above two formulas yields the desired result.
\end{proof}

\begin{lemma}\label{Lemma:Second-Order-Gradient}
	For any $j \geq 0$ in AAS, then we have
	\[ \left\| \nabla f(\bar{\x}_{j+1}) + \bar{\xi}_{j+1}\right\| \leq \left(\rho_p  + \bar \sigma_2 + \kappa_\theta\right) \left\|\bar \x_{j+1} - \y_j\right\|^p.\]
\end{lemma}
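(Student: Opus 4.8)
The plan is to derive the bound directly from the inexact optimality criterion \eqref{Criterion:Approximate-Adaptive}, which is exactly the residual that is controlled whenever $\bar{\x}_{j+1}$ is accepted at a successful iteration. Recall that at such an iteration we have $\bar{\x}_{j+1} = \x_{i+1}$ and $\bar{\xi}_{j+1} = \xi_{i+1} \in \partial r(\x_{i+1})$ with $\x_{i+1} \approx \argmin_{\x} m(\x; \y_j, \sigma_i)$, so the criterion \eqref{Criterion:Approximate-Adaptive} applies with center $\x = \y_j$ and regularization $\sigma = \sigma_i$, giving
\[
\left\| \nabla \overline{m}(\bar{\x}_{j+1}; \y_j) + \sigma_i \|\bar{\x}_{j+1} - \y_j\|^{p-1}(\bar{\x}_{j+1} - \y_j) + \bar{\xi}_{j+1} \right\| \leq \kappa_\theta \|\bar{\x}_{j+1} - \y_j\|^p.
\]

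First I would write the quantity of interest as a telescoping sum by adding and subtracting the two terms appearing above, namely
\[
\nabla f(\bar{\x}_{j+1}) + \bar{\xi}_{j+1} = \Big[ \nabla \overline{m}(\bar{\x}_{j+1}; \y_j) + \sigma_i \|\bar{\x}_{j+1} - \y_j\|^{p-1}(\bar{\x}_{j+1} - \y_j) + \bar{\xi}_{j+1} \Big] + \Big[ \nabla f(\bar{\x}_{j+1}) - \nabla \overline{m}(\bar{\x}_{j+1}; \y_j) \Big] - \sigma_i \|\bar{\x}_{j+1} - \y_j\|^{p-1}(\bar{\x}_{j+1} - \y_j).
\]
Applying the triangle inequality then splits the norm into three pieces. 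The first bracket is controlled by $\kappa_\theta \|\bar{\x}_{j+1} - \y_j\|^p$ via the criterion above; the second, the gradient discrepancy between $f$ and its effective approximation at the (approximate) minimizer, is bounded by $\rho_p \|\bar{\x}_{j+1} - \y_j\|^p$ using property \eqref{Def:Effective-Gradient-Solution} of the effective approximation; and the third term has norm exactly $\sigma_i \|\bar{\x}_{j+1} - \y_j\|^p$.

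It remains to replace $\sigma_i$ by the uniform upper bound $\bar{\sigma}_2$. This is precisely where Lemma \ref{Lemma:Second-Order-AAS} is invoked: its proof establishes that throughout AAS the regularization parameter satisfies $\sigma_i \leq \bar{\sigma}_2$. Summing the three estimates yields
\[
\left\| \nabla f(\bar{\x}_{j+1}) + \bar{\xi}_{j+1} \right\| \leq \left( \kappa_\theta + \rho_p + \bar{\sigma}_2 \right) \|\bar{\x}_{j+1} - \y_j\|^p,
\]
which is the claim. I do not anticipate a genuine obstacle here, since the argument is a clean application of the triangle inequality to the acceptance criterion; the only points requiring care are bookkeeping ones, namely correctly identifying $\bar{\x}_{j+1}$ with the accepted iterate $\x_{i+1}$ (so that \eqref{Criterion:Approximate-Adaptive} is genuinely available) and ensuring the $\sigma_i \leq \bar{\sigma}_2$ bound is legitimately carried over from Lemma \ref{Lemma:Second-Order-AAS}.
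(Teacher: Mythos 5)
Your proof is correct and follows essentially the same route as the paper: both arguments combine the acceptance criterion \eqref{Criterion:Approximate-Adaptive}, the effective-approximation gradient bound \eqref{Def:Effective-Gradient-Solution}, and the bound $\sigma_i \leq \bar{\sigma}_2$ from Lemma \ref{Lemma:Second-Order-AAS}. The only cosmetic difference is that you apply the triangle inequality once to a three-term decomposition, while the paper performs the same split in two successive steps (first bounding $\left\| \nabla \overline{m}(\bar{\x}_{j+1};\y_j) + \bar{\xi}_{j+1}\right\|$, then adding the $\nabla f - \nabla \overline{m}$ discrepancy).
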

\begin{proof}
	We observe that
	\begin{eqnarray}\label{Inequality:Lemma-A5}
	& & \left\|\nabla \overline{m}(\bar \x_{j+1};\y_j) + \bar \xi_{j+1}\right\|  \\
	& \leq &  \left\|\nabla \overline{m}(\bar \x_{j+1};\y_j)+ \sigma_i \left\|\bar \x_{j+1} - \y_j\right\|^{p-1} \left(\bar \x_{j+1} - \y_j\right) + \bar \xi_{j+1}\right\| + \sigma_i \left\| \bar \x_{j+1} - \y_j\right\|^p \nonumber \\
	& \overset{~\eqref{Criterion:Approximate-Adaptive}}{\leq} &  \kappa_\theta \left\|\bar \x_{j+1} - \y_j\right\|^p + \sigma_i \left\| \bar \x_{j+1} - \y_j\right\|^p \nonumber \\
	& \overset{\text{Lemma~\ref{Lemma:Second-Order-AAS}}}{\leq} & \left(\kappa_\theta + \bar{\sigma}_2\right) \left\|\bar \x_{j+1} - \y_j\right\|^p. \nonumber
	\end{eqnarray}
	Therefore,
	\begin{eqnarray*}
		\left\| \nabla f(\bar{\x}_{j+1}) + \bar{\xi}_{j+1}\right\| & \leq  & \left\| \nabla f(\bar{\x}_{j+1}) - \nabla \overline{m}(\x_{j+1};\y_j) \right\| +  \left\| \nabla \overline{m}(\bar \x_{j+1};\y_j) + \bar{\xi}_{j+1} \right\| \\
		& \overset{~\eqref{Def:Effective-Gradient-Solution}}{\leq} & \rho_p \|\bar \x_{j+1} - \y_{j}\|^p + \left(\kappa_\theta + \bar{\sigma}_2\right) \left\|\bar \x_{j+1} - \y_j\right\|^p \\
		& = & \left(\rho_p + \bar{\sigma}_2 + \kappa_\theta\right) \left\|\bar \x_{j+1} - \y_j\right\|^p.
	\end{eqnarray*}
\end{proof}
We remark that the above result is motivated from Lemma 5.2 in \cite{Cartis-2011-Adaptive-II}, which originally works for cubic regularized methods with smooth objective function. Next, we bound $T_3$, the total number of times updating $\tau$ in the auxiliary model:
\begin{lemma}\label{Lemma:AAS-Auxillary}
	For any {successful iteration} $j \geq 0$ in AAS, we have
	\[ \psi_j(\z_j, \tau_j) \geq \frac{\Pi_{\ell=1}^{p+1}(j+\ell)}{(p+1)!} F(\bar{\x}_j) \]
	provided that
	$
	\tau_j \geq \frac{{2^p}\left(\rho_p + \bar{\sigma}_2 + \kappa_{\theta}\right)^{p+1} p^{p-1}}{\eta^p (p-1)!} > 0
	$.
	As a consequence,
	\[
	T_3 \leq 1 + \left\lceil \frac{1}{\log\left(\gamma_3\right)}\log\left(\frac{{2^p}\left(\rho_p + \bar{\sigma}_2 + \kappa_{\theta}\right)^{p+1} p^{p-1}}{\eta^p (p-1)! \tau_0}\right) \right\rceil .\]
\end{lemma}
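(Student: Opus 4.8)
The plan is to prove the displayed inequality by induction on $j$, reading it as the statement that the terminating condition of the inner loop is \emph{forced} to hold once $\tau_j$ is large enough; the bound on $T_3$ then drops out from the geometric growth of $\tau$. Throughout I would write $A_j := \frac{\Pi_{\ell=1}^{p+1}(j+\ell)}{(p+1)!} = \binom{j+p+1}{p+1}$, $w_j := \frac{\Pi_{\ell=2}^{p+1}(j+\ell)}{p!} = \binom{j+p+1}{p}$, and $\g_{j+1} := \nabla f(\bar\x_{j+1}) + \bar\xi_{j+1}$. Two combinatorial facts drive everything: $A_{j+1} = A_j + w_j$, and, crucially, $w_j/A_{j+1} = (p+1)/(j+p+2)$, so that the update $\y_j = \frac{j+1}{j+p+2}\bar\x_j + \frac{p+1}{j+p+2}\z_j$ is exactly $A_{j+1}\y_j = w_j\z_j + A_j\bar\x_j$. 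The base case $j=0$ holds with equality, since $\z_0=\bar\x_0$, $R(\bar\x_0)=0$, and $\psi_0(\z,\tau_0) = F(\bar\x_0) + \tau_0 R(\z) \ge F(\bar\x_0) = A_0 F(\bar\x_0)$.

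For the inductive step I would first peel off the increment. Since $\tau_{j+1}\ge\tau_j$ and $R\ge0$, every $\z$ satisfies $\psi_{j+1}(\z,\tau_{j+1}) \ge \psi_j(\z,\tau_j) + \Delta l_j(\z;\bar\x_{j+1},\bar\xi_{j+1})$, while Lemma~\ref{Lemma:Unified-Auxillary-Minimizer} with $q=p+1$ and the inductive hypothesis give $\psi_j(\z,\tau_j) \ge A_j F(\bar\x_j) + \frac{\tau_j}{2^p(p+1)}\|\z-\z_j\|^{p+1}$. Expanding $\Delta l_j$ and subtracting the target $A_{j+1}F(\bar\x_{j+1}) = A_j F(\bar\x_{j+1}) + w_j F(\bar\x_{j+1})$, the goal $\psi_{j+1}(\z_{j+1},\tau_{j+1})\ge A_{j+1}F(\bar\x_{j+1})$ reduces to showing, for all $\z$,
\[ w_j(\z-\bar\x_{j+1})^\top\g_{j+1} + \frac{\tau_j}{2^p(p+1)}\|\z-\z_j\|^{p+1} \ge A_j\left(F(\bar\x_{j+1}) - F(\bar\x_j)\right). \]
I would bound the right-hand side by convexity of $F$, namely $F(\bar\x_{j+1})-F(\bar\x_j) \le (\bar\x_{j+1}-\bar\x_j)^\top\g_{j+1}$, minimize the left-hand side over $\z$ through the elementary inequality~\eqref{Inequality:First-Second-Order-General}, and use $A_{j+1}\y_j = w_j\z_j + A_j\bar\x_j$ to collapse every term linear in $\g_{j+1}$. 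The inequality then condenses to
\[ A_{j+1}(\y_j-\bar\x_{j+1})^\top\g_{j+1} \ge \frac{2p}{p+1}\, w_j^{(p+1)/p}\,\|\g_{j+1}\|^{(p+1)/p}\,\tau_j^{-1/p}. \]

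The finishing move is to insert the two algorithmic estimates. The successful-iteration test $\theta(\bar\x_{j+1},\y_j,\bar\xi_{j+1})\ge\eta$ lower-bounds the left side by $A_{j+1}\eta\|\y_j-\bar\x_{j+1}\|^{p+1}$, and Lemma~\ref{Lemma:Second-Order-Gradient} upper-bounds $\|\g_{j+1}\| \le (\rho_p+\bar\sigma_2+\kappa_\theta)\|\y_j-\bar\x_{j+1}\|^p$; the common factor $\|\y_j-\bar\x_{j+1}\|^{p+1}$ cancels, leaving a pure threshold condition on $\tau_j$. Solving it and simplifying with $w_j = \frac{p+1}{j+p+2}A_{j+1}$ together with the product estimate $(p+1)!A_{j+1} = \Pi_{\ell=2}^{p+2}(j+\ell) \le (j+p+2)^{p+1}$ strips away the $j$-dependence and yields the uniform threshold $\bar\tau$. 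I expect the principal obstacle to be exactly this constant bookkeeping: carrying the powers of $2$, $p$, $\eta$, and the factorials through the $p$-th-root minimization and checking that the $j$-dependent factor stays uniformly bounded (the product estimate is precisely what makes the threshold independent of $j$).

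The estimate on $T_3$ then follows by monotonicity. Once $\tau$ reaches $\bar\tau$ the terminating condition is guaranteed by the statement just proved, so $\tau$ is never driven beyond $\gamma_3\bar\tau$. As each update multiplies $\tau$ by $\gamma_3$ starting from $\tau_0$, the number of updates needed to cross $\bar\tau$ is at most $\lceil \log(\bar\tau/\tau_0)/\log\gamma_3\rceil$; allowing one final overshoot gives $T_3 \le 1 + \lceil \log(\bar\tau/\tau_0)/\log\gamma_3\rceil$, which is the claimed bound.
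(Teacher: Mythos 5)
Your proposal is correct and follows the paper's proof essentially step for step: the same induction, the same peeling of the increment via $\tau_{j+1}\ge\tau_j$ and $R\ge 0$, the uniform-convexity bound of Lemma~\ref{Lemma:Unified-Auxillary-Minimizer} plus the induction hypothesis, convexity of $F$ through the subgradient $\nabla f(\bar\x_{j+1})+\bar\xi_{j+1}$, the identity $A_{j+1}\y_j=w_j\z_j+A_j\bar\x_j$ (the paper's \eqref{Inequality:First-Order-AAS-Auxillary-Iterate}) to collapse the linear terms, inequality \eqref{Inequality:First-Second-Order-General} for the minimization in $\z$, the success test together with Lemma~\ref{Lemma:Second-Order-Gradient}, the product estimate to remove the $j$-dependence, and the geometric-growth count for $T_3$. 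Your binomial bookkeeping with $A_j$ and $w_j$ is only a notational repackaging of the paper's $\Pi_\ell$ products.

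The one place where you and the paper part ways is a constant, and there you are the more careful party. You invoke Lemma~\ref{Lemma:Unified-Auxillary-Minimizer} with its actual modulus $\frac{\tau_j}{2^p(p+1)}\|\z-\z_j\|^{p+1}$, i.e.\ you take $\sigma=\tau_j/2^p$ in \eqref{Inequality:First-Second-Order-General}; this is exactly what produces your factor $\frac{2p}{p+1}$, and solving the resulting condition for $\tau_j$ yields the threshold $\frac{2^p\left(\rho_p+\bar\sigma_2+\kappa_\theta\right)^{p+1}p^{p-1}}{\eta^p(p-1)!}$, not the stated $\frac{2\left(\rho_p+\bar\sigma_2+\kappa_\theta\right)^{p+1}p^{p-1}}{\eta^p(p-1)!}$. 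The paper instead writes $\frac{\tau_{j_0}}{2(p+1)}\|\z-\z_{j_0}\|^{p+1}$ in \eqref{Inequality:Induction} while citing that same lemma (equivalently, it applies \eqref{Inequality:First-Second-Order-General} with $\sigma=\tau_{j_0}/2$), which is how the constant $2$ in the statement arises; for $p=1$ the two coincide, but for $p\ge 2$ the paper's step is not justified by its own Lemma~\ref{Lemma:Unified-Auxillary-Minimizer}, whose guaranteed modulus is $2^{p-1}$ times smaller. So, strictly speaking, your argument proves the lemma with $2^p$ in place of $2$ — which is the version the paper's own machinery actually supports — and correspondingly adds roughly $\frac{(p-1)\log 2}{\log\gamma_3}$ to the bound on $T_3$. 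Neither change affects any downstream complexity result, since this threshold enters the iteration bounds only inside a logarithm; but be aware that reproducing the constant exactly as stated would require the unjustified strengthening the paper uses.
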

\begin{proof}
	We shall prove by induction. First of all, the base case of $j=0$ holds true due to the fact that $\psi_0(\z_0, \tau_0)= \min_{\z \in \br^d}F(\bar{\x}_0) + \frac{\tau_0\left\|\z - \bar \x_0\right\|^{p+1}}{2(p+1)} = F(\bar{\x}_0)$.
	Then, we assume the result to hold for some %we assume the conclusion is valid for some
	$j = j_0$. It remains to prove the result for the case $j = j_0+1$. By the induction hypothesis, and by Lemma~\ref{Lemma:Unified-Auxillary-Minimizer}, we have
	\begin{eqnarray}\label{Inequality:Induction}
	\psi_{j_0}(\z, \tau_{j_0}) & \geq & \psi_{j_0}(\z_{j_0}, \tau_{j_0}) + \frac{\tau_{j_0}\left\|\z - \z_{j_0}\right\|^{p+1}}{{2^p}(p+1)}\nonumber \\
	& \geq & \frac{\Pi_{\ell=1}^{p+1}(j_0+\ell)}{(p+1)!} F(\bar{\x}_{j_0}) + \frac{\tau_{j_0}\left\|\z - \z_{j_0}\right\|^{p+1}}{{2^p}(p+1)}.
	\end{eqnarray}
	Furthermore, observe that
	\begin{eqnarray}\label{Inequality:AAS-Auxillary}
	&   &	\psi_{j_0+1}(\z_{j_0+1}, \tau_{j_0+1}) \nonumber \\
	& = & \min_{\z\in\br^d} \ \psi_{j_0+1}(\z, \tau_{j_0+1}) \nonumber \\
	& = & \min_{\z\in\br^d} \ \left\{l_{j_0+1}(\z) + \tau_{j_0+1} R(\z)\right\} \nonumber \\
	& = & \min_{\z\in\br^d} \ \left\{l_{j_0}(\z) + \Delta l_{j_0}(\z; \bar{\x}_{j_0+1},\bar \xi_{j_0+1})+ \tau_{j_0} R(\z) + \left(\tau_{j_0+1} - \tau_{j_0}\right) R(\z)\right\} \nonumber \\
	& \geq & \min_{\z\in\br^d} \ \left\{\psi_{j_0}(\z, \tau_{j_0}) + \Delta l_{j_0}(\z;\bar{\x}_{j_0+1},\bar \xi_{j_0+1}\right\},
	\end{eqnarray}
	where the last inequality is because of the fact that $\tau_{j_0+1} \geq \tau_{j_0}$ and $R(\z) \geq 0$ for any $\z \in \br^d$, and
	\begin{eqnarray*}
		\Delta l_{j_0}(\z; \bar{\x}_{j_0+1},  \bar{\xi}_{j_0+1}) & = &
		\frac{\Pi_{\ell = 2}^{p+1}(j_0 + \ell)}{p!} \left[ F(\bar{\x}_{j_0+1}) +\left(\z - \bar{\x}_{j_0+1}\right)^\top\left(\nabla f(\bar{\x}_{j_0+1}) + \bar{\xi}_{j_0+1}\right)\right].
	\end{eqnarray*}
	Therefore, we have
	\begin{eqnarray}\label{Inequality:First-Order-AAS-Auxillary}
	& & \psi_{j_0}(\z, \tau_{j_0}) + \Delta l_{j_0}(\z, \bar{\x}_{j_0+1}) \nonumber \\
	& \overset{~\eqref{Inequality:Induction}}{\geq} & \frac{\Pi_{\ell=1}^{p+1}(j_0+\ell)}{(p+1)!} F(\bar{\x}_{j_0}) + \frac{\tau_{j_0}\left\|\z - \z_{j_0}\right\|^{p+1}}{{2^p}(p+1)} \nonumber \\
	& & +\frac{\Pi_{\ell = 2}^{p+1}(j_0 + \ell)}{p!} \left[ F(\bar{\x}_{j_0+1}) +\left(\z - \bar{\x}_{j_0+1}\right)^\top\left(\nabla f (\bar{\x}_{j_0+1}) + \bar{\xi}_{j_0+1}\right)\right] \nonumber \\
	& \overset{\text{Assumption~\ref{Assumption:Objective-Standard}}}{\geq} & \frac{\Pi_{\ell=1}^{p+1}(j_0+\ell)}{(p+1)!}\left[ F(\bar{\x}_{j_0+1}) +\left(\bar{\x}_{j_0} - \bar{\x}_{j_0+1}\right)^\top\left(\nabla f(\bar{\x}_{j_0+1}) + \bar{\xi}_{j_0+1}\right)\right] \nonumber \\
	& & + \frac{\Pi_{\ell = 2}^{p+1}(j_0 + \ell)}{p!} \left[ F(\bar{\x}_{j_0+1}) +\left(\z - \bar{\x}_{j_0+1}\right)^\top\left(\nabla f(\bar{\x}_{j_0+1}) + \bar{\xi}_{j_0+1}\right)\right]  \nonumber \\
	& & + \frac{\tau_{j_0}\left\|\z - \z_{j_0}\right\|^{p+1}}{{2^p}(p+1)} \nonumber \\
	& = & \frac{\Pi_{\ell=1}^{p+1}(j_0+1+\ell)}{(p+1)!}F(\bar \x_{j_0 +1}) + \frac{\tau_{j_0}\left\|\z - \z_{j_0}\right\|^{p+1}}{{2^p}(p+1)} \nonumber \\
	& & + \frac{\Pi_{\ell=1}^{p+1}(j_0+\ell)}{(p+1)!} \left(\bar{\x}_{j_0}-\bar{\x}_{j_0+1}\right)^\top\left(\nabla f(\bar{\x}_{j_0+1}) + \bar{\xi}_{j_0+1}\right) \nonumber \\
	& & + \frac{\Pi_{\ell = 2}^{p+1}(j_0 + \ell)}{p!} \left(\z - \bar{\x}_{j_0+1}\right)^\top\left(\nabla f(\bar{\x}_{j_0+1}) + \bar{\xi}_{j_0+1}\right),
	\end{eqnarray}
	{where the last equality is due to the fact that \begin{eqnarray*}\frac{\Pi_{\ell=1}^{p+1}(j_0+\ell)}{(p+1)!} +  \frac{\Pi_{\ell = 2}^{p+1}(j_0 + \ell)}{p!} &=&\frac{(j_0+1)\Pi_{\ell=2}^{p+1}(j_0+\ell) + (p+1)\Pi_{\ell = 2}^{p+1}(j_0 + \ell) }{(p+1)!}\\ &=&\frac{\Pi_{\ell=2}^{p+2}(j_0+\ell)}{(p+1)!}=\frac{\Pi_{\ell=1}^{p+1}(j_0+1+\ell)}{(p+1)!}.
	\end{eqnarray*}}
	Moreover, $\y_{j_0}$ in the algorithm is constructed to satisfy
	$ \y_{j_0} = \frac{j_0+1}{j_0+p+2}\bar{\x}_{j_0} + \frac{p+1}{j_0+p+2}\z_{j_0}$,
	and thus
	\begin{eqnarray} \label{Inequality:First-Order-AAS-Auxillary-Iterate}
	\frac{\Pi_{\ell =1}^{p+1}(j_0+\ell)}{(p+1)!} \bar{\x}_{j_0}
	& = & \frac{\Pi_{\ell=1}^{p+1}(j_0+1+\ell)}{(p+1)!} \left(\frac{j_0+1}{j_0+p+2}\bar{\x}_{j_0}\right) \nonumber \\
	& = & \frac{\Pi_{\ell=1}^{p+1}(j_0+1+\ell)}{(p+1)!} \left(\y_{j_0} - \frac{p+1}{j_0+p+2} \z_{j_0}\right) \nonumber \\
	& = & \frac{\Pi_{\ell=1}^{p+1}(j_0+1+\ell)}{(p+1)!} \y_{j_0} - \frac{\Pi_{\ell = 2}^{p+1}(j_0 + \ell)}{p!}\z_{j_0}.
	\end{eqnarray}
	Combining~\eqref{Inequality:AAS-Auxillary},~\eqref{Inequality:First-Order-AAS-Auxillary} and~\eqref{Inequality:First-Order-AAS-Auxillary-Iterate} yields
	\begin{eqnarray*}
		& & \psi_{j_0+1}(\z_{j_0+1}, \tau_{j_0+1}) \\
		& \geq & \min_{\z\in\br^d} \ \left\{\frac{\Pi_{\ell = 2}^{p+1}(j_0 + \ell)}{p!}\left(\z - \z_{j_0}\right)^\top \left(\nabla f(\bar{\x}_{j_0+1}) + \bar{\xi}_{j_0+1}\right) + \frac{\tau_{j_0}\left\|\z - \z_{j_0}\right\|^{p+1}}{{2^p}(p+1)}\right\} \\
		& & + \frac{\Pi_{\ell=1}^{p+1}(j_0+1+\ell)}{(p+1)!} \left(\y_{j_0} - \bar{\x}_{j_0+1}\right)^\top \left(\nabla f(\bar{\x}_{j_0+1}) + \bar{\xi}_{j_0+1}\right) + \frac{\Pi_{\ell=1}^{p+1}(j_0+1+\ell)}{(p+1)!}F(\bar{\x}_{j_0+1}).
	\end{eqnarray*}
	Furthermore, since $j_0$ is a successful iteration, we have
	\begin{eqnarray*}
		\left(\y_{j_0}-\bar{\x}_{j_0+1}\right)^\top\left(\nabla f(\bar{\x}_{j_0+1}) + \bar{\xi}_{j_0+1}\right) & \geq & \eta \left\| \y_{j_0}-\bar{\x}_{j_0+1} \right\|^{p+1} \\
		& \overset{\text{Lemma~\ref{Lemma:Second-Order-Gradient}}}\geq & \eta\left(\frac{\left\| \nabla f(\bar{\x}_{j_0+1}) + \bar{\xi}_{j_0+1}\right\|}{\rho_p + \bar \sigma_2 + \kappa_\theta}\right)^{1+\frac{1}{p}}.
	\end{eqnarray*}
	Thus, it suffices to establish
	\begin{eqnarray}\label{Inequlity:desired}
	\frac{\eta\Pi_{\ell=1}^{p+1}(j_0+1+\ell)}{(p+1)!} \left(\frac{\left\| \nabla f(\bar{\x}_{j_0+1}) + \bar{\xi}_{j_0+1}\right\|}{\rho_p + \bar{\sigma}_2 + \kappa_\theta} \right)^{1+\frac{1}{p}} + \frac{\tau_{j_0}\left\|\z - \z_{j_0}\right\|^{p+1}}{{2^p}(p+1)} & & \nonumber \\
	+ \frac{\Pi_{\ell = 2}^{p+1}(j_0 + \ell)}{p!}\left(\z - \z_{j_0}\right)^\top \left(\nabla f(\bar{\x}_{j_0+1}) + \bar{\xi}_{j_0+1}\right)  & \geq &  0,\; \forall\; \z \in \br^d.
	\end{eqnarray}
	Indeed, applying~\eqref{Inequality:First-Second-Order-General} with
	\[
	{\g }= \frac{\Pi_{\ell = 2}^{p+1}(j_0 + \ell)}{p!} \left(\nabla f(\bar{\x}_{j_0+1}) + \bar{\xi}_{j_0+1}\right), \quad \s = \z-\z_{j_0}, \quad \sigma = \frac{\tau_{j_0}}{{2^p}}, \quad q=p+1
	\]
	we obtain that
	\begin{eqnarray*}
		& & \frac{\Pi_{\ell = 2}^{p+1}(j_0 + \ell)}{p!}\left(\z - \z_{j_0}\right)^\top \left(\nabla f(\bar{\x}_{j_0+1}) + \bar{\xi}_{j_0+1}\right) + \frac{\tau_{j_0}\left\|\z - \z_{j_0}\right\|^{p+1}}{{2^p}(p+1)} \\
		& \geq & -\frac{p}{p+1}\left(\frac{{2^p}}{\tau_{j_0}}\right)^{\frac{1}{p}}\left(\frac{\Pi_{\ell = 2}^{p+1}(j_0 + \ell)}{p!} \left\|\nabla f(\bar{\x}_{j_0+1}) + \bar{\xi}_{j_0+1}\right\|\right)^{1+\frac{1}{p}}.
	\end{eqnarray*}
	Therefore,~\eqref{Inequlity:desired} is equivalent to
	\begin{eqnarray*}
		\tau_{j_0} & \geq & \frac{{2^p}\left(\rho_p + \bar{\sigma}_2 + \kappa_{\theta}\right)^{p+1}}{\eta^p} \left(\frac{p}{p+1} \right)^p\left(\frac{\Pi_{\ell = 2}^{p+1}(j_0 + \ell)}{p!}\right)^{p+1}\left(\frac{(p+1)!}{\Pi_{\ell=1}^{p+1}(j_0+1+\ell)}\right)^p \\
		&= & {\frac{2^p\left(\rho_p + \bar{\sigma}_2 + \kappa_{\theta}\right)^{p+1}}{\eta^p}\left(\Pi_{\ell = 2}^{p+1}(j_0 + \ell)\right) \left(\frac{\Pi_{\ell = 2}^{p+1}(j_0 + \ell)}{\Pi_{\ell=1}^{p+1}(j_0+1+\ell)}\right)^{p}\frac{p^p}{p!}}\\
		& = & \frac{{2^p}\left(\rho_p + \bar{\sigma}_2 + \kappa_{\theta}\right)^{p+1}}{\eta^p}\left(\frac{\Pi_{\ell = 2}^{p+1}(j_0 + \ell)}{(j_0+p+2)^p}\right)\frac{p^p}{p!}.
	\end{eqnarray*}
	Now, observe that the {first part of the} conclusion would follow if
	\[
	\tau_{j_0} \ge \frac{{2^p}\left(\rho_p + \bar{\sigma}_2 + \kappa_{\theta}\right)^{p+1}}{\eta^p}\frac{p^{p-1}}{(p-1)!}
	\]
	holds, which is the condition of the lemma.
	
	{To prove the remaining part of the conclusion, we note that $\tau_j$ can only be updated in the successful iteration of AAS and it increases by a factor of $\gamma_3$ when updated. Recall that $T_3$ is the total number of updating counts for $\tau_j$. Then according to the first part of the conclusion, $\tau_j$ will not be updated once
		$$
		\tau_0 \gamma_3^{T_3} \ge \frac{2^p\left(\rho_p + \bar{\sigma}_2 + \kappa_{\theta}\right)^{p+1}}{\eta^p}\frac{p^{p-1}}{(p-1)!},
		$$
		which means that $T_3$ is the least integer that makes the inequality above hold and thus the conclusion follows.}
\end{proof}
Now, we analyze the initial iterate in AAS, which is also the re-initialized iterate returned by SAS.
\begin{theorem} \label{Thm:base-case}
	Let $\bar{\x}_0$ be the initial iterate in AAS of Algorithm \ref{Algorithm:Framework}, then by letting $\hat{\sigma}_1:=\max\left\{\bar{\sigma}_1, \frac{L_p}{(p-1)!} \right\}$ we have that
	\begin{eqnarray*}
		F(\bar{\x}_0) \le \psi_0(\z, \tau_0) 
		&\leq& F(\z) + \frac{(p+1)\kappa_p + {\hat{\sigma}_1}}{p+1}\left\|\z-\x_0\right\|^{p+1}  \\
		& &+ \bar{\kappa}_p \left\|\z-\x_0\right\|^p +\frac{\tau_0\left\|\z - \bar \x_0\right\|^{p+1}}{2(p+1)} {+ (\kappa_\theta + \hat{\sigma}_1) (2D)^{p+1}},
	\end{eqnarray*}
\end{theorem}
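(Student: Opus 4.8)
The plan is to strip off the regularizer, which appears identically on both sides, and reduce the claim to an upper bound on $F(\bar{\x}_0)$ alone. Since $l_0(\z)\equiv F(\bar{\x}_0)$ and $R(\z)=\frac{1}{2(p+1)}\|\z-\bar{\x}_0\|^{p+1}\ge 0$, we have $\psi_0(\z,\tau_0)=F(\bar{\x}_0)+\frac{\tau_0}{2(p+1)}\|\z-\bar{\x}_0\|^{p+1}$; with $\tau_0>0$ the left inequality $F(\bar{\x}_0)\le\psi_0(\z,\tau_0)$ is immediate, and after cancelling the common term $\frac{\tau_0\|\z-\bar{\x}_0\|^{p+1}}{2(p+1)}$ the right inequality reduces to bounding $F(\bar{\x}_0)$ by $F(\z)$ plus the three error terms. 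I would then recall that $\bar{\x}_0$ is returned by SAS at its unique successful iteration, so $\bar{\x}_0\approx\argmin_{\x}m(\x;\x_0,\sigma)$ in the sense of Definition~\ref{Definition:Approximate-Model} for some $\sigma\in[\sigma_{\min},\bar{\sigma}_1]$ (the upper bound $\bar{\sigma}_1$ supplied by Lemma~\ref{Lemma:Unified-SAS}), and the success test guarantees $F(\bar{\x}_0)\le m(\bar{\x}_0;\x_0,\sigma)$.

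Next I would transfer this bound from $\bar{\x}_0$ to an arbitrary $\z$ using convexity. Writing $g=\nabla\overline{m}(\bar{\x}_0;\x_0)+\sigma\|\bar{\x}_0-\x_0\|^{p-1}(\bar{\x}_0-\x_0)+\bar{\xi}$ with $\bar{\xi}\in\partial r(\bar{\x}_0)$, this $g$ is a subgradient of $m(\cdot;\x_0,\sigma)$ at $\bar{\x}_0$, and \eqref{Criterion:Approximate-Adaptive} gives $\|g\|\le\kappa_\theta\|\bar{\x}_0-\x_0\|^p$. Convexity of $m(\cdot;\x_0,\sigma)$ yields $m(\z;\x_0,\sigma)\ge m(\bar{\x}_0;\x_0,\sigma)+g^\top(\z-\bar{\x}_0)$, hence $m(\bar{\x}_0;\x_0,\sigma)\le m(\z;\x_0,\sigma)+\|g\|\,\|\z-\bar{\x}_0\|\le m(\z;\x_0,\sigma)+\kappa_\theta\|\bar{\x}_0-\x_0\|^p\|\z-\bar{\x}_0\|$. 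Combined with the success test, $F(\bar{\x}_0)\le m(\z;\x_0,\sigma)+\kappa_\theta\|\bar{\x}_0-\x_0\|^p\|\z-\bar{\x}_0\|$.

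To convert $m(\z;\x_0,\sigma)$ into $F(\z)$ plus Taylor residuals I would invoke property~(i): by \eqref{Def:Effective-Objective-All}, $\overline{m}(\z;\x_0)\le f(\z)+\bar{\kappa}_p\|\z-\x_0\|^p+\kappa_p\|\z-\x_0\|^{p+1}$, so adding $\frac{\sigma}{p+1}\|\z-\x_0\|^{p+1}+r(\z)$ and using $\sigma\le\bar{\sigma}_1$ gives $m(\z;\x_0,\sigma)\le F(\z)+\bar{\kappa}_p\|\z-\x_0\|^p+\frac{(p+1)\kappa_p+\bar{\sigma}_1}{p+1}\|\z-\x_0\|^{p+1}$. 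For the residual term I would use Assumption~\ref{Asu:bounded-levelset}: since $\frac{\sigma\|\cdot\|^{p+1}}{p+1}$ is nondecreasing in $\sigma$ and $\sigma\ge\sigma_{\min}$, the relation $m(\bar{\x}_0;\x_0,\sigma)\le m(\x_0;\x_0,\sigma)=F(\x_0)$ forces $m(\bar{\x}_0;\x_0,\sigma_{\min})\le F(\x_0)$, i.e. $\bar{\x}_0\in\LCal(\x_0,\sigma_{\min})$, with $\x_0$ trivially in the same level set; Assumption~\ref{Asu:bounded-levelset} then gives $\|\bar{\x}_0-\x^*\|\le D$ and $\|\x_0-\x^*\|\le D$, so $\|\bar{\x}_0-\x_0\|\le 2D$, and $\|\z-\bar{\x}_0\|\le\|\z-\x^*\|+\|\x^*-\bar{\x}_0\|\le D+\|\x^*-\bar{\x}_0\|$ in the regime $\|\z-\x^*\|\le D$ in which the bound is used. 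Substituting these into $\kappa_\theta\|\bar{\x}_0-\x_0\|^p\|\z-\bar{\x}_0\|\le\kappa_\theta(2D)^p(D+\|\x^*-\bar{\x}_0\|)$ and adding $\tau_0R(\z)$ back reproduces the claimed bound (so the term written $\|\x^*-\bar{\x}^m_0\|$ is read as $\|\x^*-\bar{\x}_0\|$).

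The step I expect to be delicate is the convexity of $m(\cdot;\x_0,\sigma)$ used in the second paragraph. For $p\le 2$, including the inexact-Hessian model with $H(\x_0)\succeq 0$, the effective approximation $\overline{m}$ is itself convex, so $m$ is convex and $g$ is a genuine subgradient. For general $p$ the Taylor model $\tilde f_p$ need not be convex, and $m(\cdot;\x_0,\sigma)$ is guaranteed convex only once $\sigma\ge L_p(p-1)!$; this is exactly why SAS is augmented with the additional check that $m(\cdot;\x_0,\sigma)$ be convex at the accepted iterate (a tractable positive-semidefiniteness test on its Hessian), which secures convexity precisely at the point where it is needed here.
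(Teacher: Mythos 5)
Your proposal follows the paper's skeleton in most respects (the SAS success test, the bound \eqref{Def:Effective-Objective-All} on $m(\z;\x_0,\sigma)$, the level-set bound from Assumption~\ref{Asu:bounded-levelset}, and the subgradient inequality combined with \eqref{Criterion:Approximate-Adaptive}), but it deviates at one point, and that deviation is a genuine gap. You apply the subgradient inequality of $m(\cdot\,;\x_0,\sigma)$ directly between $\bar{\x}_0$ and the arbitrary comparison point $\z$, which produces the error term $\kappa_\theta\|\bar{\x}_0-\x_0\|^p\|\z-\bar{\x}_0\|$. This term grows linearly in $\|\z\|$, whereas the theorem claims a constant, $\z$-independent error term; to recover a constant you are forced to restrict to $\|\z-\x^*\|\le D$, and you only flag this restriction rather than remove it. The theorem as stated---and, strictly, the induction in Theorem~\ref{Thm:Func-Val-Bound} that uses it as the base case---is an inequality in which $\z$ ranges over all of $\br^d$, so what you prove is a weaker statement. (It happens to suffice for the final complexity bound, since the chain of inequalities carries the restriction along and is ultimately evaluated at $\z=\x^*$, but that is an additional argument you would need to supply.)

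The paper's device for closing exactly this gap is the object you chose to ``read away'': $\bar{\x}^m_0$ is not a typo for $\bar{\x}_0$; it is the \emph{exact global minimizer} of $m(\cdot\,;\x_0,\sigma_i)$. The paper splits $m(\bar{\x}_0;\x_0,\sigma_i)=\left[m(\bar{\x}_0;\x_0,\sigma_i)-m(\bar{\x}^m_0;\x_0,\sigma_i)\right]+m(\bar{\x}^m_0;\x_0,\sigma_i)$, applies the subgradient inequality only between the two \emph{fixed} points $\bar{\x}_0$ and $\bar{\x}^m_0$---both of which lie in $\LCal(\x_0,\sigma_{\min})$, so that $\|\bar{\x}_0-\x_0\|\le 2D$ and $\|\bar{\x}_0-\bar{\x}^m_0\|\le D+\|\x^*-\bar{\x}^m_0\|\le 2D$---and then invokes global optimality, $m(\bar{\x}^m_0;\x_0,\sigma_i)\le m(\z;\x_0,\sigma_i)$ for \emph{every} $\z\in\br^d$, to bring in the arbitrary point. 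This decouples the error term from $\z$ and yields the stated bound $\kappa_\theta(2D)^p\left(D+\|\x^*-\bar{\x}^m_0\|\right)$ uniformly in $\z$. Your remaining steps---cancelling $\tau_0R(\z)$ from both sides, the bound $m(\z;\x_0,\sigma)\le F(\z)+\bar{\kappa}_p\|\z-\x_0\|^p+\frac{(p+1)\kappa_p+\bar{\sigma}_1}{p+1}\|\z-\x_0\|^{p+1}$, and the discussion of convexity of $m(\cdot\,;\x_0,\sigma)$ including the high-order case---match the paper and are sound.
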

\begin{proof}
	Recall that $F(\bar{\x}_0) = \min_{\z \in \br^d} \ \left\{F(\bar{\x}_0) + \tau_0 R(\z) \right\} = \psi_0(\z_0, \tau_0)$. It suffices to show the inequality on the right hand side. Denote $\x_0 \in \br^d$ to be the initial iterate of SAS, {$\sigma^{SAS}$ be the regularized parameter associated with $\bar{\x}_0$}, and $\bar{\x}_0^m \in \br^d$ to be the global minimizer of $m(\x;\x_0, \sigma^{SAS})$ over $\br^d$. Since  $\bar{\x}_0$ is also the output returned by SAS, it holds that $m(\bar{\x}_0^m;\x_0, \sigma^{SAS})  \leq  m(\bar{\x}_0;\x_0, \sigma^{SAS}) \le m({\x}_0;\x_0, \sigma^{SAS}) = F({\x}_0)$. Moreover, {the updating rule of $\sigma_i$ in SAS implies to that $\sigma^{SAS} \ge \sigma_{\min}$}, which further indicates $m(\x;\x_0, \sigma_{\min}) \le m(\x;\x_0,  \sigma^{SAS}) $ for all $\x$ and thus $\mathcal{L}(\x_0,\sigma^{SAS}) \subseteq \mathcal{L}(\x_0,\bar \sigma_{\min})$. Then according to \eqref{Bounded-levelset},
	\begin{equation}\label{Bounded-levelset-x}
	\| \bar \x_0 - \x^* \| \le D \quad \mbox{and} \quad \| \bar \x_0^m - \x^* \| \le D.
	\end{equation}
	{ If $\overline{m}(\y;\x)$ is convex then ${m}(\y;\x,\sigma)$ is convex as well. Moreover, as we mentioned earlier, ${m}(\y;\x,\sigma)$ is { not} necessarily convex for high-order adaptive accelerating method.
		In this case, let $\sigma_0^{AAS}=\max\{\sigma^{SAS},  -\lambda_{\min}\left(\nabla^2\overline{m}(\bar{\x}_0;{\x_0}) \right)/\|\bar{\x}_0 - \x_0\|^{p-1}\}$. According to the discussion above \eqref{sigma-0-AAS}, $m(\y;\x_0, \sigma_0^{AAS})$ is convex at $\bar{\x}_0$ and
		\[
		F(\bar{\x}_0)  \leq  m(\bar{\x}_0;\x_0, \sigma_0^{AAS}) =  m(\bar{\x}_0;\x_0, \sigma_0^{AAS}) - m(\bar{\x}^m_0; \x_0,  \sigma_0^{AAS}) + m(\bar{\x}^m_0; \x_0,  \sigma_0^{AAS}).
		\]
		Moreover, combining equality (2.3) in \cite{Nesterov-2018} and \eqref{Hessian-p-power} yields
		$$\nabla^2 f(\y) \preceq \nabla^2 \overline{m}(\y; \x) +  \frac{L_p \|\y - \x\|^{p-1}}{(p-1)!}  I \preceq \nabla^2 \overline{m}(\y; \x) + \sigma \|\y - \x\|^{p-1} I  $$
		when $\sigma \ge \frac{L_p}{(p-1)!}$ and ${m}(\y; \x,\sigma)$ is a convex function for any $\x$.
		Therefore,
		\begin{equation}\label{upper-bound-sigma0-AAS}
		\sigma_0^{AAS} \le \hat{\sigma}_1=\max\left\{\bar{\sigma}_1, \frac{L_p}{(p-1)!} \right\},
		\end{equation}
		and there exists some $\bar{\xi}_0 \in \partial r(\bar{\x}_0)$ (e.g.\ $\bar{\xi}_0$ could be the one that validates \eqref{Criterion:Approximate-Adaptive}) such that
		\begin{eqnarray*}
			& & m(\bar{\x}_0;\x_0, \sigma_0^{AAS}) - m(\bar{\x}^m_0; \x_0,  \sigma_0^{AAS}) \\
			& \leq & - (\nabla \overline{m}(\bar\x_{0};\x_0)+ \sigma_0^{AAS} \left\|\bar\x_{0} - \x_0\right\|^{p-1} \left(\bar \x_{0} - \x_0\right) + \bar\xi_{0})^{\top}(\bar \x_0^{m} - \bar\x_0)\\
			& \leq & \left\|\nabla \overline{m}(\bar\x_{0};\x_0)+ \sigma^{SAS} \left\|\bar\x_{0} - \x_0\right\|^{p-1} \left(\bar \x_{0} - \x_0\right) + \bar\xi_{0}\right\| \left\|\bar\x_0 - \bar \x_0^{m}\right\| \\
			&& + \left\| (\sigma_0^{AAS} - \sigma^{SAS})\left\|\bar\x_{0} - \x_0\right\|^{p-1} \left(\bar \x_{0} - \x_0\right)\right\| \cdot \left\|\bar\x_0 - \bar \x_0^{m}\right\| \\ &\overset{~\eqref{Criterion:Approximate-Adaptive},\eqref{upper-bound-sigma0-AAS}}{\leq}& (\kappa_\theta + \hat{\sigma}_1) \left\| \bar{\x}_0 - \x_0 \right\|^p \left\| \bar{\x}_0 - \bar{\x}^m_0 \right\|.
		\end{eqnarray*}
	}
	On the other hand,
	\begin{eqnarray*}
		m\left(\bar{\x}^m_0; \x_0, \sigma_0^{AAS}\right) & = & \overline{m}(\bar \x_0^m ; \x_0) + \frac{\sigma_0^{AAS}\left\|\bar{\x}^m_0 - \x_0\right\|^{p+1}}{p+1} + r(\bar{\x}^m_0)   \\
		& \leq & \overline{m}(\z ; \x_0) + \frac{\sigma_0^{AAS}\left\|\z - \x_0\right\|^{p+1}}{p+1} + r(\z) \\
		& \overset{\eqref{Def:Effective-Objective-All}}{\leq} & f(\z) + \kappa_{p}\left\| \z - \x_0 \right\|^{p+1} + \bar{\kappa}_p \| \z - \x_0 \|^p + \frac{\sigma_0^{AAS} \left\|\z - \x_0 \right\|^{p+1}}{p+1}  + r(\z) \\
		& \overset{\eqref{upper-bound-sigma0-AAS}}{\leq} & F(\z) + \frac{(p+1)\kappa_p + {\hat{\sigma}_1}}{p+1}\left\|\z-\x_0\right\|^{p+1} + \bar{\kappa}_p \left\|\z-\x_0\right\|^p .
	\end{eqnarray*}
	Combining the above two inequalities, we have
	\begin{eqnarray*}
		\psi_0(\z, \tau_0) & = & F(\bar{\x}_0) + \frac{\tau_0\left\|\z - \bar{\x}_0\right\|^{p+1}}{2(p+1)}\\
		&\leq & \left( m(\bar{\x}_0;\x_0, \sigma_0^{AAS}) - m(\bar{\x}^m_0; \x_0,  \sigma_0^{AAS})\right) + m(\bar{\x}^m_0; \x_0,  \sigma_0^{AAS}) + \frac{\tau_0\left\|\z - \bar \x_0\right\|^{p+1}}{2(p+1)} \\
		&\leq & F(\z) + \frac{(p+1)\kappa_p + {\hat{\sigma}_1}}{p+1} \left\|\z-\x_0\right\|^{p+1} + \bar{\kappa}_p \left\|\z-\x_0\right\|^p +\frac{\tau_0\left\|\z - \bar \x_0\right\|^{p+1}}{2(p+1)} \\
		& & + (\kappa_\theta+{\hat{\sigma}_1}) \left\| \bar{\x}_0 - \x_0 \right\|^p \left\|\bar{\x}_0 - \bar{\x}^m_0\right\| \\
		& \leq & F(\z) + \frac{(p+1)\kappa_p + {\hat{\sigma}_1}}{p+1} \left\|\z-\x_0\right\|^{p+1} + \bar{\kappa}_p \left\|\z-\x_0\right\|^p +\frac{\tau_0\left\|\z - \bar \x_0\right\|^{p+1}}{2(p+1)} \\
		& & + (\kappa_\theta + {\hat{\sigma}_1})(2D)^{p+1},
	\end{eqnarray*}
	{where} the last inequality is due to \eqref{Bounded-levelset-x} and \eqref{Bounded-levelset}.
\end{proof}
Next, we proceed to analyzing all the iterates in AAS.
\begin{theorem}\label{Thm:Func-Val-Bound}
	The sequence $\{\bar{\x}_j, \ j\geq 0\}$  generated by \textsf{AAS} in \textsf{UAA} satisfies
	\begin{eqnarray} \label{KeyInquality:F}
	& & \frac{\Pi_{\ell=1}^{p+1}(j+\ell)}{(p+1)!} F(\bar{\x}_j) \ \leq \  \psi_{j}(\z, \tau_{j}) %\nonumber\\
	\leq  \frac{\Pi_{\ell=1}^{p+1}(j+\ell)}{(p+1)!} F(\z)
	+  \frac{(p+1)\kappa_p +{\hat{\sigma}_1}}{p+1} \left\|\z-\x_0\right\|^{p+1} \nonumber \\
	& & \quad \quad \quad \quad + \bar{\kappa}_p \left\|\z-\x_0\right\|^p  + \frac{{ \tau_j}\left\|\z - \bar \x_0\right\|^{p+1}}{2(p+1)}+ (\kappa_\theta + {\hat{\sigma}_1}) (2D)^{p+1}.
	\end{eqnarray}
\end{theorem}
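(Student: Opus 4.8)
The plan is to prove the two inequalities in \eqref{KeyInquality:F} separately: the left one is essentially a restatement of the algorithm's stopping rule, while the right one I would obtain by induction on the successful-iteration index $j$, anchored at Theorem~\ref{Thm:base-case}. For the left inequality, note that because $\z_j = \argmin_{\z} \psi_j(\z,\tau_j)$ we have $\psi_j(\z,\tau_j) \ge \psi_j(\z_j,\tau_j)$ for every $\z \in \br^d$; and the inner \textsf{REPEAT} loop of \textsf{AAS} terminates only when its \textsf{UNTIL} guard $\psi_j(\z_j,\tau_j) \ge \frac{\Pi_{\ell=1}^{p+1}(j+\ell)}{(p+1)!}F(\bar{\x}_j)$ holds, with Lemma~\ref{Lemma:AAS-Auxillary} guaranteeing that this guard is met after finitely many $\tau$-inflations. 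Chaining the two facts yields the left inequality for all $j$ and all $\z$.

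For the right inequality I would set $A_j := \frac{\Pi_{\ell=1}^{p+1}(j+\ell)}{(p+1)!}$ and $B_j := \frac{\Pi_{\ell=2}^{p+1}(j+\ell)}{p!}$, and first record two facts. \emph{(i)} Since $\nabla f(\bar{\x}_{j+1}) + \bar{\xi}_{j+1} \in \partial F(\bar{\x}_{j+1})$, convexity of $F$ (Assumption~\ref{Assumption:Objective-Standard}) gives the subgradient inequality $F(\bar{\x}_{j+1}) + (\z-\bar{\x}_{j+1})^\top(\nabla f(\bar{\x}_{j+1})+\bar{\xi}_{j+1}) \le F(\z)$, so that $\Delta l_j(\z;\bar{\x}_{j+1},\bar{\xi}_{j+1}) \le B_j F(\z)$. \emph{(ii)} The Pascal-type identity $A_j + B_j = A_{j+1}$ holds, since $A_j = \binom{j+p+1}{p+1}$, $B_j = \binom{j+p+1}{p}$ and $A_{j+1} = \binom{j+p+2}{p+1}$. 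The base case $j=0$ has $A_0 = 1$ and is exactly Theorem~\ref{Thm:base-case}; moreover, cancelling $\tau_0 R(\z)$ from both sides of that theorem (where $R(\z) = \frac{\|\z-\bar{\x}_0\|^{p+1}}{2(p+1)}$ and $\psi_0(\z,\tau_0) = F(\bar{\x}_0) + \tau_0 R(\z)$) furnishes the residual-free bound $F(\bar{\x}_0) \le F(\z) + E(\z)$, where $E(\z) := \frac{(p+1)\kappa_p + \bar{\sigma}_1}{p+1}\|\z-\x_0\|^{p+1} + \bar{\kappa}_p\|\z-\x_0\|^p + \kappa_\theta(2D)^{p+1}$ is $j$-independent.

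For the inductive step I would write $\psi_{j+1}(\z,\tau_{j+1}) = l_j(\z) + \Delta l_j(\z;\bar{\x}_{j+1},\bar{\xi}_{j+1}) + \tau_{j+1}R(\z)$, replace $l_j(\z)$ by $\psi_j(\z,\tau_j) - \tau_j R(\z)$, invoke the inductive hypothesis on $\psi_j(\z,\tau_j)$, and then apply facts \emph{(i)}--\emph{(ii)} to advance the $F(\z)$-coefficient from $A_j$ to $A_{j+1}$. Equivalently, expanding $l_j(\z) = F(\bar{\x}_0) + \sum_{k=0}^{j-1}\Delta l_k(\z;\bar{\x}_{k+1},\bar{\xi}_{k+1})$ as a single telescoping sum and using $\sum_{k=0}^{j-1} B_k = A_j - 1$ together with the residual-free base bound gives $l_j(\z) \le A_j F(\z) + E(\z)$ directly, whence $\psi_j(\z,\tau_j) = l_j(\z) + \tau_j R(\z)$ is bounded as claimed up to the regularization coefficient.

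The step where the bookkeeping must be done with care---and what I expect to be the main obstacle---is precisely that regularization coefficient. The natural estimates above produce $\psi_j(\z,\tau_j) \le A_j F(\z) + E(\z) + \tau_j R(\z)$, i.e.\ with the \emph{running} multiplier $\tau_j$, whereas the target in \eqref{KeyInquality:F} carries the \emph{initial} multiplier $\tau_0$. Reconciling the two amounts to absorbing the nonnegative increments $(\tau_{j+1}-\tau_j)R(\z)$ introduced by the auxiliary-model inflation loop, which I would handle by invoking the uniform boundedness of $\{\tau_j\}$ established in Lemma~\ref{Lemma:AAS-Auxillary}; the remaining pieces $E(\z)$ and $\kappa_\theta(2D)^{p+1}$ enter only once, through the base case, so no further accumulation occurs and the two-sided estimate \eqref{KeyInquality:F} follows.
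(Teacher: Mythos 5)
Your proposal is correct and follows essentially the same route as the paper's proof: the left inequality comes from the \textsf{UNTIL} guard combined with minimality of $\z_j$, and the right inequality is proved by induction anchored at Theorem~\ref{Thm:base-case}, driven by exactly the two facts you record --- the subgradient inequality $\Delta l_j(\z;\bar{\x}_{j+1},\bar{\xi}_{j+1}) \le B_j F(\z)$ and the Pascal identity $A_j + B_j = A_{j+1}$, which the paper uses implicitly when merging the two $F(\z)$-coefficients. Your telescoping variant is an equivalent repackaging of the same induction.

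What you singled out as the delicate point is, in fact, a genuine flaw in the paper's own proof, and your handling of it is the more careful one. In the paper's inductive step, $\psi_{j_0+1}(\z,\tau_{j_0+1})$ is rewritten as $\psi_{j_0}(\z, \tau_{j_0}) + \Delta l_{j_0}(\z; \bar{\x}_{j_0+1}, \bar{\xi}_{j_0+1}) + \frac{(\tau_{j_0+1} - \tau_{j_0})\left\|\z-\x_0\right\|^{p+1}}{2(p+1)}$, and the third term is then silently dropped from the ensuing upper bound, with the justification that ``$\tau_j$ is monotonically increasing.'' But monotonicity makes that term \emph{nonnegative}, and a nonnegative term cannot be discarded when establishing an upper bound. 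Consequently the induction as written only delivers the estimate with the running coefficient $\tau_j$ --- precisely what your ``natural estimates'' produce --- and not with $\tau_0$ as displayed in \eqref{KeyInquality:F}. Your repair via the uniform boundedness of $\{\tau_j\}$ (Lemma~\ref{Lemma:AAS-Auxillary} implies that after finitely many inflations $\tau_j \le \bar{\tau}$ for a constant $\bar{\tau}$ depending only on $\gamma_3$, $\tau_0$, $\rho_p$, $\bar{\sigma}_2$, $\kappa_\theta$, $\eta$ and $p$) is the right fix, with one caveat: what it proves is \eqref{KeyInquality:F} with $\tau_0$ replaced by $\bar{\tau}$, i.e.\ a corrected version of the theorem rather than its literal statement. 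This is harmless downstream: Theorem~\ref{Thm:iteration-complexity-uaa} goes through with $\tau_0$ replaced by $\bar{\tau}$ in the constant $C$, and the $O(1/\varepsilon^{1/(p+1)})$ iteration bound is unaffected.
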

\begin{proof} By the way in which $\psi_{j+1}(\z_{j+1}, \tau_{j+1})$ is updated in AAS, we have
	\[
	\frac{\Pi_{\ell=1}^{p+1}(j+1+ \ell)}{(p+1)!} F(\bar{\x}_{j+1}) \ \leq \ \psi_j(\z_{j+1}, \tau_{j+1}) \ \leq \ \psi_j(\z, \tau_{j+1}), \quad \forall \; j \geq 0.
	\]
	It thus suffices to show the inequality on the right hand side by induction. The base case of $j=0$ has already been proved in Theorem \ref{Thm:base-case}. We now assume the result holds for some $j = j_0$. For the case $j = j_0+1$, indeed we have
	\begin{eqnarray*}
		&& \psi_{j_0+1}(\z_{j_0+1}, \tau_{j_0+1}) \\
		& \leq & \psi_{j_0+1}(\z, \tau_{j_0+1})  \\
		& = & l_{j_0}(\z) + \Delta l_{j_0}(\z; \bar{\x}_{j_0+1}, \bar{\xi}_{j_0+1}) + \frac{\tau_{j_0+1}\left\|\z-\x_0\right\|^{p+1}}{2(p+1)} \\
		& = & \psi_{j_0}(\z, \tau_{j_0}) + \Delta l_{j_0}(\z; \bar{\x}_{j_0+1}, \bar{\xi}_{j_0+1}) + \frac{\left(\tau_{j_0+1} - \tau_{j_0}\right)\left\|\z-\x_0\right\|^{p+1}}{2(p+1)} \\
		& \leq & \frac{\Pi_{\ell=1}^{p+1}(j_0+\ell)}{(p+1)!} F(\z)  +   \frac{(p+1)\kappa_p +  {\hat{\sigma}_1}}{p+1} \left\|\z-\x_0\right\|^{p+1} + \, \bar{\kappa}_p \left\|\z-\x_0\right\|^p + \frac{{\tau_{j_0}}\left\|\z - \bar \x_0\right\|^{p+1}}{2(p+1)} \\
		& & + (\kappa_\theta + {\hat{\sigma}_1}) (2D)^{p+1} + \frac{\Pi_{\ell=2}^{p+1}(j_0+\ell)}{p!}\left[ F(\bar{\x}_{j_0+1}) + \left(\z - \bar{\x}_{j_0+1}\right)^\top \left({\nabla f}(\bar{\x}_{j_0+1}) + \bar{\xi}_{j_0+1} \right)\right]  \\
		& \overset{\text{Assumption~\ref{Assumption:Objective-Standard}}}{\leq} & \frac{\Pi_{\ell=1}^{p+1}(j_0+1+\ell)}{(p+1)!} F(\z)  +   \frac{(p+1)\kappa_p + {\hat{\sigma}_1}}{p+1} \left\|\z-\x_0\right\|^{p+1} + \, \bar{\kappa}_p \left\|\z-\x_0\right\|^p \\
		& & + \frac{{\tau_{j_0}}\left\|\z - \bar \x_0\right\|^{p+1}}{2(p+1)}+ (\kappa_\theta + {\hat{\sigma}_1}) (2D)^{p+1},
	\end{eqnarray*}
	where the second last inequality is due to the mathematical induction and $\tau_j$ is monotonically increasing. This completes the proof.
\end{proof}
{\bf Proof of Theorem \ref{Thm:iteration-complexity-uaa}:}
Recall that in the proof of Theorem \ref{Thm:base-case}, we have shown $\|\x^* - \bar \x_0\| \le D$. Then,
taking $\z=\x^*$ in \eqref{KeyInquality:F} yields that
\begin{eqnarray*}
	& & \frac{\Pi_{\ell=1}^{p+1}(j+\ell)}{(p+1)!} F(\bar{\x}_j) \\
	&\leq& \frac{\Pi_{\ell=1}^{p+1}(j+\ell)}{(p+1)!} F(\x^*) +  \frac{(p+1)\kappa_p + {\hat{\sigma}_1}}{p+1}\left\|\x^*-\x_0\right\|^{p+1} + \, \bar{\kappa}_p \left\|\x^*-\x_0\right\|^p  \nonumber \\
	&&+ \frac{{\tau_{j}}\left\|\x^* - \bar \x_0\right\|^{p+1}}{2(p+1)}+  (\kappa_\theta + {\hat{\sigma}_1}) (2D)^{p+1} \nonumber \\
	&\le& \frac{\Pi_{\ell=1}^{p+1}(j+\ell)}{(p+1)!} F(\x^*) +  \frac{(p+1)\kappa_p + {\hat{\sigma}_1}}{p+1}D^{p+1} + \, \bar{\kappa}_p D^p + \frac{{\tau_{j}} D^{p+1}}{2(p+1)} + (\kappa_\theta + {\hat{\sigma}_1}) (2D)^{p+1}.
\end{eqnarray*}
{According to Lemma \ref{Lemma:AAS-Auxillary}, $	\tau_j $ will not be increased once it exceeds $ \frac{2^p\left(\rho_p + \bar{\sigma}_2 + \kappa_{\theta}\right)^{p+1} p^{p-1}}{\eta^p (p-1)!}$. Therefore, we have $$\tau_j \le \max\left\{ \tau_0, \frac{2^p\gamma_3\left(\rho_p + \bar{\sigma}_2 + \kappa_{\theta}\right)^{p+1} p^{p-1}}{\eta^p (p-1)!} \right\} = \hat \sigma_2.$$
	Combining the two equalities above, it holds that
}
\[
F(\bar{\x}_j) - F(\x^*) \leq \frac{(p+1)! \left( \frac{2(p+1)\kappa_p + 2{\hat{\sigma}_1}+{\hat{\sigma}_2}}{2(p+1)}D^{p+1} + \, \bar{\kappa}_p D^p  + (\kappa_\theta + {\hat{\sigma}_1}) (2D)^{p+1} \right)}{\Pi_{\ell=1}^{p+1}(j+\ell)} .
\]
Combining this inequality with Lemmas~\ref{Lemma:Unified-SAS},~\ref{Lemma:Second-Order-AAS} and~\ref{Lemma:AAS-Auxillary} implies the conclusion.
\hfill $\Box$ \vskip 0.1cm
\end{document}